\titleformat{\subsection}[wrap]
	{\normalfont\fontseries{b}\selectfont\filright}
	{\S \thesubsection.}{.5em}{}
\theoremstyle{plain}
\newtheorem{theorem}{Theorem}[subsection]
\newtheorem*{theorem*}{Theorem}
\newtheorem*{condition*}{Condition}
\newtheorem*{definition*}{Definition}
\newtheorem*{corollary*}{Corollary}
\newtheorem{proposition}[theorem]{Proposition}
\newtheorem{lemma}[theorem]{Lemma}
\newtheorem{corollary}[theorem]{Corollary}
\newtheorem{definition}[theorem]{Definition}
\theoremstyle{definition}
\newtheorem{remark}[theorem]{Remark}
\newtheorem{example}[theorem]{Example}
    \mathchardef\mhyphen="2D  
    \newcommand{\var}{\rule{0.20cm}{0.20mm}}
    \newcommand{\op}{\ensuremath\mathtt{op}}
    \newcommand{\an}{\ensuremath\mathtt{an}}
    \newcommand{\dd}{\mathtt{d}}
    \newcommand{\qc}{\mathtt{qc}}
    \newcommand{\CSym}{\operatorname{\mathtt{CSym}}}
    \newcommand{\Sym}{\operatorname{\mathtt{Sym}}}
    \newcommand{\HT}{\mathtt{HT}}
    \newcommand{\cof}{\operatorname{{Cof}}}
    \newcommand{\fib}{\operatorname{{Fib}}}
    \newcommand{\RR}{{\operatorname{\mathit{R}}}}
    \newcommand{\LL}{{\operatorname{\mathit{L}}}}
    \newcommand{\Cot}{\mathrm{L}}
    \newcommand{\Cotan}{\mathrm{L}^\an}
    \newcommand\bb[1]{\mathbf{#1}}
    \newcommand{\sh}[1]{\mathcal{#1}}
    \newcommand{\shHom}{\underline{
        \scalebox{1.1}{\ensuremath{\mathtt{Hom}}}}}
    \newcommand{\wtilde}{\widetilde}
    \newcommand{\what}{\widehat}
    \newcommand{\HH}{\mathrm{H}}
    \newcommand{\Ext}{\mathrm{Ext}}
    \newcommand{\pt}{\mathtt{pt}}
    \DeclareMathOperator{\Exan}{Exan}
    \DeclareMathOperator{\Spec}{{Spec}}
    \DeclareMathOperator{\Spa}{{Spa}}
    \DeclareMathOperator{\Aut}{Aut}
    \DeclareMathOperator*{\colim}{colim}
    \DeclareMathOperator{\gr}{\mathtt{gr}}
    \DeclareMathOperator{\End}{{End}}
    \newcommand{\cont}{\mathtt{cont}}
    \newcommand{\et}{\mathtt{\acute{e}t}}
    \newcommand{\fet}{\mathtt{f\acute{e}t}}
    \newcommand{\proet}{\mathtt{pro\acute{e}t}}
    \newcommand{\profet}{\mathtt{prof\acute{e}t}}
    \newcommand{\qproet}{\mathtt{qpro\acute{e}t}}
    \newcommand{\Dol}{\mathtt{Dol}}
    \newcommand{\uni}{\mathtt{uni}}
    \newcommand{\luni}{\mathtt{l.uni}}
    \newcommand{\vv}{\mathtt{v}}
    \newcommand{\perf}{\mathtt{perf}}
    \newcommand{\ttt}{\mathtt{t}}
    \newcommand{\Ainf}{\bb{A}_{\mathtt{inf}}}
    \newcommand{\Binf}{\bb{B}_{\mathtt{inf}}}
    \newcommand{\Bdr}{\bb{B}_{\mathtt{dR}}}
    \DeclareMathOperator{\Hom}{Hom}
    \newcommand{\cat}[1]{\mathsf{#1}}
    \newcommand{\icat}[1]{\mathcal{#1}}
    \newcommand{\DHiggs}{\icat{H}\mathrm{iggs}}
    \newcommand{\Rings}{\cat{Rings}}
        \newcommand{\CAlg}{\cat{CAlg}}
    \newcommand{\Ch}{\cat{Ch}}
    \newcommand{\Set}{\cat{Set}}
    \newcommand{\Higgs}{\cat{Higgs}}
    \newcommand{\LocSys}{\cat{LocSys}}
    \newcommand{\ILocSys}{\cat{ILocSys}}
    \newcommand{\VB}{\cat{VB}}
    \newcommand{\Rep}{\cat{Rep}}
    \newcommand{\DPerf}{\operatorname{\mathtt{Perf}}}
    \newcommand{\Mod}[1]{{#1}\mhyphen\cat{Mod}}
    \newcommand{\isoto}{\cong}
    \newcommand{\reqv}{\xrightarrow{\sim}}
    \newcommand{\inj}{\hookrightarrow}
    \newcommand{\surj}{\twoheadrightarrow}
\newcommand{\Perf}{\cat{Perf}}
\newcommand{\Perfd}{\cat{Perfd}}
\title{\textsc{On the unipotent $p$-adic Simpson correspondence}}
\author{Thiago Solovera e Nery}
\begin{document}

\maketitle

\begin{abstract}
  \renewcommand*{\thefootnote}{\fnsymbol{footnote}}
  The goal of this paper is to show a (derived) $p$-adic Simpson correspondence
  for (locally) unipotent coefficients on smooth rigid-analytic
  varieties.  Our results depend on a deformation to $\Bdr^+/\xi^2$, and
  \emph{not} on a choice of exponential (as required for more general
  coefficients).  Our methods are inherently higher categorical, hinging on the
  theory of modules over $\bb{E}_\infty$-algebras.  This paper is a
  modification of my master thesis at the university of Bonn, defended on March
  2023.
\end{abstract}

\section*{Introduction}

The starting point of non-abelian $p$-adic Hodge theory was Deninger-Werner's
paper on parallel transport of vector bundles on $p$-adic varieties
\cite{deninger2005parallel}, followed by Falting's paper
\cite{faltings2005simpson}, on which, based on a similar result for complex
varieties established by Corlette, Donaldson, Hitchin and Simpson, a
corresponence is sketched between 
\[
    \{\text{Higgs Bundles on $X$}\} \leftrightarrow 
    \{\text{Generalized representations of $\pi_1(X,x)$} \}
\]
over some fixed pointed connected smooth proper curve. A similar correspondence
was also conjectured to hold for general proper, smooth varieties over some
$p$-adic local field, and a proof is also sketched for ``small'' objects in a
sense made precise in \textit{loc.~cit}.
These methods have been worked out on Raynaud's language of 
formal models, and further research has culminated in the treaty
\cite{abbes2016p} by Abbes, Gross and Tsuji.

Recently, the above correspondence has been studied under the light of
perfectoid spaces. In \cite{heuer2023correspondence}, the correspondence has
been proven for smooth and proper rigid spaces over some algebraically closed
non-archimedian field $C$. Such a decomposition also depends on a choice of
deformation of $X$ to $\Bdr^+/\xi^2$ and an exponential map. In
\cite{wang2023simpson} a similar correspondence is proven for small
coefficients, that does not depend on the choice of such exponential, but only
works in good reduction (conjecturally, it should also work if $X$ has
semistable reduction). 

Our main goal in this paper, is to prove a more special version of the small
Simpson correspondence, which holds even without any reduction hypothesis for 
arbitrary smooth rigid spaces over $C$. Recall that a Higgs bundle (resp. 
pro-\'etale vector bundle) is said to be  \emph{unipotent} if it is a
successive extension of the unit (\emph{cf}.~Def.~\ref{def::unipotent}). An object is
said to be locally unipotent if étale-locally on $X$ it is unipotent.

\begin{theorem*}
    [\ref{main_theorem}]
    Let $X$ be a smooth rigid-analytic space defined over a closed and complete
    $p$-adic field $C$ (or mixed characteristic perfectoid with all $p$-power
    roots of unity) endowed with a (flat) deformation to $\Bdr^+/\xi^2$.  Then
    there is an equivalence of symmetric monoidal abelian categories 
    \[
        \Higgs(X)^\luni \reqv \VB(X_\qproet)^\luni;.
    \]
    between pro-\'etale vector bundles and unipotent Higgs bundles on $X$.  
    A derived analogue of this statement also holds, and in particular
    this equivalence also preserve the cohomology groups of both sides.
\end{theorem*}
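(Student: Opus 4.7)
The plan is to first prove a derived enhancement of the statement---an equivalence of symmetric monoidal stable $\infty$-categories of ind-unipotent objects on the two sides---and then deduce the equivalence of abelian categories and the preservation of cohomology by passing to hearts. The conceptual engine is that, in a presentable stable symmetric monoidal $\infty$-category $\mathcal{C}$, the full subcategory of ind-unipotent objects is controlled by the $\mathbb{E}_\infty$-algebra $A_\mathcal{C} := \RR\End_\mathcal{C}(\un)$: by a bar/Koszul duality argument one expects an equivalence between ind-unipotent objects in $\mathcal{C}$ and a suitable completed category of $A_\mathcal{C}$-modules, symmetric monoidal with respect to the relative tensor product. If this formalism is set up uniformly, then identifying $A_\mathcal{C}$ canonically for the two $\mathcal{C}$ in question---and naturally in $X$---will yield the theorem.

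To compute these algebras, note that on the Higgs side the endomorphisms of $\un = (\mathcal{O}_X, 0)$ are computed by the Dolbeault complex with vanishing differential, producing the $\mathbb{E}_\infty$-algebra $\bigoplus_i \RR\Gamma(X, \Omega_X^i)[-i]$. On the pro-\'etale side, $\RR\End(\widehat{\mathcal{O}}_X) = \RR\Gamma(X_\qproet, \widehat{\mathcal{O}}_X)$, which by the Hodge--Tate decomposition is abstractly the same complex. The role of the deformation to $\Bdr^+/\xi^2$ is to upgrade this abstract isomorphism to a canonical equivalence of $\mathbb{E}_\infty$-algebras: the first-order lift produces a canonical splitting of the Hodge--Tate filtration on $\RR\Gamma(X_\qproet, \widehat{\mathcal{O}}_X)$, and one has to promote this splitting to a map of commutative algebras. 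I would package this as a naturality statement indexed over the site of smooth affinoid charts equipped with their induced deformations.

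Once the local equivalence is in place, globalization proceeds by \'etale hyperdescent. Both sides form stacks on $X_\et$, and one checks that the identification of $\mathbb{E}_\infty$-algebras is functorial under \'etale pullback, so that the local equivalences glue to a global one. The \emph{locally} unipotent objects, by definition sections of the associated stack of unipotent objects, then correspond to each other under this glueing. Restricting to the heart of the natural $t$-structure gives the abelian equivalence stated in the theorem, and cohomology preservation is automatic because on both sides $\Ext^i(\un, \un)$ is recovered from $A_\mathcal{C}$.

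I expect the principal obstacle to be controlling the multiplicative structure: producing the Hodge--Tate splitting as an honest $\mathbb{E}_\infty$-algebra map---rather than merely a map of underlying complexes or an $\mathbb{E}_1$-map---is subtle, and is precisely what forces the derived and higher-categorical perspective used throughout. A secondary difficulty is setting up the Koszul/bar-duality description of ind-unipotent objects cleanly enough to cover both sides simultaneously: the pro-\'etale unit $\widehat{\mathcal{O}}_X$ is not compact in a naive sense, so one must work with an appropriate completed or pro-nilpotent version of the module category in order to obtain an honestly symmetric monoidal equivalence.
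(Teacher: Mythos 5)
Your architecture matches the paper's: both sides get identified with (unipotent) modules over the derived endomorphism algebra of the unit, the two algebras are matched using the deformation to $\Bdr^+/\xi^2$ via a splitting of the Hodge--Tate filtration, and the locally unipotent statement is obtained by \'etale descent from compatible lifts of \'etale charts (the paper's Lemma \ref{lemma::etale_lift} gives the unique \'etale lifts over the square-zero extension, so the unipotent correspondence becomes an equivalence of prestacks on $X_\et$ whose sheafifications are the $\luni$ categories). Two remarks on where your route diverges. First, the paper works with the sheaf-level algebras $\sh{A}(X,\sh{O}_{X})=\bigoplus_i\wtilde\Omega^i_X[-i]$ and $\RR\nu_*\what{\sh{O}}_X$ on $X_\et$ rather than with their global sections, and it needs no bar/Koszul duality and no ind-completion: unipotent objects are by definition finite iterated extensions of the unit, hence perfect and dualizable, and one shows directly that the lax monoidal functors $\RR\nu_*$ and $\RR\shHom_{H_X}(\sh{O}_{X},-)$ are strong monoidal and fully faithful on the unipotent subcategories by a two-out-of-three argument on fiber sequences starting from the unit. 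This disposes of your ``secondary difficulty'' (non-compactness of $\what{\sh{O}}_X$) without any completion.

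The genuine gap sits exactly at what you name as the principal obstacle and then leave unresolved: promoting the Hodge--Tate splitting to a map of $\bb{E}_\infty$-algebras. This is the crux of the paper's argument, and your plan has no mechanism for it; a bar-duality formalism does not supply one. The resolution is that in characteristic zero the free $\bb{E}_\infty$-algebra on a shifted locally free module is formal, $\Sym (M[-1])\simeq\bigoplus_n\bigwedge^nM[-n]$, because the homotopy coinvariants of the sign representation of $\Sigma_n$ vanish over a $\bb{Q}$-algebra (Lemma \ref{lemma::coconnective_illusie}). Hence $\sh{A}(X,\sh{O}_{X})\simeq\Sym\wtilde\Omega^1_X[-1]$ is \emph{free}, and an $\bb{E}_\infty$-map $\Sym\wtilde\Omega^1_X[-1]\to\RR\nu_*\what{\sh{O}}_X$ requires only a map of complexes $\wtilde\Omega^1_X[-1]\to\RR\nu_*\what{\sh{O}}_X$, namely the splitting of $\tau^{\leq 1}\RR\nu_*\what{\sh{O}}_X$ furnished by the flat lift through the identification $\tau_{\leq 1}\Cotan_{X/\Bdr^+/\xi^2}(-1)[-1]\simeq\tau^{\leq 1}\RR\nu_*\what{\sh{O}}_X$. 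That this map is an equivalence is then checked on associated gradeds, where the local computation on tori shows that $\RR^1\nu_*\what{\sh{O}}_X$ generates $\gr_{\HT}\RR\nu_*\what{\sh{O}}_X$ under cup product. Without this freeness input (or a substitute for it) your proof does not close. A smaller caveat: the correct identification involves the Tate-twisted $\wtilde\Omega^i_X=\Omega^i_X(-i)$ rather than $\Omega^i_X$, which matters for the naturality in $X$ that your gluing step requires.
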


A couple of remarks are in order. Firstly, any rigid-analytic space which is
defined over a finite extension of $\bb{Q}_p$ will automatically deform
canonically since $\overline{K} \subset \Bdr^+$ with its direct limit topology.
Using spreading out techniques of Conrad and Gabber (see
\cite[Thm.~7.4.4]{haoyang2019ht} for a proof) one also shows that proper
rigid-analytic spaces over a closed complete field $C$ admit such deformations
(non-canonically).

Secondly, the derived version of such statements is no harder to prove then the
non-derived version, provided one has a workable definition of such objects. The
right hand side has a site-theoretic definition, but for the left hand side we
refer the reader to the main text. 

Finally, when proving the correspondence it is enough, by descent, to prove it
for unipotent objects and then glue. When $X$ is proper, the unipotent
correspondence becomes of a more homotopical nature (as the categories of Higgs
bundles and quasi-pro-\'etale vector bundles are not invariant under, say,
$\pi_1$-equivalences). We may then rewrite it in the following form.

\begin{corollary*}
    Let $X$ be a smooth, proper rigid-analytic space defined over a closed and
    complete $p$-adic field $C$ (or mixed characteristic perfectoid with all
    $p$-power roots of unity).  Fix a geometric point $\bar{x} \to X$ and consider
    its \'etale fundamental group $\pi_1(X,\bar{x})$, endowed with its
    profinite topology.  Then there is an equivalence of categories
    \[
        \Higgs(X)^\uni \isoto
        \Rep_C(\pi_1(X,\bar{x}))^\uni
    \]
    between unipotent continuous representations of $\pi_1$ on $C$-vector
    spaces and unipotent Higgs bundles on $X$.  This equivalence is canonical
    once fixed a lift of $X$ to $\Bdr^+/\xi^2$.
\end{corollary*}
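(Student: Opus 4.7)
The plan is to combine the main theorem with an independent identification of the right-hand side in terms of the fundamental group, using properness of $X$. By Theorem~\ref{main_theorem}, there is an exact symmetric monoidal equivalence $\Higgs(X)^\luni \reqv \VB(X_\qproet)^\luni$; because it is exact and preserves the monoidal unit, it restricts to the full subcategories of successive extensions of the unit, yielding $\Higgs(X)^\uni \reqv \VB(X_\qproet)^\uni$. It thus suffices to produce an equivalence $\VB(X_\qproet)^\uni \reqv \Rep_C(\pi_1(X,\bar{x}))^\uni$ under the additional properness hypothesis.

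To build this comparison, I would send a continuous finite-dimensional representation $\rho \colon \pi_1(X,\bar{x}) \to \GL(V)$ to the associated $\underline{C}$-local system $\mathcal{L}_\rho$ on $X_\qproet$, and then to the pro-\'etale vector bundle $\mathcal{L}_\rho \otimes_{\underline{C}} \widehat{\OO}_X$. This functor is exact, symmetric monoidal, and carries the trivial representation to the unit, so it restricts to unipotent subcategories on both sides. I would then argue it is an equivalence by induction on the length of the unipotent filtration: the length-one case reduces to $\Hom_{\VB(X_\qproet)}(\widehat{\OO}_X,\widehat{\OO}_X) = H^0(X_\qproet,\widehat{\OO}_X) = C$ (using that $X$ is proper and connected), and the inductive step reduces to matching $\Ext^1_{\Rep_C(\pi_1)}(C,C) = H^1_{\cont}(\pi_1(X,\bar{x}),C) = H^1_\et(X,C)$ with $\Ext^1_{\VB(X_\qproet)}(\widehat{\OO}_X,\widehat{\OO}_X) = H^1(X_\qproet,\widehat{\OO}_X)$. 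By Scholze's Hodge--Tate decomposition for proper smooth rigid spaces, both groups are canonically identified with $H^1(X,\OO_X) \oplus H^0(X,\Omega^1_X)(-1)$.

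The main obstacle is twofold: first, verifying that the constructed functor actually realizes the Hodge--Tate identification at the level of $\Ext^1$ between the unit and itself; and second, extending this comparison to non-trivial coefficients, i.e., identifying $\Ext^1_{\VB(X_\qproet)}(\widehat{\OO}_X, \mathcal{L}_\rho \otimes_{\underline{C}} \widehat{\OO}_X)$ with the corresponding $\Ext^1_{\Rep_C(\pi_1)}(C, \rho)$ for $\rho$ unipotent. Both should be accessible by dévissage along the unipotent filtration, reducing to the trivial-coefficient case via a five-lemma argument applied to the long exact sequences of $\Ext$ groups, though care is needed to ensure naturality of the identification with respect to the filtration. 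Once these extension groups are matched, essential surjectivity and fully faithfulness on unipotent objects follow by a standard induction on length.
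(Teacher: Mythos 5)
Your proposal is correct and follows essentially the same route as the paper: Theorem~\ref{main_theorem} supplies $\Higgs(X)^\uni \reqv \VB(X_\qproet)^\uni$ (the paper states the unipotent case directly, so no restriction from locally unipotent objects is needed), and the comparison $\Rep_C(\pi_1(X,\bar{x}))^\uni \reqv \VB(X_\qproet)^\uni$ for proper $X$ is exactly Theorem~\ref{prop::uni_loc_sys}, proved there with the same functor $\rho \mapsto \mathcal{L}_\rho \otimes_{\underline{C}} \what{\sh{O}}_X$ and the same d\'evissage along the unipotent filtration. The two ``obstacles'' you flag are real but are resolved by naming the right inputs rather than appealing to the Hodge--Tate decomposition. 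For the base case, an abstract identification of $\Ext^1_{\Rep_C(\pi_1)}(C,C)$ and $\Ext^1_{\VB(X_\qproet)}(\what{\sh{O}}_X,\what{\sh{O}}_X)$ with $\HH^1(X,\sh{O}_X)\oplus \HH^0(X,\wtilde\Omega^1_X)$ is not what the induction needs; what is needed is that the \emph{natural} map $\RR\Gamma_\et(X,\underline{C}) \to \RR\Gamma_\qproet(X,\what{\sh{O}}_X)$ induced by $\underline{C}\to\what{\sh{O}}_X$ is an equivalence, and this is precisely the primitive comparison theorem (Theorem~\ref{thm::primitive_comparison}); with it, the extension to nontrivial unipotent coefficients is the five-lemma argument you describe, as in the proof of Theorem~\ref{prop::uni_loc_sys}. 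Separately, the identification $\HH^1_\cont(\pi_1(X,\bar{x}),C)\cong \HH^1_\et(X,C)$ that you assert is not formal (the paper explicitly warns that $X$ need not be a $K(\pi,1)$ for $p$-adic coefficients): it is obtained from the \v{C}ech-to-sheaf spectral sequence for the universal pro-finite-\'etale cover $\wtilde{X}\to X$ together with the vanishing $\HH^1(\wtilde{X},C)=0$ of Proposition~\ref{prop::univ_cover}, as carried out in Proposition~\ref{prop::local_sytems}; without that input one only gets an injection of group cohomology into \'etale cohomology, which would leave essential surjectivity of your functor unproved.
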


\subsection*{Strategy of proof}

To prove the unipotent correspondence we reinterpret both sides as modules over
an approriate $\bb{E}_\infty$-algebra, in the sense of \cite{HA}. In simple
terms, this is an object in a derived category of a (sheaf of) rings which
admits an algebra structure whose addition and multiplication laws hold only up
to a coherent homotopy.

We now introduce the derived variants of the categories in question. The
category of vector bundles on the (quasi-)pro-étale site of $X$ is replaced by
the stable infinity category $\DPerf(X_\qproet)$ of perfect
$\what{\sh{O}}_X$-modules (in the site-theoretic sense); for Higgs bundles the
situation is a bit more delicate and we refer to section
\ref{sec::derived_higgs} for the definition of $\DHiggs(X)$.

In \cite{HA}, one constructs symmetric monoidal stable infinity categories
$\Mod{R}$ of modules any $\bb{E}_\infty$-algebra $R$. If $R$ is a ordinary
commutative ring, then this yields the usual enhancement of the derived
category $D(R)$ of $R$. To prove our main theorem, we establish a diagram
\begin{equation*}
\begin{tikzcd}
    \Higgs(X)^\uni \ar[r, "\sim"] \ar[d, hook] & \VB(X_\qproet)^\uni \ar[d, hook] \\
    \Mod{\Sym \wtilde\Omega^1[-1] } \ar[r, "\sim"] & \Mod{\RR\nu_* \what{\sh{O}}_X};
\end{tikzcd}
\end{equation*}
where the vertical inclusions (Prop.~\ref{prop::bundles_and_modules}, Prop.
\ref{prop::Higgs_A_mod} ) are natural inclusions and the lower horizontal arrow
is induced by an isomorphism 
\[
    \Psi \colon \Sym \wtilde\Omega^1[-1] \reqv \RR\nu_* \what{\sh{O}}_X,
\]
which can be interpreted as a more refined version of the Hodge-Tate
decomposition (\emph{cf}.~Sec.~3), and is equivalent to deforming $X$ to
$\Bdr^+/\xi^2$.  The top horizontal arrow will exist and be an equivalence by
purely categorical reasons. The derived statement is proven with the same
argument.

The object $\Sym \wtilde\Omega^1[-1]$, as the name suggests, is the free
$\bb{E}_\infty$-algebra on the object $\Omega^1[-1]$. It is particularly well
behaved in our setting because we are working over $\bb{Q}_p$, which is of 
characteristic zero (Cor.~\ref{cor::A(X)_sym}). In particular we need not 
worry about the distinction between the different versions of $\Sym$. 

The object $\RR\nu_* \what{\sh{O}}_X$ is the derived pushforward of the unit
$\what{\sh{O}}_X$ of $\sh{D}(X_\qproet)$ to $\sh{D}(X_\et)$. The importance of
the projection $\nu \colon X_\qproet \to X_\et$ to $p$-adic Hodge theory was
one of the main points of \cite{scholze2013adic} who used the Leray spectral
sequenece associated to this morphism to deduce the Hodge-Tate decomposition.
The pushforward is an $\bb{E}_\infty$-algebras for essentially formal reasons:
$\RR\nu_*$ is lax monoidal because it has a symmetric monoidal left adjoint.

We remark that the isomorphism $\Psi$ can be deduced from any form of the Simpson
correspondence that preserves the derived structure (or even the Dolbeault and 
quasi-pro-étale cohomologies as objects in the derived category of abelian groups)
and the symmetric monoidal structure, for essentially formal reasons. 

Also as explained in the introduction we are able to extract the argument to
its limits using descent and also deduce a correspondence for locally derived
unipotent objects in each side. These include all nilpotent Higgs bundles
(\emph{cf}.~Section \ref{subsection::loc_unipotent_higgs}).

\subsection*{Relation with other works}
We also mention that the above theorem has many intersections with the recent
developements of the subject. In \cite{wang2023simpson} and
\cite{anschutz-heuer-lebras2023small}, we have a correspondence for small
objects which is more general and depends also only on a choice of deformation.
However, both papers only deal with good reduction case, so our proof is more
general.  We also point out Tsuji's theorem \cite[IV.3.4.16]{abbes2016p} which,
as explained in the introduction of the chapter, works for varieties of
semistable reduction. However, in all cases above, our proof is fundamentally
different, and, in the author's opinion, simpler.

\subsection*{Notations and conventions}

We fix once and for all a prime $p$, a non-archemidean (complete)
field $K$ of mixed characteristic and algebraic closure $C$. 
We use freely the language of adic spaces, 
and accordingly a \emph{rigid variety/space} 
over a non archemidean field
$K$ is an adic space $X$ over $\Spa(K,K^\circ)$ which is 
locally of topologically finite type.

We use blackbold letters to denote our special rings such as 
the $p$-adic integers $\bb{Z}_p$.
As usual, $\bb{C}_p$ denotes (a choice of) the 
complete algebraic closure of $\bb{Q}_p$.

We also freely use the language of $\infty$-categories
in the sense of quasi-categories of Joyal-Lurie;
in particular all of our derived categories are therefore considered
as stable $\infty$-categories.
Hopefully the non-expert can take this infinite-categorical
machinery as a blackbox without much effort.
We highlight that a theory of higher morphisms is necessary
in order to have a good theory of $\bb{E}_\infty$-rings.

The free $\bb{E}_\infty$ algebra on some complex $\sh{M}$ will be
denoted by $\Sym M$.
The free (ordinary) commutative algebra on a module $M$
will be denoted $\CSym M$.
Since we are in characteristic zero one could identify
$\Sym M[0] = \CSym M$, but we will keep the notational difference
for clarity.

Our gradings follow the following convention.
Indices on the bottom follows homological conventions,
and indices on top follow cohomological ones.
They are related via $C_n = C^{-n}$.
The cohomology of a complex is denoted by $\sh{H}^n$
to possibly distinguish it from its sheaf (hyper)cohomology.

Given a rigid-analytic variety $X$ over $K$, we denote by
$\Omega^1_X = \Omega^1_{X/K}$ the sheaf of completed differentials
on $X$ (see appendix).
We denote the tate twists by 
$\wtilde\Omega^1_X = \Omega^1_X (-1)$ and similarly for
$n \in \bb{N}$ we have
$\wtilde\Omega^n_X = \bigwedge^n(\wtilde\Omega^1_X)$
and
$\wtilde T_X = \shHom(\wtilde\Omega^1_X, \sh{O}_{X}) = T_X(1)$
etc.

\subsection*{Aknowledgements}

I would like to thank first and foremost my advisor Peter Scholze for
suggesting me this wonderful topic, giving me insight of the subject and its
history, and not least in his help of making this document more readable and 
precise.

I would also like to thank Johannes Anschütz, for many hours of discussion and
also helping me with the draft, as well as the insight of how to define derived
Higgs bundles and derived unipotence.

On top of this, Ben Heuer and Haoyang Guo, whose endless conversations have
completely shaped and reshaped the way I think about rigid geometry and
$p$-adic non abelian Hodge theory.  Furthermore, the section relating unipotent
and nilpotent Higgs bundles was motivated by a question from Heuer.  I would
also like to mention Mitya Kubrak, who taught me the characteristic $p$
correspondence of Ogus-Vologodsky, and helped me understand Higgs bundles.

And of course, I would not have made this far without my friends and family's
support.  Especially due to Covid-19 and its complications, which I've found
(as most around me) to be completely overwhelming.  A special thanks to Tanya,
who gave me her old computer, as otherwise writing this in \LaTeX would be much
impractical.  I would also like to thank Konrad Zou, Ruth Wild, Thiago Landim, Gabriel
Ribeiro and Gabriel Bassan for their help listening to me talk about the topic. 

\tableofcontents

\section{Pro-\'etale and $\vv$-vector bundles}
In this section, we review the basic properties of the pro-\'etale and
$\vv$-topologies associated to a rigid analytic variety.
We define the main objects we are interested in: quasi-pro-\'etale
vector bundles (or, equivalently, $\vv$-bundles).  We then compare this notion
to $C$-local systems and representations of the fundamental group, and show
that they agree on unipotent objects when $X$ is proper.

In the appendix we recall notions of diamonds and perfectoid spaces
that will be useful in the following.

\subsection{Pro-\'etale and $\vv$ vector bundles; local systems}
Any ringed topos comes with a theory of vector bundles and a bounded perfect
derived category (see eg.~[Stacks, 08G4]).  For the
quasi-pro-\'etale/$\vv$-topology, this yields one of the sides of the $p$-adic
Simpson's correspondence.  These objects are related to local systems of
$\underline{C}$-modules, where $\underline{C}$ is the sheaf of continuous maps 
into $C$ as defined in the end of the last section.
Furthermore, there is also a relation with continuous $C$ representation of the
\'etale fundamental group.  We start by stating a result that guarantees we
don't need to care about the difference between the difference between the
$\vv$ and the $\qproet$ sites.

\begin{theorem}
    \label{thm::qproet_vv_bundles}
    Let $X$ be an analytic adic space over $\Spa \bb{Z}_p$, or more generally a
    diamond.  Then pullback along $\lambda \colon X_\vv \to X_\qproet$ induces
    equivalences of categories
    \[
        \VB(X_\qproet) \reqv \VB(X_\vv),  \quad
        \DPerf(X_\qproet) \reqv \DPerf(X_\vv).
    \]
\end{theorem}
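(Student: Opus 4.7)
My plan is to prove both equivalences by a local-to-global descent argument, reducing to the case where $X$ is affinoid perfectoid, where both sides identify with the classical category of finite projective modules (resp.\ perfect complexes) over the global sections of the completed structure sheaf.

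First I would use that, since $X$ is (the diamond attached to) an analytic adic space over $\Spa \bb{Z}_p$, the site $X_\qproet$ admits a basis $\sh{B}$ consisting of maps $Y \to X$ with $Y$ an affinoid perfectoid space; this is essentially built into the definition of a diamond. Since every $\qproet$-cover is \emph{a fortiori} a $\vv$-cover, the same collection $\sh{B}$ is a basis of $X_\vv$. The presheaves $Y \mapsto \VB(Y_\qproet)$, $Y \mapsto \VB(Y_\vv)$, and likewise their derived analogues $\DPerf$, are stacks on $X_\qproet$: indeed, vector bundles and perfect complexes satisfy $\vv$-descent on perfectoid spaces (Scholze, Kedlaya--Liu).

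Second, I would identify both categories on the basis $\sh{B}$. For $Y = \Spa(R,R^+)$ affinoid perfectoid, the cited $\vv$-descent results give an equivalence between $\VB(Y_\vv)$ and the category of finite projective $R$-modules, and likewise between $\DPerf(Y_\vv)$ and perfect complexes over $R$. The same identification holds for the $\qproet$-site, because the $\qproet$-topology is finer than the analytic one (where vector bundles are already the classical ones) and coarser than the $\vv$-topology (which gives the same answer). Under these identifications, $\lambda^*$ corresponds to the identity functor, so both asserted equivalences hold on $\sh{B}$.

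Third, since $\VB(-_\qproet), \VB(-_\vv), \DPerf(-_\qproet), \DPerf(-_\vv)$ are all stacks on $X_\qproet$, and $\lambda^*$ induces an equivalence on the basis $\sh{B}$, the comparison is an equivalence globally. The only non-formal ingredient is the $\vv$-descent for vector bundles and perfect complexes on affinoid perfectoid spaces used in the second step; this is the hard part, but it is already established in the literature and here it is used as a black box.
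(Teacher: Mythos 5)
Your argument is essentially the paper's own proof, just spelled out in more detail: both reduce to affinoid perfectoids by descent (the sites being locally perfectoid) and then invoke the known identification of $\vv$-bundles, respectively $\vv$-perfect complexes, on an affinoid perfectoid $\Spa(R,R^+)$ with finite projective $R$-modules, respectively perfect complexes over $R$, as a black box from the literature. The sandwiching of the quasi-pro-\'etale topology between the analytic and $\vv$-topologies to get the same answer on the basis is correct and is exactly the content of the cited results.
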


\begin{remark}
    Even if one only cares about non-derived objects, 
    the derived result is still important, as it implies that
    if $M \in \icat{D}^b_\perf(X_\vv)$, then 
    $R\Gamma_{\qproet}(X, M) \reqv R\Gamma_\vv (X,\lambda^* M)$.
\end{remark}

\begin{proof}
    Both sides are locally perfectoid, so this theorem reduces
    to a computation on affinoid perfectoids.
    For the vector bundle case, this was handled in
    \cite[Lemma~17.1.8]{scholze2020berkeley}.
    For perfect objects, the proof is more difficult, 
    and it was done in
    \cite[Thm.~2.1]{anschutz-lebras2021fourier}.
\end{proof}

We now explain the relation between these vector bundles, $C$-local systems and
representations of the fundamental group. 

From now on we work over a complete algebraically closed non-archimedean perfectoid
    field $(C, \sh{O}_C)$.

Consider a connected rigid-analytic
variety $X$ over $C$ and fix a geometric point $\bar{x} \to X$.  We define the
category (with the usual morphisms)
\begin{alignat*}{4}
    \Rep_{C}(\pi_1(X,\bar{x})) = 
    \Bigg\{\begin{array}{@{}c@{}l}\text{ finite dim.\ continuous $C$-linear}\\
    \text{representations of $\pi_1(X,\bar{x})$} \end{array}\Bigg\}.
\end{alignat*}
This symmetric monoidal abelian category can be identified with the category of
local systems on the classifying stack of the fundamental group.

\begin{definition}
    Let $G$ be a profinite group and $X$ a perfectoid space. A $G$-torsor
    on $X$ is a perfectoid space $Y / X$ with a $\underline{G}_X$-action 
    which is $\proet$-locally trivial. We let $BG$ denote the
    pro-étale stack of $G$-torsors on $\Perf$.
\end{definition}

\begin{lemma}
    Let $X$ be a (locally spatial) diamond. Then the groupoid of morphisms 
    $X \to BG$ is equivalent to the groupoid of $G$-torsors on $X$, that
    is, the groupoid of (locally spatial) diamonds $Y/X$ with a $\underline{G}_X$
    action which is quasi-pro-étale-locally trivial.
\end{lemma}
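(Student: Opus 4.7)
The plan is to reduce to the perfectoid case by choosing a well-behaved cover, and then transport the statement along descent. Since $X$ is a locally spatial diamond, there exists a quasi-pro-étale surjection $\wtilde{X} \to X$ where $\wtilde{X}$ is a (disjoint union of) affinoid perfectoid space(s); let $\wtilde{X}^\bullet$ denote its Čech nerve over $X$, which is levelwise perfectoid. On any perfectoid space, the pro-étale and quasi-pro-étale topologies agree for the purposes of defining $G$-torsors and their local triviality, so no confusion arises on the nerve.

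Next I would interpret the two sides via descent. Because $BG$ is defined as a pro-étale stack on $\Perf$, it is in particular a sheaf of groupoids for the quasi-pro-étale topology on $\Perf$; pulling back along $\wtilde{X}^\bullet \to X$, the Yoneda lemma plus qpét-descent gives
\[
    \mathrm{Hom}(X, BG) \;\eqv\; \lim\bigl(BG(\wtilde{X}^\bullet)\bigr).
\]
On each perfectoid term $\wtilde{X}^n$, the groupoid $BG(\wtilde{X}^n)$ is, essentially by the defining universal property of $BG$, the groupoid of $G$-torsors on $\wtilde{X}^n$ (in the sense of the previous definition). Thus a map $X \to BG$ is exactly a $G$-torsor $\wtilde{Y} \to \wtilde{X}$ together with a cocycle-compatible isomorphism over $\wtilde{X} \times_X \wtilde{X}$.

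Finally, I would produce from such descent data a $G$-torsor $Y \to X$ in the sense of the statement, and check the equivalence is strict. In one direction, given $(\wtilde{Y}, \text{cocycle})$, one forms $Y$ as the quotient diamond (i.e., the coequalizer of $\wtilde{Y}\times_X \wtilde{X} \rightrightarrows \wtilde{Y}$) in the category of locally spatial diamonds; the $\underline{G}_X$-action and qpét-local triviality are transported directly from $\wtilde{Y}$. In the other direction, a qpét-locally trivial $G$-torsor $Y \to X$ pulls back to a torsor $\wtilde{Y} = Y \times_X \wtilde{X}$ on $\wtilde{X}$ which, after refining the cover further if needed, becomes proét-trivial on the perfectoid side, and hence classifies a map $\wtilde{X} \to BG$ together with tautological descent data.

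The main obstacle here is the effectivity of descent: to guarantee that the quotient $Y = \wtilde{Y}/\sim$ exists as a \emph{locally spatial} diamond rather than merely as a quasi-pro-étale sheaf. This is precisely the content of effective descent for locally spatial diamonds along quasi-pro-étale covers, which is a standard input from Scholze's theory of diamonds; once granted, the remaining verifications (compatibility of $G$-actions, functoriality of the construction in morphisms of torsors, independence of the chosen cover up to refinement) are formal and only involve unwinding the definitions.
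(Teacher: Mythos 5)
Your proposal is correct and follows essentially the same route as the paper: the paper writes $X = X^P/R$ as a quotient of a perfectoid by a pro-\'etale equivalence relation and identifies maps $X \to BG$ with relation-compatible torsors on $X^P$, which is exactly your \v{C}ech-nerve descent argument in different notation, with the same key input (effectivity of the descent data, using that the glued sheaf admits a quasi-pro-\'etale surjection from a perfectoid and is hence a diamond, locally spatial since torsors are quasi-pro-\'etale covers). No substantive difference.
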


\begin{proof}
    Write $X$ as a quotient by a perfectoid equivalence relation $X^P/R$.
    A map into $BG$ is the same as a map from the perfectoid $X^P \to BG$
    which respects the relation. This is then equivalent to constructing
    a $G$-torsor $Y/X$. Conversely, any such torsor defines a torsor on 
    $X^P$ respecting $R$, hence defines a map into $BG$. If $X$ is 
    (locally) spatial then so is any $G$-torsor over $X$, since 
    $Y \to X$ is a quasi-pro-étale cover.
\end{proof}

Now we can define a quasi-pro-étale site of $BG$. We say that a morphism 
of pro-étale stacks $X \to BG$ is quasi-pro-étale if it is locally separated and
the pullback to $S \to BG$ is pro-étale for all strictly totally disconnected $S$.
The quasi-pro-étale site $BG_\qproet$ is the site of those stacks quasi-pro-étale
over $BG$ with $\vv$-covers.

As usual, there is a map  $\pt \to BG$ which corresponds on $S$ points to 
the trivial torsor (Here $\pt = \Spa(C, \sh{O}_C)$ is the final object). 
Given a diamond $X$ and a morphism $X \to BG$ classifying a torsor $P$ the diagram
\[
\begin{tikzcd}
    P \ar[r] \ar[d] & \pt \ar[d] \\
    X \ar[r] & BG
\end{tikzcd}
\]
is cartesian. Hence $\pt \to BG$ is quasi-pro-étale in the sense above and also
surjective since $P \to X$ is always surjective
\autocite[Lemma.~10.13]{scholze2017diamonds}.  

\begin{proposition}
    Let $G$ be a profinite group. Then there is a canonical equivalence of 
    symmetric monoidal categories
    \[
        \LocSys(BG_\qproet, C) \reqv \Rep_C(G), \quad
        \DPerf(BG_\qproet, C) \reqv \DPerf(G)
    \]
    given by the pullback to $\pt \to G$.
\end{proposition}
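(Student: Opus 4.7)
The natural strategy is $\qproet$-descent along the surjective cover $\pt \to BG$ established just above. First I would compute its Čech nerve: a $G$-torsor on $\pt$ together with two trivialisations is the same data as the element of $G$ relating them, and this identification is continuous in families, so
\[
    \underbrace{\pt \times_{BG} \cdots \times_{BG} \pt}_{n+1\ \text{factors}}
    \;\eqv\; \underline{G}^{n}.
\]
Invoking $\vv$-descent for vector bundles (and for perfect complexes, via Theorem~\ref{thm::qproet_vv_bundles} together with $\vv$-descent of $\DPerf$ on diamonds), both $\VB(BG_\qproet)$ and $\DPerf(BG_\qproet, C)$ become totalisations
\[
    \VB(BG_\qproet)\;\eqv\; \lim_{[n] \in \Delta}\, \VB(\underline{G}^n),
\]
and similarly for $\DPerf$; pullback along $\pt \to BG$ extracts the degree-zero term, giving the functor whose essential surjectivity and fully faithfulness are to be proved.

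The core identification is that, for a rank-$r$ bundle pulled back from $\pt$ to $\underline{G}^n$, the automorphism sheaf is $\underline{\GL_r(C)}$, whose $\underline{G}^n$-sections are $\mathrm{Cont}(G^n, \GL_r(C))$. Granting this, unwinding the cosimplicial totalisation produces exactly the data of a finite-dimensional $C$-vector space $V$ together with a continuous $1$-cocycle $G \to \GL(V)$ satisfying the cocycle identity on $G^2$; such a cocycle is a continuous representation on $V$, and the symmetric-monoidal structure inherited from tensor product on each $\underline{G}^n$ matches the usual one on $\Rep_C(G)$. The perfect-complex statement is obtained by running the same machinery with $\DPerf(\pt) \eqv \DPerf(C)$ in place of $\VB(\pt)$, where the descent datum becomes a continuous $G$-action on a perfect complex of $C$-vector spaces.

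The main obstacle will be the ``continuous functions'' computation: identifying $\qproet$-morphisms $\underline{G}^n \to \underline{\GL_r(C)}$ with $\mathrm{Cont}(G^n, \GL_r(C))$. This reduces to showing $\Gamma(\underline{G}^n, \sh{O}) = \mathrm{Cont}(G^n, C)$, which follows because $\underline{G}^n$ is a cofiltered limit of finite disjoint unions of copies of $\pt = \Spa(C, \sh{O}_C)$ and completed global sections convert this limit into continuous functions on $G^n$ valued in $C$; combining with $\underline{M_r(C)} = \underline{C}^{\oplus r^2}$ and passage to units yields the desired formula. Once this computation is in hand, checking compatibility of the cocycle condition with the simplicial face maps is routine bookkeeping, and symmetric monoidality is preserved step by step since each term in the Čech nerve carries the usual tensor product.
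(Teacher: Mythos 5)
Your proof is correct and follows essentially the same route as the paper: both rest on descent along the quasi-pro-\'etale cover $\pt \to BG$, whose \v{C}ech nerve is $\underline{G}^\bullet$. The paper packages this once and for all at the level of the whole topos (sheaves on $BG_\qproet$ are $G$-equivariant sheaves on the slice over $\pt$, i.e.\ condensed sets with a continuous $G$-action) and declares the rest formal, whereas you unwind the cosimplicial totalisation explicitly for $\VB$ and $\DPerf$ --- which is exactly the content hidden in the paper's ``the result follows formally''.
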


\begin{proof}
    Sheaves on any site descend along slice topoi. We conclude that
sheaves on $BG_\qproet$ are the same as $G$-equivariant sheaves on $(BG_\qproet)_{/\pt}$,
that is, condensed sets with a continuous $G$-action. The result follows formally.
\end{proof}

\begin{remark}
    Seeing $G$ as a group object in the topos of condensed sets (ignoring cardinal issues)
    then $BG_\qproet$ is identified with the classifying topos of $G$ [SGAIV-IV.2.4] 
    by the proposition above.
\end{remark}

\begin{remark}
    Again, the derived result is important even if one only cares about objects 
    concentrated in degree zero. It implies the cohomological comparison for finite
    dimensional $C$-representations
\[
    \RR\Gamma_\cont(G,M) \isoto \RR\Gamma(BG, \underline{M})
\]
    where the left hand side is defined as the continuous cohomology of $M$ as 
    computed inside the world of condensed sets (which agrees with the classical
    formula).
\end{remark}

Another related object are $C$-local systems.  A $C$-local system then is a
$\underline{C}$-module which is quasi-pro-\'etale locally free of finite rank.
Here $\underline{C}$ is the quasi-pro-\'etale sheaf defined before the
statement of the primitive comparison theorem (Theorem
\ref{thm::primitive_comparison}).  We claim there are functors
\[
    \Rep_{C}(\pi_1(X,\bar{x})) \to 
    \LocSys(C) \to
    \VB(X_\qproet),
\]
relating these objects.
To understand these we introduce the pro-\'etale version
of the universal covering space in topology.

\begin{definition}
    [The universal pro-finite-\'etale cover]
    Let $X$ be a connected rigid-analytic variety over $C$, and let $\bar{x}
    \to X$ be a geometric point.  We define the \emph{universal
    pro-finite-\'etale cover} of $X$ to be the limit
    \[
        \wtilde{X} = \lim_{X' \to X} X', \qquad
        X'(C) \ni \bar{x}' \mapsto \bar{x},
    \]
    of all \emph{connected}, pointed, finite \'etale covers $(X',\bar{x}') $
    over $ (X,\bar{x})$.  This limit is taken inside the category of sheaves on
    $\Perf$, and is a locally spatial diamond.
\end{definition}

Almost by definition, the map $\wtilde{X} \to X$ is a quasi-pro-\'etale (more
precisely pro-finite-\'etale) cover of $X$.  It is a torsor under
$\pi_1(X,\bar{x})$, every \'etale cover of $\wtilde{X}$ splits, and any pointed
pro-finite-\'etale cover $(X',\bar{x}') \to (X,\bar{x})$ receives a unique
basepoint preserving map $\wtilde{X} \to X'$ (which is automatic
pro-finite-\'etale).  Before defining the aforementioned functors, we state
some important properties of this covering space.

\begin{proposition}
    \label{prop::univ_cover}
    Let $X$ be a connected, qcqs, pointed, rigid-analytic variety over $\Spa
    C$.  Then $\widetilde{X}$ satisfies the conditions 
    \[
        \HH^0(\wtilde X, C) = C, \quad
        \HH^1(\wtilde X, C) = 0.
    \]
    If $X$ is also proper, then $\HH^0(\wtilde X, \sh{O}_{X}) = C$.
\end{proposition}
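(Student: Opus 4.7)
The plan is to express cohomology on $\wtilde X$ as a filtered colimit of cohomologies over the finite étale covers $X'$ appearing in the tower defining $\wtilde X$, and then exploit the universal property of $\wtilde X$ (every connected pointed finite étale cover is dominated by $\wtilde X$) to kill classes in degree one. The key foundational ingredient is that cohomology of qcqs diamonds commutes with cofiltered limits of qcqs diamonds along qcqs transition maps, so for a sheaf $F$ pulled back from $X$ one has
\[
    \HH^i(\wtilde X, F) = \varinjlim_{X'} \HH^i(X', F|_{X'}),
\]
where the colimit runs over the connected pointed finite étale $(X',\bar x') \to (X,\bar x)$ indexing the tower; this is a standard approximation result in the theory of diamonds, e.g.~from \cite{scholze2017diamonds}.

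For $\HH^0(\wtilde X, \underline C) = C$, each $X'$ is connected by construction, so $\HH^0(X', \underline C) = C$ and all transition maps are the identity on $C$; the colimit is $C$. The structure sheaf case in the proper setting is analogous: each finite étale cover $X'$ of a proper $X$ is again proper and connected, so $\HH^0(X'_\et, \sh{O}_{X'}) = C$ by rigid-analytic properness, and the comparison with the completed structure sheaf on the quasi-pro-\'etale site follows from the pro-\'etale cohomology results of Scholze for proper rigid spaces. The colimit over the tower is again $C$.

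For the vanishing $\HH^1(\wtilde X, \underline C) = 0$, I would run a dévissage down to $\underline{\mathbb F_p}$-coefficients. An $\mathbb F_p$-torsor on $X'$ is a finite étale cover with free $\mathbb F_p$-action; each of its connected pointed components is a connected pointed finite étale cover of $X'$ and is therefore dominated by $\wtilde X$ via the universal property, so the pullback of the torsor to $\wtilde X$ splits. Hence $\HH^1(\wtilde X, \underline{\mathbb F_p}) = \varinjlim_{X'}\HH^1(X',\underline{\mathbb F_p}) = 0$. Inducting along the short exact sequences $0 \to \underline{\mathbb F_p} \to \underline{\mathbb Z/p^n} \to \underline{\mathbb Z/p^{n-1}} \to 0$ yields $\HH^1(\wtilde X, \underline{\mathbb Z/p^n}) = 0$ for all $n$, passing to the limit gives $\HH^1(\wtilde X, \underline{\mathbb Z_p}) = 0$, and inverting $p$ and extending scalars to $C$ finally gives $\HH^1(\wtilde X, \underline C) = 0$.

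The main obstacle is the final dévissage from torsion coefficients to $\underline C$: the passage $\underline{\mathbb Z_p} = \varprojlim_n \underline{\mathbb Z/p^n}$ produces an $R^1\varprojlim$ correction term on pro-\'etale cohomology, which must be controlled (the fact that the groups already vanish for all $n$ makes this manageable via Mittag-Leffler), and one must verify that the presentation of $\underline C$ in the quasi-pro-\'etale topos is compatible with inverting $p$ inside $\HH^1$. Once the $\mathbb F_p$ vanishing is established by the universal property of $\wtilde X$, however, these remaining steps should be essentially formal.
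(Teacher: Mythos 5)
Your overall strategy --- split torsors on the inverse limit using the universal property of $\wtilde X$, then d\'evissage up to the coefficients you actually want --- is the same as the paper's, and the $\HH^0$ computations and the torsion-coefficient part of the $\HH^1$ argument are fine. The genuine gap is the last link of your d\'evissage chain: passing from $\underline{\bb{Z}_p}$ (or $\underline{\bb{Q}_p}$) to $\underline{C}$ by ``extending scalars.'' The sheaf $\underline{C}$ is the sheaf of \emph{continuous} $C$-valued functions, i.e.\ $\bigl(\RR\lim_n \underline{\sh{O}_C/\varpi^n}\bigr)[1/p]$; on a profinite object this is a \emph{completed} tensor product over $\underline{\bb{Q}_p}$, not the algebraic one (a continuous map from a profinite set to $\sh{O}_C$ need not locally factor through a finite-dimensional $\bb{Q}_p$-subspace). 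Cohomology commutes with the filtered colimit computing an algebraic base change on the qcqs $\wtilde X$, but not with the completion, so $\HH^1(\wtilde X,\underline{\bb{Q}_p})=0$ does not formally yield $\HH^1(\wtilde X,\underline{C})=0$. This is precisely the step you deferred as ``compatibility with inverting $p$,'' but the obstruction is not the localization at $p$ (which is a filtered colimit and harmless); it is the jump from $\bb{Z}_p$-coefficients to $\sh{O}_C$-coefficients.

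The paper's route repairs exactly this: it runs your torsor-splitting argument directly for $\underline{\sh{O}_C/\varpi^n}$. Since $\sh{O}_C/\varpi^n$ is a discrete, ind-finite module, one writes it as a filtered colimit of finite constant sheaves, commutes $\HH^1(\wtilde X,-)$ with that colimit (qcqs), and observes that torsors under finite constant sheaves are finite \'etale covers, hence split on $\wtilde X$; this gives $\HH^1(\wtilde X,\sh{O}_C/\varpi^n)=0$ for every $n$. Then repleteness of $X_\qproet$ gives $\underline{\sh{O}_C}=\RR\lim_n \underline{\sh{O}_C/\varpi^n}$, and the Milnor sequence
\[
0 \to \RR^1\lim_n \HH^0(\wtilde X,\sh{O}_C/\varpi^n) \to \HH^1(\wtilde X,\sh{O}_C) \to \lim_n \HH^1(\wtilde X,\sh{O}_C/\varpi^n)\to 0
\]
finishes the argument after inverting $p$. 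Note also that the $\RR^1\lim$ correction sits on the $\HH^0$-terms, not the $\HH^1$-terms, so what you need is not the vanishing of the $\HH^1$'s but the Mittag--Leffler condition for the system $\HH^0(\wtilde X,\sh{O}_C/\varpi^n)=\sh{O}_C/\varpi^n$, which holds because the transition maps are surjective. Your sketch attributes the control of $\RR^1\lim$ to the wrong terms, though the needed input is available either way.
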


\begin{proof}
    This is essentially \cite[Prop.~4.9]{heuer2020vbundles}, with some minor
    adjustements.  We note that we have $\HH^0(\wtilde{X},\sh{O}_{C}/\varpi^n)
    = \sh{O}_{C}/\varpi^n$ which follows from
    \cite[Prop.~14.9]{scholze2017diamonds}, and implies $\HH^0(\wtilde{X}, C)
    \isoto C$.  When $X$ is proper, we also get the result on $\what{\sh{O}}_X$
    cohomology via the primitive comparison theorem with the same argument as
    \cite{heuer2020vbundles}.

    For the result on $\HH^1$, we first note that
    \[
        \HH^1(\wtilde{X}, \sh{O}_{C}/\varpi^n) = 0,
    \]
    since any torsor under this sheaf will be trivialized on 
    the inverse limit.
    This implies that $\HH^1(\wtilde{X}, C) = 0$
    via an $\RR\lim$ argument.
    Namely, we have that $\sh{O}_{C} = \RR\lim_n \sh{O}_{C}/\varpi^n$
    by the fact that $X_\qproet$ is replete,
    so there is a short exact sequence
    \[
        0 \to \RR^1\lim_{n}\HH^0(\wtilde X, \sh{O}_{C}/\varpi^n) \to
        \HH^1(\wtilde{X}, \sh{O}_{C}) \to 
        \lim_n \HH^1(\wtilde X, \sh{O}_{C}/\varpi^n) \to 0
    \]
    where the first term vanishes since the 
    projective system in question has surjective transition maps, 
    and the last term vanishes by the argument above.
    The claim now follows by inverting $p$.
\end{proof}

Now we can use $\wtilde{X}$ to build local systems just as we do in topology.
The universal cover $\wtilde{X} \to X$ is classified by a map $l \colon X \to
B\pi_1 = B\pi_1(X,\bar{x})$, and since a finite-dimensional continuous
$C$-representation of $\pi_1(X,\bar{x})$ can be seen as a $C$ local system on
$B\pi_1$, we obtain the first functor via pullback.

\begin{proposition}
    \label{prop::local_sytems}
    Let $X$ be a rigid-analytic variety over $\Spa C$, pointed and connected.
    The pullback map via $l \colon X \to B\pi$ defined above determines an
    exact, symmetric monoidal, fully-faithful functor
    \[
        l^* \colon \Rep_C(\pi_1(X,\bar{x})) \inj \LocSys_C(X),
    \]
    whose image consists on all local systems trivialized on a
    pro-finite-\'etale cover of $X$.  Furthermore for a representation $V$ we
    have
    \[
        \HH^0_\cont(\pi_1, V) \reqv \HH^0_\qproet(X,l^*V), \quad
        \HH^1_\cont(\pi_1, V) \reqv \HH^1_\qproet(X,l^*V).
    \]
\end{proposition}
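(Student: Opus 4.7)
The plan is to exploit the fact that $\wtilde X \to X$ is a pro-finite-étale $\pi_1$-torsor classifying the map $l$, and then combine the previous proposition identifying $\LocSys(B\pi_1,C) \eqv \Rep_C(\pi_1)$ with the cohomological vanishing results for $\wtilde X$ from Proposition \ref{prop::univ_cover}. Exactness and symmetric monoidality of $l^*$ are automatic since pullback is; the content lies in fully faithfulness, identification of the essential image, and the two cohomological isomorphisms.

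I would first establish the cohomological statements and then deduce fully faithfulness from the $\HH^0$ part by the standard trick. Since a $\pi_1$-torsor is in particular a quasi-pro-étale cover, the morphism $\wtilde X \to X$ gives rise to a Cartan--Leray type spectral sequence; more concretely, under the identification $X \eqv \wtilde X / \pi_1$ in the quasi-pro-étale topos of $X$, we get
\[
    E_2^{p,q} = \HH^p_\cont\bigl(\pi_1, \HH^q_\qproet(\wtilde X, l^*V)\bigr)
        \Rightarrow \HH^{p+q}_\qproet(X, l^*V),
\]
using the previous proposition to rewrite the cohomology of $B\pi_1$ with coefficients in a representation as continuous group cohomology. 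Now $l^*V$ restricts on $\wtilde X$ to the constant local system of value $V$, so
\[
    \HH^0_\qproet(\wtilde X, l^*V) = V \otimes_C \HH^0(\wtilde X, C) = V,
    \qquad
    \HH^1_\qproet(\wtilde X, l^*V) = V \otimes_C \HH^1(\wtilde X, C) = 0,
\]
by Proposition \ref{prop::univ_cover}. Reading off the low-degree terms of the spectral sequence then yields the asserted comparison isomorphisms for $\HH^0$ and $\HH^1$.

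Fully faithfulness of $l^*$ follows formally: for $V,W \in \Rep_C(\pi_1)$, the internal Hom $V^\dual \otimes W$ is still a representation, so
\[
    \Hom_{\Rep_C(\pi_1)}(V,W) = \HH^0_\cont(\pi_1, V^\dual \otimes W)
    \reqv \HH^0_\qproet(X, l^*(V^\dual \otimes W))
    = \Hom_{\LocSys_C(X)}(l^*V, l^*W),
\]
using that $l^*$ is symmetric monoidal. For the description of the essential image, one direction is clear: any $l^*V$ becomes constant, hence trivial, after pullback to $\wtilde X$, and $\wtilde X \to X$ is a pro-finite-étale cover. Conversely, if $\sh L$ is a $C$-local system that trivializes on some pro-finite-étale cover $Y \to X$, then by the universal property of $\wtilde X$ one may refine $Y$ to $\wtilde X$ and assume $\sh L |_{\wtilde X}$ is free of finite rank with fiber $V := \HH^0(\wtilde X, \sh L)$, a finite-dimensional $C$-vector space carrying a continuous $\pi_1$-action via descent data. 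Pro-finite-étale descent along the $\pi_1$-torsor $\wtilde X \to X$ (which is effective since it is a quasi-pro-étale cover) then identifies $\sh L$ with $l^*V$.

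The main obstacle is the $\HH^1$ comparison, as the $\HH^0$ part and the description of the essential image are essentially formal descent statements. The nontriviality comes from the vanishing $\HH^1(\wtilde X, C) = 0$, which is exactly what kills the $E_2^{0,1}$ term of the Cartan--Leray spectral sequence and allows the comparison to extend one degree higher; without this, only the $\HH^0$ statement (and hence only fully faithfulness) would be accessible.
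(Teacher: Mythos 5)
Your proposal is correct and follows essentially the same route as the paper: both rest on the vanishing results of Proposition \ref{prop::univ_cover} together with the \v{C}ech-to-sheaf (Cartan--Leray) spectral sequence for the $\pi_1$-torsor $\wtilde{X} \to X$, whose \v{C}ech complex computes continuous group cohomology. The only organizational difference is that the paper packages fully faithfulness and the essential image together by exhibiting the explicit quasi-inverse $L \mapsto \Gamma(\wtilde{X}, L)$, whereas you deduce fully faithfulness from the $\HH^0$ comparison via dualizability and identify the image by descent along the torsor.
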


\begin{proof}
    It is clear from the universal property of $\wtilde{X}$ that a local system
    is trivialized on a pro-finite-\'etale cover of $X$ if and only if it is
    trivial on $\wtilde{X}$.  Therefore we pass to the subcategory
    \[
        \ILocSys_C(X) \subset \LocSys_C(X)
    \]
    of such pro-finite-\'etale local systems and we show that pullback induces
    an equivalence.  Note that the following diagram
    \[
    \begin{tikzcd}
    \wtilde{X} \ar[r] \ar[d] & \Spa C \ar[d] \\
    X \ar[r] & B\pi_1
    \end{tikzcd}
    \]
    is cartesian, which implies that the essential image of the pullback does
    indeed lie in $\ILocSys_C(X)$ since all local systems on $\Spa C$ are
    trivial.

    Now the inverse of the pullback is given by the functor
    \[
        L \mapsto \Gamma(L,\wtilde{X})
    \]
    which is a finite-dimensional $C$ vector space (by proposition
    \ref{prop::univ_cover}) with a continuous $\pi_1(X,\bar{x})$-action induced
    from the action on $\wtilde{X}$.

    The results about cohomology follow straight from the \v{C}ech-to-sheaf
    cohmology spectral sequence and the computation $\HH^1(\wtilde{X},C) = 0$
    on proposition \ref{prop::univ_cover} (since the \v{C}ech cohomology of
    $\wtilde{X}/X$ computes the continuous cohomology of those local systems
    which become trivial on it).
\end{proof}

\begin{remark}
    One can show that $\ILocSys_C(X)$ are the same as local systems $L$ which
    admit a $\sh{O}_{C}$-lattice, that is, a sub-$\sh{O}_{C}$-module $\sh{L}
    \subset L$ such that $\sh{L}$ is a $\sh{O}_{C}$-local system and $L =
    \sh{L}[1/p]$. 
\end{remark}

There is also a map $\LocSys_C(X) \to \VB(X_\qproet)$ which is much simpler to
describe.  It is simply given by base change:
\[
    L \mapsto L \otimes_C \what{\sh{O}}_X \in \VB(X_\qproet).
\]
This map is not as well behaved as the first one.  There is no hope of this
functor being fully-faithful unless $X$ is proper since
\[
    C \isoto \Hom(C,C) \to \Hom(\what{\sh{O}}_X, \what{\sh{O}}_X) 
    \isoto \Gamma(X, \what{\sh{O}}_X)
\]
is not necessarily an isomorphism.  However, even if $X$ is proper we can only
guarantee that 
\[
    \Hom(L_1, L_2) \reqv 
    \Hom(L_1 \otimes \what{\sh{O}}_X, L_2 \otimes \what{\sh{O}}_X)
\]
is an isomorphism when $L_1$ and $L_2$ are trivialized by some
quasi-pro-\'etale cover $Y \to X$ with $\HH^0(Y, \what{\sh{O}}_X) = C$.  Taking
into consideration \ref{prop::univ_cover} we obtain a fully faithful functor
$\ILocSys_C(X) \inj \VB(X_\qproet)$.  The image of such functor is clear: 
it consists on those vector bundles which are trivial over a
pro-finite-\'etale cover of $X$.  Let $\VB(X)_\profet$ denote the category of
such bundles
\footnote{
    Not to be confused with $\VB(X_\profet)$, which are vector bundles in the
    pro-finite-\'etale topology.  These are identified canonically with
    $\Rep_C(\pi_1(X,\bar{x}))$, and therefore only agrees with $\VB(X)_\profet$
    when $X$ is proper by this theorem.
}.

\begin{theorem}
    [{\cite[Thm.~5.2]{heuer2020vbundles}}] \label{thm::loc_sys} Let $X$ be a
    proper, rigid analytic space over $C$, with a fixed geometric point
    $\bar{x} \inj X$.  The functors defined above define exact, symmetric
    monoidal equivalences
    \[
        \Rep_C(\pi_1(X,\bar{x})) \reqv \ILocSys_C(X) 
        \reqv \VB(X)_\profet.
    \]
\end{theorem}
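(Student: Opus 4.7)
The first equivalence $\Rep_C(\pi_1(X,\bar{x})) \reqv \ILocSys_C(X)$ is essentially Proposition~\ref{prop::local_sytems}, since $\ILocSys_C(X)$ was defined precisely as the essential image of $l^*$.  The substance of the theorem therefore lies in the second equivalence, which I would establish via the base change functor
\[
    F \colon \ILocSys_C(X) \to \VB(X_\qproet), \qquad L \mapsto L \otimes_{\underline{C}} \what{\sh O}_X.
\]
This functor is manifestly exact and symmetric monoidal, and its image lies in $\VB(X)_\profet$: if $L$ is trivialized on the universal pro-finite-\'etale cover $\wtilde X \to X$, then so is $F(L)$.

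For full faithfulness, I would pass to internal $\shHom$ (which preserves both categories) to reduce to showing $\Gamma(X, L) \reqv \Gamma(X, L \otimes \what{\sh O}_X)$ for every $L \in \ILocSys_C(X)$.  Pulling back along the $\pi_1$-torsor $\wtilde X \to X$ and trivializing, both sides become $\pi_1$-invariants of a constant module $V = \Gamma(\wtilde X, L|_{\wtilde X})$, with the right-hand side twisted by the scalar $\HH^0(\wtilde X, \what{\sh O}_X)$.  Properness of $X$ enters precisely through Proposition~\ref{prop::univ_cover}, which gives $\HH^0(\wtilde X, \what{\sh O}_X) = C$ and forces the two computations to coincide.

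For essential surjectivity, I would invoke $\vv$-descent (Theorem~\ref{thm::qproet_vv_bundles}) along the cover $\wtilde X \to X$.  Any $E \in \VB(X)_\profet$ pulls back to a trivial bundle on $\wtilde X$, and the descent datum on $\wtilde X \times_X \wtilde X \isoto \wtilde X \times \underline{\pi_1(X,\bar x)}$ becomes, after a choice of trivialization, a cocycle in
\[
    \GL_n\bigl( \HH^0(\wtilde X \times \underline{\pi_1}, \what{\sh O}_X) \bigr)
    \isoto \GL_n\bigl(\Hom_\cont(\pi_1, C)\bigr),
\]
the last identification again hinging on $\HH^0(\wtilde X, \what{\sh O}_X) = C$ together with the fact that sections over the profinite factor parametrize continuous maps.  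Such a cocycle is precisely a continuous $C$-linear representation of $\pi_1$, producing a preimage $L \in \ILocSys_C(X)$ whose image $F(L)$ recovers $E$ by construction.

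The main obstacle throughout is this last identification: namely, that descent data on the pro-finite-\'etale cover $\wtilde X \to X$ convert cleanly into \emph{continuous} representations of $\pi_1$ on a finite-dimensional $C$-vector space.  Without properness, $\HH^0(\wtilde X, \what{\sh O}_X)$ can be vastly larger than $C$, and cocycles with values in this larger ring do not in general descend to continuous $C$-representations.  Everything else is a formal consequence of quasi-pro-\'etale descent combined with Proposition~\ref{prop::univ_cover}.
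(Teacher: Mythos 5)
Your proposal is correct and follows essentially the same route as the paper: the paper's proof is literally ``follows from the above discussion,'' and that discussion is precisely your argument --- the first equivalence is Proposition~\ref{prop::local_sytems}, the second is the base-change functor $L \mapsto L \otimes_C \what{\sh{O}}_X$, with full faithfulness reduced via dualizability/internal Hom to $\HH^0(\wtilde X, \what{\sh{O}}_X) = C$ from Proposition~\ref{prop::univ_cover}, and the essential image identified as $\VB(X)_\profet$. The only difference is that you spell out essential surjectivity (descent along the $\pi_1$-torsor $\wtilde X \to X$ converting cocycles into continuous representations), which the paper dismisses as clear; your identification of properness as the crux, entering through $\HH^0(\wtilde X, \what{\sh{O}}_X) = C$, matches the paper exactly.
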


\begin{proof}
    Follows from the above discussion.
\end{proof}

\begin{remark}
    The equivalence $\ILocSys_C(X) \reqv \VB(X)_\profet$, in contrast with the
    first one, preserves all cohomology groups, and hence can be enriched to an equivalence
    \[
        \DPerf(X, C)_\profet \reqv \DPerf(X, \what{\sh{O}}_X)_\profet,
    \]
    where the subscript $\profet$ means we are considering objects trivialized 
    over a pro-finite-étale cover (equivalently $\wtilde{X}$). This follows from the
    full primitive comparison theorem, which is analogous to Theorem \ref{thm::primitive_comparison}, 
    but works for arbitrary local systems.  As we will focus on unipotent local systems
    later on, we will simply deduce the cohomological comparison from the case
    of the unit, to emphasize this unipotent technique.
\end{remark}

\begin{remark}
    The above theorem is a non-archimedian analogue of a well known phenomenon.
    For complex manifolds, the analogous functors (where $\bb{C}$ is denotes
    the complex numbers) 
    \[
        \Rep_{\bb{C}}(\pi_1(X, x)) \reqv \LocSys_{\bb{C}}(X) \reqv \VB^\nabla(X_\an)
    \]
    identify all local systems as coming from a representation (a very weak version
    of Riemann-Hilbert). Furthermore, the category of quasi-pro-étale bundles
    becomes the category of analytic bundles endowed with a flat connection.

    In our case, we observe that even if one wanted to define a flat connection
    on a quasi-pro-étale vector bundle, this could not be the naive definition,
    as we know that this topology is locally perfectoid, and those spaces have,
    in some sense, no differentials.

    In terms of the proof given above, this difference is related to the fact
    that the (topological) universal cover of $X$ is almost never compact (so
    it has too many global sections).  
\end{remark}

\subsection{Unipotent bundles and $\RR \nu_* \what{\sh{O}}_X$-modules.}
We are interested in unipotent objects in these categories.  If $\cat{C}$ is a
symmetric monoidal abelian category, we denote by $\cat{C}^\uni$ the full
subcategory of $\cat{C}$ generated by \emph{unipotent} objects, that is,
successive extensions of the unit.

There is also a derived version of unipotence.  Let $\icat{D}$ be a symmetric
monoidal stable infinity category.  We denote by $\icat{D}^\uni \subset
\icat{D}$ the smallest stable subcategory of $\icat{D}$ that contains the unit.
An object of $\icat{D}^\uni$ is called \emph{derived unipotent}.

In general, even if $\cat{C} \subset \icat{D}$ is the heart of some
$\ttt$-structure, we cannot guarantee that the notions of derived unipotence
and unipotence agree.  For example, if $\cat{C}$ is the category of finitely
generated $R$ modules, for $R$ a PID, and $\icat{D} = \DPerf(R)$, the only
unipotent $R$-modules are the trivial ones ($R$ is projective) but \emph{all}
finite modules are derived unipotent since they can be written as a cone
    $M \isoto \cof(R^m \to R^n)$.
However, the converse always holds, as we show here below.

For concreteness, we also note we have \emph{derived variants} of all objects
we are considering.  Instead of $\Rep_X(\pi_1)$, $\LocSys_C(X)$ and
$\VB(X_\qproet)$ we can consider
\[
    \DPerf(B\pi_1, C),~\DPerf(X, C),~\DPerf(X, \what{\sh{O}}_X)
\]
respectively (all underlying topologies are taken to be quasi-pro-\'etale).

\begin{lemma}
    Let $X$ be a rigid-analytic variety over $C$.  Let $\cat{C}$ be either
    $\Rep_C(\pi_1(X,\bar{x}))$, $\LocSys_C(X)$ or $\VB(X_\qproet)$, and let
    $\icat{D}$ be its derived variant.  Then
    \[
        \cat{C}^\uni \subset \icat{D}^\uni \cap \cat{C},
    \]
    that is, every unipotent object is derived unipotent.
\end{lemma}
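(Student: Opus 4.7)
The plan is to proceed by induction on the length of the unipotent filtration. Let $U \in \cat{C}^\uni$, so by definition there is a finite filtration $0 = U_0 \subset U_1 \subset \cdots \subset U_n = U$ in $\cat{C}$ with every subquotient $U_i/U_{i-1} \cong \un$. The base case $n=0$ is trivial and $n=1$ just says $U \cong \un \in \icat{D}^\uni$.

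For the inductive step, I would use that $\cat{C}$ sits inside $\icat{D}$ as (part of) the heart of a $\ttt$-structure in each of the three cases — this is standard for $\Rep_C(\pi_1)$, $\LocSys_C(X)$, and $\VB(X_\qproet)$ once one fixes the natural $\ttt$-structures on $\DPerf(B\pi_1,C)$, $\DPerf(X,C)$ and $\DPerf(X,\what{\sh{O}}_X)$ respectively. Consequently, a short exact sequence
\[
    0 \to U_{n-1} \to U_n \to \un \to 0
\]
in $\cat{C}$ promotes to a fiber/cofiber sequence
\[
    U_{n-1} \to U_n \to \un
\]
in $\icat{D}$. By inductive hypothesis $U_{n-1} \in \icat{D}^\uni$, and of course $\un \in \icat{D}^\uni$ by definition. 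Since $\icat{D}^\uni$ is a stable subcategory of $\icat{D}$, it is closed under cofibers (equivalently, extensions), so $U_n \in \icat{D}^\uni$ as well.

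The only subtle point — which is where I would expect to spend a sentence or two of justification — is the passage from a short exact sequence in the abelian $\cat{C}$ to a cofiber sequence in $\icat{D}$. This requires identifying $\cat{C}$ with (a full subcategory of) the heart of a $\ttt$-structure whose inclusion into $\icat{D}$ preserves kernels and cokernels in the appropriate sense; in all three cases of interest this is a standard fact about derived categories of sheaves on a site (or, for $\Rep_C(\pi_1)$, of condensed modules over a group ring), but one should remark that the category of \emph{finite-dimensional} representations or \emph{perfect} complexes is already closed under such extensions, so the $U_n$ we produce really lies in $\icat{D}^\uni \cap \cat{C}$ and not merely in some larger category of ind-objects. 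Once this is in place, the induction goes through with no further work.
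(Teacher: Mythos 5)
Your proposal is correct and follows essentially the same route as the paper: induct on the length of the unipotent filtration and use that a short exact sequence $0 \to E' \to E \to E'' \to 0$ in $\cat{C}$ becomes a fiber sequence in $\icat{D}$, whose stable unipotent subcategory is closed under extensions. The paper's proof is just a terser version of yours (it even leaves the $\ttt$-structure compatibility implicit, which you rightly flag as the one point deserving a word of justification).
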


\begin{proof}
    To see that every unipotent object is derived unipotent, we just note that
    if $0 \to E' \to E \to E'' \to 0$ is an extension, then $E = \fib(E'' \to
    E[1])$ in the derived variant, so $E$ is derived unipotent by induction on
    the rank.
\end{proof}

\begin{theorem}
    \label{prop::uni_loc_sys}
    Let $X$ be a proper and connected rigid-analytic variety over $C$.  The
    functors above induce equivalences on unipotent objects
    \[
        \Rep_C(\pi_1(X,\bar{x}))^\uni \reqv
        \LocSys_C(X_\qproet)^\uni \reqv 
        \VB(X_\proet)^\uni.
    \]
    For derived unipotent objects, we also have
    $
        \DPerf(X,C)^\uni \reqv \DPerf(X,\what{\sh{O}}_X)^\uni.
    $
\end{theorem}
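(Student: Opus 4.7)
The plan is to bootstrap from Theorem~\ref{thm::loc_sys}, which already provides equivalences of the full categories $\Rep_C(\pi_1(X,\bar x)) \reqv \ILocSys_C(X) \reqv \VB(X)_\profet$. Granting this, it suffices to verify two inclusions of full subcategories: that every unipotent object of $\LocSys_C(X_\qproet)$ is already pro-finite-\'etale trivializable (hence lies in $\ILocSys_C(X)$), and that every unipotent object of $\VB(X_\qproet)$ is pro-finite-\'etale trivializable (hence lies in $\VB(X)_\profet$). Both claims reduce, by induction on the length of a unipotent filtration, to showing that a short exact sequence $0 \to L' \to L \to L'' \to 0$ whose outer terms are trivial on $\widetilde X$ has $L|_{\widetilde X}$ trivial as well; equivalently, that the extension class pulled back to $\widetilde X$ vanishes.

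For local systems, this is immediate from Proposition~\ref{prop::univ_cover}: the pulled back class lives in $\HH^1(\widetilde X, \underline{C}^{\oplus N}) = 0$. For vector bundles, one needs the analogous vanishing $\HH^1(\widetilde X, \what{\sh{O}}_X) = 0$, which I would establish by repeating the $\RR\lim$ argument of prop::univ_cover, now fed by the full primitive comparison theorem: each finite-\'etale cover $X' \to X$ in the inverse system defining $\widetilde X$ is again smooth and proper, so primitive comparison canonically and pullback-compatibly identifies $\HH^i(X', \sh{O}_X^+/p^n)$ with $\HH^i_\et(X', \bb{F}_p) \otimes \sh{O}_C/p^n$ up to almost isomorphism; passing to the colimit over the tower kills $\bb{F}_p$-cohomology in positive degree, since $\widetilde X$ has trivial pro-finite-\'etale fundamental group, and $\RR\lim$ followed by inverting $p$ then yields $\HH^1(\widetilde X, \what{\sh{O}}_X) = 0$.

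For the derived statement, the plan is to descend along the pro-finite-\'etale $\pi_1(X,\bar x)$-torsor $\widetilde X \to X$ and identify both $\DPerf(X, C)^\uni$ and $\DPerf(X, \what{\sh{O}}_X)^\uni$ with $\DPerf(B\pi_1, C)^\uni$. The key input is that both $\RR\Gamma(\widetilde X, \underline{C})$ and $\RR\Gamma(\widetilde X, \what{\sh{O}}_X)$ are concentrated in degree zero with value $C$: once this is known, a derived unipotent perfect complex becomes, on pullback to $\widetilde X$, a bounded complex of trivial bundles, and descends uniquely to a $\pi_1$-equivariant perfect $C$-complex. The main obstacle is precisely the higher-degree vanishing $\HH^i(\widetilde X, -) = 0$ for $i \ge 1$, which prop::univ_cover only delivers up to degree one. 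For $\underline{C}$-coefficients this reflects the pro-finite-\'etale asphericity of $\widetilde X$ (any cohomology class is killed after pulling back along a suitable finite-\'etale cover appearing in $\widetilde X$), and for $\what{\sh{O}}_X$-coefficients it follows, uniformly in all degrees, from the same primitive-comparison-plus-$\RR\lim$ argument. Making these $\RR\lim$ comparisons rigorous in every degree is the technical heart of the proof, but the formal skeleton is identical to the abelian case above.
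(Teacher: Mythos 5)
Your treatment of the underived statement is a legitimate alternative to the paper's argument: instead of reducing full faithfulness to a unipotent extension of the primitive comparison theorem by induction on the rank (which is what the paper does), you import Theorem~\ref{thm::loc_sys} wholesale and reduce everything to showing that unipotent objects are trivialized on $\wtilde{X}$. This works, at the cost of one input the paper never states, namely $\HH^1(\wtilde{X},\what{\sh{O}}_X)=0$ for $X$ proper (Proposition~\ref{prop::univ_cover} only records the $\HH^0$ statement for $\what{\sh{O}}_X$); your primitive-comparison-plus-$\RR\lim$ sketch of this vanishing is sound, the essential point being that a class in $\HH^1(X',\bb{F}_p)$ \emph{is} an $\bb{F}_p$-torsor, hence a finite \'etale cover, hence dies further up the tower.

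The derived part of your proposal, however, has a fatal gap. You assert that $\HH^i(\wtilde{X},\underline{C})$ and $\HH^i(\wtilde{X},\what{\sh{O}}_X)$ vanish for \emph{all} $i\geq 1$ because ``any cohomology class is killed after pulling back along a suitable finite-\'etale cover''. That mechanism operates only in degree one, where a class is itself a torsor under a finite group and so splits somewhere in the tower; nothing of the sort is available in degree $\geq 2$. Take $X=\bb{P}^1$: it is simply connected, so $\wtilde{X}=X$, and by the primitive comparison theorem $\HH^2(\wtilde{X},\what{\sh{O}}_X)=\HH^2_\et(\bb{P}^1,C)\neq 0$, and likewise $\HH^2(\wtilde{X},\underline{C})\neq 0$. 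Consequently a derived unipotent complex does \emph{not} split into shifts of the unit after pullback to $\wtilde{X}$, the proposed descent to $\pi_1$-equivariant complexes breaks down, and the conclusion you are aiming for --- $\DPerf(X,C)^\uni\eqv\DPerf(B\pi_1,C)^\uni$ --- is in fact false: the remark immediately following this theorem in the paper points out that $\DPerf(B\pi_1,C)\to\DPerf(X,C)$ is not fully faithful even on unipotent objects, precisely because \'etale cohomology is not group cohomology unless $X$ is a $K(\pi,1)$. The paper's route avoids $\wtilde{X}$ entirely in the derived setting: by dualizability, full faithfulness reduces to the comparison $\RR\Gamma(X,L)\reqv\RR\Gamma(X,L\otimes\what{\sh{O}}_X)$ for derived unipotent $L$, and the full subcategory of those $L$ for which this holds is stable and contains the unit, hence contains $\DPerf(X,C)^\uni$; essential surjectivity is then automatic because the image is a stable subcategory containing $\what{\sh{O}}_X$. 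You should replace your descent argument by this one.
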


\begin{proof}
    We note that all functors are symmetric monoidal, since they are induced by
    pullback maps of ringed topoi.  We start by proving that the second arrow
    is fully faithful (classical or derived).  Since all objects are dualizable
    in these categories, we have that
    \[
        \Hom(X,Y) = \Hom(\bb{1}, X^\vee \otimes Y),
    \]
    so in particular, it is enough to consider maps $\bb{1} \to L$, for a local
    system $L$.  Therefore we reduce the above question to an extension of the
    primitive comparison theorem (Thm.~\ref{thm::primitive_comparison})
    \[
        \RR\Gamma(X,L) \reqv \RR\Gamma(X, L \otimes \what{\sh{O}}_X)
    \]
    to all unipotent coefficients.

    For the classical (non-derived) case, this follows by the base case and an
    induction on the rank.  Namely, we can find an extension $0 \to L' \to L
    \to C \to 0$ which induces a diagram
    \[
    \begin{tikzcd}
        R\Gamma(L') \ar[r] \ar[d, "\sim"]&
        R\Gamma(L) \ar[r] \ar[d] &
        R\Gamma(C)  \ar[d, "\sim"] \\
        R\Gamma(L' \otimes \what{\sh{O}}_X) \ar[r]&
        R\Gamma(L \otimes \what{\sh{O}}_X) \ar[r]&
        R\Gamma(\what{\sh{O}}_X) 
    \end{tikzcd}
    \]
    which implies that $R\Gamma(L) \reqv R\Gamma(L \otimes \what{\sh{O}}_X)$ as
    required.  For the derived case, we just consider the subcategory of all
    objects $\sh{L}$ such that the arrow above is an isomorphism.  We then note
    that $C$ is in such category, and its closed under shifts and fibers (by a
    variation of the argument above), and hence it is $\DPerf(X,C)^\uni$.

    For essential surjectivity in the classical case, it is enough to see that
    if $V$ is a unipotent representations of the fundamental group, then any
    extension of $V$ by $\what{\sh{O}}_X$, as a quasi-pro-\'etale vector
    bundle, comes from another representation.  This translates to the question
    of whether the maps
    \[
        \HH^1_\cont(\pi_1(X,\bar{x}), V) \reqv 
        \HH^1(X, L) \reqv 
        \HH^1(X, L \otimes \what{\sh{O}}_X),
    \]
    are isomorphisms, which we know from Proposition \ref{prop::local_sytems}
    and the discussion above.

    The only thing left to argue is that the functor $\DPerf(X,C)^\uni \inj
    \DPerf(X,\what{\sh{O}}_X)^\uni$ is essentially surjective.  But that is
    easy, as it is exact and hence the image is stable and contains
    $\what{\sh{O}}_X$.
\end{proof}

\begin{remark}
    The functor 
    \[
        \DPerf(\pi_1, C) \to \DPerf(X, C)
    \]
    is \emph{not} fully faithful, even when restricted to unipotent objects.
    This makes sense, because we do not expect in general
    for \'etale cohomology to be computed as
    group cohomology.
    When this happens, we could say that $X$ is a $K(\pi,1)$
    for $p$-adic coefficients.
\end{remark}

\begin{corollary}
    \label{cor::pi_1_invariance}
    Let $X$, $Y$ be smooth, proper, connected, pointed
    rigid-analytic varieties over $C$ and suppose that a pointed map
    $f \colon X \to Y$ induces an equivalence 
    \[
        \pi_1(X,\bar{x}) \reqv \pi_1(Y, f\bar{x}).
    \]
    Then it also induces via pullback symmetric monoidal equivalences
    $\VB(Y_\qproet)^\uni \reqv \VB(X_\qproet)^\uni$, and 
    $\LocSys(Y)^\uni \reqv \LocSys_C(X)^\uni$.
\end{corollary}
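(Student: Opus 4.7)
The plan is to reduce everything to the equivalence with continuous representations of the fundamental group, where the statement becomes tautological. Concretely, Theorem~\ref{prop::uni_loc_sys} (applied to $X$ and to $Y$, both of which satisfy the hypotheses of being smooth, proper, connected, and pointed) provides symmetric monoidal equivalences
\[
    \VB(X_\qproet)^\uni \leftarrow \LocSys_C(X)^\uni \leftarrow \Rep_C(\pi_1(X,\bar{x}))^\uni
\]
and analogously for $Y$ with base point $f\bar{x}$. An isomorphism of profinite groups $\varphi \colon \pi_1(X,\bar{x}) \reqv \pi_1(Y,f\bar{x})$ induces a symmetric monoidal equivalence on continuous finite-dimensional representations by restriction along $\varphi$, which is the identity on underlying vector spaces.

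Next I would check that the square
\[
\begin{tikzcd}
    \Rep_C(\pi_1(Y,f\bar{x}))^\uni \ar[r, "\sim"] \ar[d, "\varphi^*"'] &
    \VB(Y_\qproet)^\uni \ar[d, "f^*"] \\
    \Rep_C(\pi_1(X,\bar{x}))^\uni \ar[r, "\sim"] &
    \VB(X_\qproet)^\uni
\end{tikzcd}
\]
commutes (and similarly with $\LocSys_C$ in place of $\VB$). This is where naturality of the constructions is used: the horizontal arrows are given by pulling back along the classifying maps $X \to B\pi_1(X,\bar{x})$ and $Y \to B\pi_1(Y,f\bar{x})$, and the universal property of the universal pro-finite-\'etale cover (together with the pointed nature of $f$) produces a commutative square
\[
\begin{tikzcd}
    X \ar[r] \ar[d, "f"] & B\pi_1(X,\bar{x}) \ar[d, "B\varphi"] \\
    Y \ar[r] & B\pi_1(Y,f\bar{x})
\end{tikzcd}
\]
in pro-\'etale stacks. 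Pulling back sheaves around this square and composing with the constructions of the previous theorem gives the desired commutativity; since the horizontal arrows are equivalences and the left vertical one is, so is $f^*$.

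The main (and only) subtlety is the verification that $f^*$ corresponds to $\varphi^*$ under the identifications of Theorem~\ref{prop::uni_loc_sys}, but, having set everything up via pullback along classifying maps of torsors and of the universal cover, this is a formal consequence of the functoriality of the universal pro-finite-\'etale cover under pointed morphisms. The argument for $\LocSys_C$ is identical, using the middle equivalence of Theorem~\ref{prop::uni_loc_sys}. Finally, symmetric monoidality is inherited at each step since every functor involved is the pullback of a symmetric monoidal pullback between ringed topoi.
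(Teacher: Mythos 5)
Your proposal is correct and takes essentially the same route as the paper, which states this corollary without proof as an immediate consequence of Theorem~\ref{prop::uni_loc_sys}: both sides are identified with $\Rep_C(\pi_1)^\uni$, which depends only on the fundamental group. Your naturality check (that $f^*$ matches $\varphi^*$ under the classifying-map identifications) is exactly the detail the paper leaves implicit, and it is carried out correctly.
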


We now finish this section by relating unipotent quasi-pro-\'etale vector 
bundles on $X$ to modules over its derived endomorphism algebra.
Here we recall the existence of a lax functor
\[
    \RR\nu_* \colon \DPerf(X_\qproet,\what{\sh{O}}_X) \to 
    \icat{D}(X_\et),
\]
which defines for us a sheaf of $\bb{E}_\infty$-algebras
$\RR\nu_* \what{\sh{O}}_X$ on $X_\et$.

\begin{proposition}
    \label{prop::bundles_and_modules}
    Let $X$ be a rigid-analytic variety.
    The above functor defines a symmetric monoidal equivalence
    \[
        \DPerf(X, \what{\sh{O}}_X)^\uni \reqv 
        \Mod{\RR\nu_*\what{\sh{O}}_X}^\uni,
    \]
    which identifies $\VB(X_\qproet)^\uni$ as the smallest subcategory of
    the right hand side which contains the unit and is closed under
    extensions
    (in the sense of fiber sequences).
\end{proposition}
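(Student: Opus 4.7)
The plan is to factor $\RR\nu_*$ as a symmetric monoidal functor into $\Mod{\RR\nu_*\what{\sh{O}}_X}$ and then to reduce the equivalence on derived unipotent subcategories to a cohomology computation on the unit via a standard generator argument. First I would observe that $\nu^*$ is the pullback along a morphism of ringed topoi, hence symmetric monoidal, so $\RR\nu_*$ is lax symmetric monoidal. By the machinery of \cite{HA}, any such lax monoidal functor canonically factors through modules over the image of the unit, yielding a symmetric monoidal exact functor
\[
    \Phi \colon \DPerf(X_\qproet, \what{\sh{O}}_X) \to \Mod{\RR\nu_*\what{\sh{O}}_X}
\]
that preserves units and therefore carries $\DPerf^\uni$ into $\Mod{\RR\nu_*\what{\sh{O}}_X}^\uni$.

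Next I would invoke the following dévissage principle: if $F \colon \icat{C} \to \icat{D}$ is an exact functor of stable $\infty$-categories with $F(x) = y$, and if the induced map $\Hom_{\icat{C}}(x, x[n]) \to \Hom_{\icat{D}}(y, y[n])$ is an isomorphism for every $n \in \bb{Z}$, then $F$ restricts to an equivalence between the smallest stable subcategories generated by $x$ and $y$. Fully-faithfulness is a double dévissage (the full subcategory of targets, then of sources, for which the Hom-map is an isomorphism is stable and contains all shifts of the generator); essential surjectivity is immediate from stability of the image. Applying this with $x = \what{\sh{O}}_X$ and $F = \Phi$, the required isomorphism on mapping spectra unwinds to the Leray identity
\[
    \HH^n(X_\qproet, \what{\sh{O}}_X) \isoto \HH^n(X_\et, \RR\nu_*\what{\sh{O}}_X),
\]
which is built into the construction of $\RR\nu_*$. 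This yields the desired equivalence $\DPerf(X, \what{\sh{O}}_X)^\uni \reqv \Mod{\RR\nu_*\what{\sh{O}}_X}^\uni$.

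For the final assertion, $\VB(X_\qproet)^\uni$ is by definition the smallest full subcategory of $\DPerf^\uni$ containing the unit and closed under fiber sequences of degree-$0$ objects; since $\Phi$ is exact and unit-preserving, it identifies this subcategory with the corresponding extension-closed subcategory on the right. The main technical input — rather than an obstacle — is the $\bb{E}_\infty$-monoidal enhancement making $\Phi$ a functor into modules, which is precisely why the paper is set up in the $\infty$-categorical framework; once this is in place, everything else reduces to the formal dévissage above together with one application of the Leray spectral sequence.
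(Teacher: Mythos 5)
There is a genuine gap at your first step. The canonical factorization of a lax symmetric monoidal functor through modules over the image of the unit produces a functor $\Phi \colon \DPerf(X,\what{\sh{O}}_X) \to \Mod{\RR\nu_*\what{\sh{O}}_X}$ that is again only \emph{lax} symmetric monoidal: the structure maps
\[
    \RR\nu_*\sh{E} \otimes_{\RR\nu_*\what{\sh{O}}_X} \RR\nu_*\sh{E}'
    \to \RR\nu_*(\sh{E}\otimes\sh{E}')
\]
are not equivalences in general, and your assertion that the factorization is ``a symmetric monoidal exact functor'' is precisely the point that requires proof. Since the proposition claims a \emph{symmetric monoidal} equivalence, this cannot be absorbed into ``the machinery of [HA].'' The paper's proof devotes its first half to exactly this: fix $\sh{E}$, consider the full subcategory of those $\sh{V}$ for which the lax structure map is an equivalence, and check that it contains the unit and is stable (both rows of the relevant diagram are fiber sequences, so two-out-of-three applies); hence it contains every derived unipotent object. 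Your own d\'evissage template would deliver this if you also ran it on the lax structure maps, but as written you never do.

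The remainder of your argument is essentially correct, and in one respect cleaner than the paper's: the double d\'evissage on mapping spectra, seeded by $\RR\Gamma_{\et}\circ\RR\nu_* \simeq \RR\Gamma_{\qproet}$, gives fully faithfulness on the stable subcategories generated by the unit without invoking dualizability, whereas the paper reduces $\Hom(\sh{E},\sh{E}')$ to $\Hom(\bb{1},\sh{E}^\vee\otimes\sh{E}')$ and therefore needs the monoidal compatibility established first. For the last assertion you are also too quick: one must check that the extension closure of the unit on the right-hand side does not leave degree zero, i.e.\ that a fiber sequence in $\DPerf(X,\what{\sh{O}}_X)$ whose outer terms are unipotent vector bundles has middle term again a (unipotent) vector bundle. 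This follows from the long exact sequence of cohomology sheaves, as the paper notes, but it is an actual step rather than a matter of definition.
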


\begin{proof}
    The proof is similar to the theorem above, but now
    the work lies in showing that the lax structure maps
    \[
        \RR \nu_* \sh{E} \otimes_{\RR\nu_* \what{\sh{O}}_X} 
            \RR \nu_* \sh{E}' \reqv
        \RR \nu_* (\sh{E} \otimes \sh{E}')
    \]
    are equivalences.
    We hence fix $\sh{E}$ and consider the full subcategory 
    of all $\sh{V}$ such that the lax morphism above is an equivalence.
    Then this category contains the unit and is stable since it is closed
    under shifts and fiber sequences:
    if $\sh{V}' \to \sh{V} \to \sh{V}''$ is a fiber sequence then
    \[
    \begin{tikzcd}
        \RR\nu_* \sh{V}' \otimes_{\RR\nu_* \what{\sh{O}}_X} 
            \RR\nu_* \sh{E} \ar[r] \ar[d] & 
        \RR\nu_* \sh{V} \otimes_{\RR\nu_* \what{\sh{O}}_X} \RR\nu_* \sh{E} 
            \ar[d] \ar[r] &
            \RR\nu_* \sh{V}'' \otimes_{\RR\nu_* \what{\sh{O}}_X} 
            \RR\nu_* \sh{E} \ar[d] \\
        \RR\nu_* (\sh{V}' \otimes \sh{E} ) \ar[r] & 
        \RR\nu_* (\sh{V} \otimes \sh{E} ) \ar[r] & 
        \RR\nu_* (\sh{V}'' \otimes \sh{E} )  
    \end{tikzcd}
    \]
    commutes and both rows are fiber sequences, hence if
    two of the vertical arrows are isomorphisms, so is the third.

    Now the proof follows the same arguments as before.
    Namely we use that every object is dualizable to reduce fully 
    faithfulness to the claim that 
    \[
        \RR\Gamma_\et \circ \RR\nu_*  \isoto
        \RR\Gamma ,
    \]
    where the last global sections are taken in the quasi-pro-\'etale site,
    but this is clear.
    Essential surjectivity follows by stability of the image.

    The second claim now follows suit, since any extension in 
    $\DPerf(X,\what{\sh{O}}_X)$ of objects in 
    $\VB(X_\qproet, \what{\sh{O}}_X)$ will automatically come from a
    vector bundle
    (take long exact sequence in cohomology, note that the extension is
    concentrated in degree zero and that the category of 
    unipotent vector bundles is closed under extensions).
    We also note that this is a monoidal subcategory, since
    vector bundles are flat.
\end{proof}


\section{Higgs bundles}
We start by finally defining the other side of the correspondence: Higgs
bundles.  These objects were first defined on curves by Hitchin, and
generalized to higher dimensional varieties by Simpson.

\subsection{Higgs bundles}
In this section we define Higgs bundles from a ``hands-on'' perspective, as a
rigid tensor category.  We will later see that Higgs bundles also admit a
derived version, as a symmetric monoidal stable infinity-category.

We recall that we denote by $\wtilde\Omega^1 = \Omega^1(-1)$ the sheaf of
twisted differentials. Similarly $\wtilde\Omega^n =
(\wtilde\Omega^1)^{\otimes n} = \Omega^n(-n)$ for $n \in \bb{Z}$.  (The
inverse is the dual.) Similarly, $\wtilde T_X = T_X(1) = \wtilde\Omega^{-1}$.

\begin{definition}
    \label{def::higgs}
    Let $X$ be a smooth rigid-analytic variety over $C$, 
    ${E}$ a vector bundle on $X_\et$,
    and $\wtilde\Omega^1$ the bundle of twisted differentials.
    A \emph{Higgs field} on ${E}$ is a global section  
    $\theta \in \Gamma(X,\End({E}) \otimes \wtilde\Omega^1)$
    subject to the condition that
    \begin{equation*}
        \theta \wedge \theta = 0 \quad \text{in } 
        \wtilde\Omega^2 \otimes \End(E)
    \end{equation*}
    A \emph{Higgs bundle} is a vector bundle $E$ on $X_\et$
    endowed with a Higgs field $\theta$.
    
    A morphism of Higgs bundle is a morphism of the underlying
    $\sh{O}_{X}$-modules commuting with the Higgs field.
    We denote the category of Higgs bundles by $\Higgs(X)$.
\end{definition}

\begin{remark}
    Locally, when $E \isoto \sh{O}_{X}^n$, a section $\theta \in \Gamma(X,
    \End(E) \otimes \wtilde\Omega^1)$ can be seen as a choice of differential
    forms $\omega_1, \dots, \omega_n \in \Gamma(X, (\wtilde\Omega^1)^n)$, and the
    condition $\theta \wedge \theta = 0$ translates to 
    \[
        \omega_i \wedge \omega_j = 0,
    \]
    ie, the sections \emph{commute}.  Since $\wtilde\Omega^1$ is
    \emph{dualizable}, we can also write the Higgs field as
    \[
        \theta \colon \wtilde{T}_X \to \End(E),
        \qquad \text{or even } 
        \theta \colon \wtilde{T}_X \otimes {E} \to {E},
    \]
    where $\wtilde{T}_X$ is the twisted tangent bundle.  The condition above
    then translates to commutativity of the image, that is, $[\theta,\theta] =
    0$; or equivalently that it extends to a morphism from the free commutative
    algebra sheaf $\CSym \wtilde{T}_X \to \End(E)$.

    We conclude that Higgs bundles are the same thing as 
    modules over $\CSym \wtilde{T}_X$ whose underlying sheaf
    is a vector bundle.
    We will return to this point later when introducing 
    derived Higgs bundles.
\end{remark}

Given a Higgs bundle $({E}, \theta)$, we can deduce a morphism
\begin{equation*}
    \theta_2\colon {E} \otimes \wtilde\Omega^1 
    \xrightarrow{\theta \otimes 1}
    {E} \otimes \wtilde\Omega^1 \otimes \wtilde\Omega^1 
    \xrightarrow{\wedge}
    {E} \otimes \wtilde\Omega^2.
\end{equation*}
using the wedge product on differential forms.  We observe that the condition
$\theta_2 \circ \theta = 0$ is the condition for a Higgs field.  This process
naturally extends to higher forms.

\begin{definition}
    Let $({E}, \theta)$ be a {Higgs bundle}.  We define the \emph{Higgs
    complex} (or \emph{Dolbeaut complex}) of $E$ to be
     \begin{equation*}
        \sh{A}(X,{E}) = \left[
        \sh{E} \xrightarrow{\theta} {E} \otimes \wtilde\Omega^1 
        \xrightarrow{\theta_2} {E} \otimes \wtilde\Omega^2
        \xrightarrow{\theta_3} \dots \right]
        \in \icat{D}(X_\et).
    \end{equation*}
    The cohomology of this complex $R\Gamma_{\Dol}(E) =
    R\Gamma_{\et}(\sh{A}(X,{E}))$ is called the \emph{Dolbeaut cohomology} of
    ${E}$.
\end{definition}

\begin{remark}
    A remark on the nomenclature: if $\sh{O}_{X}$ is equipped with the $0$
    Higgs field, then its Dolbeaut complex splits
    \begin{equation*}
        \sh{A}(X, \sh{O}_{X}) = 
            \bigoplus_i \wtilde\Omega^i[-i]
    \end{equation*}
    and hence its (hyper)cohomology agrees with the usual definition of
    Dolbeaut cohomology of $X$.  We will later see that this remark is a main
    ingredient in proving the unipotent $p$-adic Simpson correspondence.
\end{remark}

\begin{proposition}
    The category $\Higgs(X)$ admits a canonical closed symmetric monoidal
    structure making the forgetful functor $\Higgs(X) \to \VB(X_\et)$ strong
    monoidal.  This category is then \emph{rigid}: every Higgs bundle is
    dualizable.
\end{proposition}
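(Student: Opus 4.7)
The strategy is to exploit the description in the preceding remark identifying $\Higgs(X)$ with the full subcategory of $\CSym \wtilde T_X$-modules whose underlying $\sh{O}_X$-module is a vector bundle. The key observation is that $\CSym \wtilde T_X$ carries a canonical cocommutative Hopf algebra structure over $\sh{O}_X$ in which $\wtilde T_X$ sits as primitives, with comultiplication $\Delta(v) = v \otimes 1 + 1 \otimes v$ and antipode $S(v) = -v$ on generators. Modules over any cocommutative Hopf algebra carry a canonical symmetric monoidal closed structure for which the forgetful functor to $\sh{O}_X$-modules is strong monoidal, and for which dualizable modules are rigid; the plan is therefore to reduce the proposition to checking that this structure preserves the vector bundle condition.

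Unwinding the abstract description, I would define the tensor product of $({E}, \theta_E)$ and $({F}, \theta_F)$ as ${E} \otimes_{\sh{O}_X} {F}$ endowed with the Higgs field $\theta_E \otimes 1 + 1 \otimes \theta_F$, the unit as $(\sh{O}_X, 0)$, the dual as $E^\vee = \shHom(E, \sh{O}_X)$ with field $-\theta_E^\vee$, and the internal Hom $\shHom(E, F)$ with the commutator field $\phi \mapsto \theta_F \circ \phi - \phi \circ \theta_E$. Strong monoidality of the forgetful functor is then tautological, and the associativity, unit, symmetry, evaluation, and coevaluation morphisms are taken directly from $\VB(X_\et)$; one only needs to verify that they commute with the newly defined Higgs fields.

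The main content is the verification that each of these operations preserves the integrability condition $\theta \wedge \theta = 0$. Working locally with a frame $(\omega_i)$ of $\wtilde\Omega^1$ and writing $\theta_E = \sum_i A_i \otimes \omega_i$, $\theta_F = \sum_i B_i \otimes \omega_i$, this condition becomes the pairwise commutation $[A_i, A_j] = [B_i, B_j] = 0$. For the tensor product the components are $A_i \otimes 1 + 1 \otimes B_i$, and the only cross commutators $[A_i \otimes 1, 1 \otimes B_j]$ vanish tautologically; for the internal Hom the components act on $\phi$ as $B_i \phi - \phi A_i$, and a short Jacobi-type computation shows that they also mutually commute; for the dual the components $-A_i^\vee$ commute since $[A_i^\vee, A_j^\vee] = -[A_i, A_j]^\vee$. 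The usual evaluation and coevaluation maps then intertwine these fields by a direct check.

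No step presents a serious obstacle: the content is a bookkeeping exercise reflecting the cocommutative Hopf algebra structure on $\CSym \wtilde T_X$, combined with the elementary fact that the tensor product, internal Hom, and dual of vector bundles are again vector bundles.
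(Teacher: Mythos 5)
Your proposal is correct and follows essentially the same route as the paper: the same Leibniz-rule tensor product, the same unit $(\sh{O}_X,0)$, the same commutator field on $\shHom(E,F)$, and the same identification of the dual as $\shHom(E,\sh{O}_X)$, with rigidity inherited from $\VB(X_\et)$. The only differences are cosmetic --- you spell out the local commutator verification that the paper dismisses with ``noting that it squares to zero,'' and you organize the construction around the cocommutative Hopf structure on $\CSym\wtilde{T}_X$, which is precisely the viewpoint the paper itself adopts later when treating the symmetric monoidal structure on derived Higgs bundles.
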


\begin{proof}
    [Proof/Def]
    If $\sh{F}$ and $\sh{G}$ are Higgs bundles 
    with fields both denoted $\theta$, 
    then the tensor product $\sh{F} \otimes \sh{G}$ becomes 
    a Higgs bundle using the Leibniz rule
    \begin{equation*}
        \theta(v \otimes w) = \theta(v) \otimes w + v \otimes \theta(w),
    \end{equation*}
    noting that it squares to zero.
    The unit for this monoidal structure is just the vector bundle
    $\sh{O}_{X}$, with the zero Higgs field.

Similarly, this monoidal structure is closed, with internal hom $\shHom(\sh{F}, \sh{G})$
and underlying Higgs field 
$\theta(f) = \theta f - f \theta$.
Note that 
\begin{equation*}
    \Hom_\Higgs(\sh{O}_{X}, \sh{F}) = 
    \ker (\Gamma(X, \sh{F}) 
        \xrightarrow{\theta} \Gamma(X, \sh{F} \otimes \Omega^1) ) = 
    \HH^0_{\Dol}(\shHom(X, \sh{F})).
\end{equation*}

We also note that as usual for closed symmetric monoidal categories,
the dual of some Higgs bundle $E$ has to be 
$   
    E^\vee = \shHom(E, \sh{O}_{X}).
$
That every Higgs bundle is dualizable follows from the
above construction and the rigidity of $\VB(X_\et)$.
\end{proof}

\subsection{Derived Higgs bundles}
\label{sec::derived_higgs}
We also discuss the notion of a derived  Higgs bundle (called Higgs perfect
complex in \cite{anschutz-heuer-lebras2023htstacks}).  Informally, we can think
of these objects as perfect complexes on $T_X^*$ which such that the pushfoward
$\pi_*$ is perfect on $X$.  All sheaves and perfect objects are taken with
regards to the \emph{analytic} topology for convenience\footnote{
    But as we will see in Proposition
    \ref{prop::higgs_descent},
    one could in principle define derived Higgs bundles 
    for the \'etale topology instead.
}.

In this section we denote by $H_X = \CSym \wtilde{T}_X$ denotes the classical
free commutative ``symmetric'' algebra  on the twisted tangent bundle.Locally, when $X = \Spa(A,A^+)$, $H_X$ corresponds to
$H_A$, the $A$-algebra
\[
    H_A = \CSym_A \wtilde\Omega_A^{\vee} = 
        \bigoplus_{n=0}^\infty (\wtilde\Omega^{\vee \otimes  n}_A)_{\Sigma_n}
\]
Locally, when $X$ admits an \'etale map to a torus 
$\Spa(C\langle T_1 \dots T_n \rangle)$, $H_A$ is further isomorphic to
$H_A = A[T_1, \dots, T_n]$.

We can see $H_X$ as an analytic sheaf of rings in $X$. Sheaves of $H_X$-modules
can be seen as usual as $H_X$-modules in $\icat{D}(X) = \icat{D}(\sh{O}_{X})$
via its $\sh{O}_{X}$-algebra structure. We have a forgetful functor
\[
    \pi_* \colon \icat{D}(H_X) = \Mod{H_X}(\icat{D}(X)) \to
        \icat{D}(X)     
\]
which we denote by $\pi_*$ for a more geometric intuition. Using the theory of
quasi-coherent sheaves on rigid-analytic spaces, we can also see $H_X$ as an
algebra inside of $\icat{D}_\qc(X)$, and $\icat{D}(H_X) \subset
\Mod{H_X}(\icat{D}_\qc(X))$.

\begin{definition}
    Let $X$ be a smooth rigid-analytic variety over $C$.
    The category of \emph{derived Higgs bundles} 
    is defined to be the full subcategory 
    \[
        \DHiggs(X) =  \DPerf(H_X) \times_{\icat{D}(\sh{O}_{X})} 
            \DPerf(\sh{O}_{X})  
        \subset \DPerf(H_X)
    \]
    of $\DPerf(H_X)$ consisting of all objects which are already
    perfect over $\sh{O}_{X}$, meaning all objects $\sh{E}$ such that
    $\pi_* \sh{E}$ lies in $\DPerf(X) \subset \icat{D}(X)$.
    An object in $\DHiggs(X)$ is called a
    \emph{derived Higgs bundle}.
\end{definition}

\begin{remark}
    Since the forgetful functor $\pi_* \colon \DPerf(H_X) \to \icat{D}(X)$ 
    is an exact functor between stable $\infty$-categories,
    $\DHiggs(X)$ is also stable.
    That is, $\DHiggs(X)$ is stable under
    (co)fiber and shifts as a subcategory of $\DPerf(H_X)$.
\end{remark}

In other to be able to work with and justify this definition, we need to make
sure that Higgs bundles are derived Higgs bundles.  In order to do so, we
recall the descent result for perfect objects proven in
\cite[Thm.~1.4]{gregsthesis}.  More precisely, the functor
\[
    U \mapsto \DPerf(A),
\]
where $U = \Spa(A,A^+)$ is an affinoid of $X$, is an analytic sheaf of
$\infty$-categories.  In particular, we conclude that if $X = \Spa(A,A^+)$ is
affinoid then the canonical functor 
\[
    \DPerf(A) \reqv \DPerf(X)
\]
is an equivalence.

\begin{proposition}
    Let $X$ be a smooth rigid-analytic variety over $C$.
    Then there is a fully faithful functor
    \[
        \Higgs(X) \inj \DHiggs(X)
    \]
    which agrees with $\VB(X) \inj \Perf(X)$ on 
    the underlying $\sh{O}_{X}$-modules.
\end{proposition}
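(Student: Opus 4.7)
The plan is to interpret a classical Higgs bundle $(\sh{E}, \theta)$ as a discrete $H_X$-module via the remark after Definition~\ref{def::higgs} which identifies $\Higgs(X)$ with the category of $H_X$-modules whose underlying $\sh{O}_X$-module is a vector bundle. This yields a functor $\Higgs(X) \to \icat{D}(H_X)$ landing in the heart of the standard $t$-structure. The real content is to show that the image lies in $\DHiggs(X) \subset \DPerf(H_X)$: perfection over $\sh{O}_X$ is automatic from $\sh{E}$ being a vector bundle, so the essential task is perfection over $H_X$. Once that is in place, fully-faithfulness is formal, since for two discrete $H_X$-modules the derived $\Hom$ in $\DPerf(H_X)$ recovers the classical $\Hom$, which by the identification is exactly the $\Hom$ in $\Higgs(X)$.

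Perfection over $H_X$ is a local question by the analytic descent for perfect complexes cited just before the proposition, so I would reduce to an affinoid $U = \Spa(A, A^+)$ admitting an \'etale map to a torus $\Spa C\langle T_1, \ldots, T_n\rangle$; such opens cover any smooth rigid-analytic variety. Over such a chart, $H_A \cong A[T_1, \ldots, T_n]$ and $E = \sh{E}(U)$ is a finite projective $A$-module equipped with pairwise commuting $A$-linear endomorphisms $\theta_1, \ldots, \theta_n$ encoding the Higgs field.

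The key step is the Koszul resolution of the diagonal. Since $(T_i \otimes 1 - 1 \otimes T_i)_{i=1}^n$ is a regular sequence in the commutative ring $H_A \otimes_A H_A$, its Koszul complex is a finite free resolution of $H_A$ as an $H_A \otimes_A H_A$-module (via the multiplication map). Tensoring on the right with $E$ over $H_A$ yields
\[
    0 \to H_A \otimes_A E \otimes_A \bigwedge^n A^n \to \cdots \to H_A \otimes_A E \otimes_A A^n \to H_A \otimes_A E \to E \to 0,
\]
a bounded resolution of $E$ by finite free $H_A$-modules, with differentials involving the commuting operators $T_i \otimes 1 - 1 \otimes \theta_i$ (the right $H_A$-action on $E$ being given by $\theta$). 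This exhibits $E$ as perfect over $H_A$.

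The main obstacle is this local Koszul computation — checking the regularity of the diagonal sequence and exactness after tensoring with $E$ — together with confirming that the construction is sufficiently functorial to glue into a genuinely global object of $\DPerf(H_X)$, which follows from the cited analytic descent applied inside $H_X$-modules. The remaining steps, including fully-faithfulness, are formal consequences of working in the heart of the $t$-structure on $\icat{D}(H_X)$.
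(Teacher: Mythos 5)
Your proposal is correct and follows essentially the same route as the paper: reduce by descent to an affinoid chart with $H_A \cong A[T_1,\dots,T_n]$ and resolve $E$ by free $H_A$-modules via the Koszul complex of the diagonal, i.e.\ the operators $T_i \otimes 1 - 1 \otimes \theta_i$. The paper packages this as a one-variable fiber sequence (its Lemma~\ref{lemma::fiber_seq}, a variant of the cited Anschütz--Heuer--Le~Bras lemma) followed by induction on $n$, which is just your full Koszul resolution assembled one variable at a time; the regularity/exactness check you flag as the main obstacle is precisely the content of that lemma.
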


\begin{proof}
    The data of a Higgs bundle is, as pointed out in the remark after
    Def.~\ref{def::higgs}, just an $H_X$-module $E$ which is a vector bundle
    over $\sh{O}_{X}$.  Hence we need to check that the $H_X$-module structure
    induced from $E$ is already perfect over $H_X$.  (We also remark that there
    is no reason for $E$ to have tor amplitude $0$ as an $H_X$-module.)

    Now, this is a local problem so we may assume $X$ to be affinoid, of the
    form $X = \Spa(A,A^+)$ and $\wtilde\Omega^1_X \isoto \sh{O}_X[T_1, \dots,
    T_n]$ to be polynomial.  Then $\Higgs(X)$ can be identified with the
    category of pairs $(E,\theta)$ with $E \in \VB(A)$ and $A$ and a Higgs
    field $\theta \colon E \to \Omega^1_A$.  This is then a problem on the
    underlying rings, and we reduce to the following lemma, which is a
    variation of  \cite[Lemma~6.17]{anschutz-heuer-lebras2023htstacks} that
    works in our situation. 
\end{proof}

\begin{lemma}
    \label{lemma::fiber_seq}
    Let $X$ be a rigid-analytic affinoid variety over $C$, and $\sh{A}$ a
    quasi-coherent $\sh{O}_{X}$-algebra.  Let also $H = \sh{A}[T]$ is the free
    quasi-coherent $\sh{O}_{X}$-algebra on an element $T$.  Let
    $\icat{D}_\qc(H) = \Mod{H}(\icat{D}_\qc(X))$ and $\pi_* \colon
    \icat{D}_\qc(H) \to \icat{D}_\qc(X)$ be the forgetful functor. It has a
    left adjoint $\pi^* \colon \icat{D}_\qc(X) \to \icat{D}_\qc(H)$ given by
    tensoring with $H$. 

    Then every $\sh{E} \in \icat{D}_\qc(H)$ gives us a fiber sequence
    \[
        \begin{tikzcd}
            \pi_*\sh{E} \otimes^\LL_{\sh{O}_{X}} H 
                \ar[rr, "T \otimes 1 - 1 \otimes T"] &&
            \pi_*\sh{E} \otimes^\LL_{\sh{O}_{X}} H 
                \ar[r] &
            \sh{E}
        \end{tikzcd}
    \]
    where the second map is the counit.

    In particular, by induction and the fact that pulling back preserves
    perfectness, if $\sh{E}$ is perfect over $\sh{O}_X$, then it is perfect
    over $\sh{O}_X[T_1, \dots, T_n]$ for all $n$.
\end{lemma}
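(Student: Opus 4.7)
The plan is to deduce the fiber sequence from a universal bar-style resolution of $H$ as an $H$-bimodule, and then obtain the perfectness statement as a formal consequence.

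First I would observe that $H \otimes_{\sh{A}} H \cong \sh{A}[T_1, T_2]$, where $T_1 = T \otimes 1$ and $T_2 = 1 \otimes T$. The multiplication map $\mu \colon H \otimes_{\sh{A}} H \to H$ has kernel generated by the single element $T_1 - T_2$, which is a non-zero-divisor in the polynomial algebra $\sh{A}[T_1, T_2]$ independently of $\sh{A}$. This produces a short exact sequence of $H$-bimodules
\begin{equation*}
    0 \to H \otimes_{\sh{A}} H \xrightarrow{T_1 - T_2} H \otimes_{\sh{A}} H \xrightarrow{\mu} H \to 0,
\end{equation*}
which, viewed as a sequence of right $H$-modules, is a two-term free resolution of $H$ over itself.

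Next, I would tensor this resolution with $\sh{E}$ on the right over $H$. The identifications $(H \otimes_{\sh{A}} H) \otimes_H^\LL \sh{E} \simeq H \otimes_{\sh{A}}^\LL \pi_*\sh{E}$ (the right-hand $H$ factor collapses against $\sh{E}$) and $H \otimes_H^\LL \sh{E} \simeq \sh{E}$ yield the desired fiber sequence. Under these identifications the map $T_1 - T_2$ becomes $T \otimes 1 - 1 \otimes T$, where the first $T$ denotes the action of $T$ on $\sh{E}$ transported through $\pi_*$ and the second denotes multiplication by $T$ in $H$; the right-most map becomes the action morphism of $\sh{E}$, which is precisely the counit of the adjunction $\pi^* \dashv \pi_*$.

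For the perfectness statement, base change along $\sh{A} \to H$ preserves perfectness, so $\pi_*\sh{E} \otimes_{\sh{A}}^\LL H$ is perfect over $H$ whenever $\pi_*\sh{E}$ is perfect over $\sh{A}$. The fiber sequence above then exhibits $\sh{E}$ as the fiber of a morphism between two perfect $H$-modules, hence $\sh{E}$ is itself perfect over $H$. The statement for $\sh{O}_X[T_1, \ldots, T_n]$ follows by induction on $n$, applied step-by-step along the tower $\sh{O}_X \to \sh{O}_X[T_1] \to \cdots \to \sh{O}_X[T_1, \ldots, T_n]$. The only technical point throughout is that $T_1 - T_2$ must act as a non-zero-divisor, which is immediate from ordinary commutative algebra; no subtleties arise at the derived level since the Koszul-type resolution is already strict.
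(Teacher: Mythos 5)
Your argument is correct, and it reaches the same mathematical core as the paper's proof by a differently organized route. The paper reduces to the case $\sh{E} = H$: it uses that $\icat{D}_\qc(H)$ is generated under colimits and shifts by the free modules $\pi^*\sh{M}[S]$ (for $S$ extremally disconnected), observes that the sequence for such a generator is the sequence for $H$ tensored with it, and then cites the reference for the remaining check --- which is precisely your short exact sequence $0 \to H\otimes_{\sh{A}}H \to H\otimes_{\sh{A}}H \to H \to 0$. You instead promote that check to a statement about $H$ as an $H$-bimodule and deduce the general case by applying the exact functor $-\otimes_H^{\LL}\sh{E}$; this buys naturality in $\sh{E}$ for free and dispenses with the generation argument entirely, at the cost of having to track the bimodule structure yourself. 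Two small points. First, since $\icat{D}_\qc(X)$ is a category of solid quasi-coherent modules rather than ordinary ones, the identifications $H\otimes_{\sh{A}}H \cong \sh{A}[T_1,T_2]$ and $(H\otimes_{\sh{A}}H)\otimes_H^{\LL}\sh{E}\simeq \pi^*\pi_*\sh{E}$ deserve a word: they hold because $H=\bigoplus_n \sh{A}\cdot T^n$ is a countable direct sum of copies of $\sh{A}$, the tensor product commutes with colimits in each variable, and the resolution is termwise free as a right $H$-module, so the derived tensor agrees with the naive one. Second, in the perfectness step $\sh{E}$ is the \emph{cofiber} (not the fiber) of $T\otimes 1 - 1\otimes T$; this is harmless, as perfect objects form a stable subcategory closed under cofibers and shifts, but the wording should be corrected.
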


\begin{proof}
    Let $X = \Spa(A,A^+)$ and $\sh{M}$ the measures of the underlying analytic
    ring.  Since $\icat{D}_\qc(X)$ is generated by colimits and shifts by
    $\sh{M}[S]$, for $S$ extremelly disconnected. Therefore The category
    $\icat{D}_\qc(H) = \Mod{H}(\icat{D}_\qc(X))$ is generated (under colimits
    and shifts) by $\pi^* \sh{M}[S] = \sh{M}[S] \otimes_A^L H$, so we may
    assume that $\sh{E}$ is of this form. But the sequence for $\pi^*\sh{M}[S]$
    is just the sequence for $\pi^*\sh{M}[*] = H$ tensored over $H$ with the
    $\pi^*{M}[S]$, so we can further reduce to the case of $\sh{E} = H$.  Now
    it is a mere check, as in
    \cite[Lemma~6.17]{anschutz-heuer-lebras2023htstacks}.
\end{proof}

\begin{corollary}
    \label{cor::quasi-coherent_higgs}
    Let $X = \Spa(A,A^+)$ be a smooth affinoid rigid-analytic variety over $C$
    ith trivial cotangent bundle (and hence $H_X = \sh{O}_{X}[T_1, \dots, T_n]$).
    Then the category of derived Higgs bundles can be identified with
    \[
        \DHiggs(X) = \DPerf(H_A) \times_{\icat{D}(A)} \DPerf(A).
    \]

\end{corollary}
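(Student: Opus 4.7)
The plan is to combine the descent equivalence $\DPerf(A) \reqv \DPerf(X)$ cited just above the corollary with Lemma~\ref{lemma::fiber_seq}, iterated in the $n$ variables $T_1, \dots, T_n$.

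First, I would observe that both sides of the claimed equivalence can be described intrinsically as perfect complexes (over $A$ or $\sh{O}_{X}$) equipped with an $H_A$-action, respectively an $H_X$-action, with no separate perfectness hypothesis over the algebra needed. For the right-hand side, given a perfect $A$-complex $M$ with an $H_A$-action, Lemma~\ref{lemma::fiber_seq} exhibits $M$ as the cofiber of a map between two copies of $\pi^* M = M \otimes^{\LL}_A A[T_1]$, which is perfect over $A[T_1]$ since $\pi^*$ preserves perfectness. Iterating in $T_2, \dots, T_n$ shows that $M$ is already perfect over $H_A$. The same reasoning, applied in $\icat{D}_\qc(X)$ with $\sh{A} = \sh{O}_X$, handles the sheaf version, so the fiber product $\DPerf(H_X) \times_{\icat{D}(\sh{O}_{X})} \DPerf(\sh{O}_{X}) = \DHiggs(X)$ likewise consists simply of perfect $\sh{O}_{X}$-complexes with an $H_X$-action.

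Second, I would transfer the datum across the descent equivalence. Since $H_X$ is by assumption the analytic sheaf associated to the polynomial algebra $H_A$, specifying a module structure on a perfect complex amounts to specifying $n$ pairwise commuting endomorphisms, or equivalently an $\bb{E}_\infty$-algebra map from $H_A$ into the derived endomorphism algebra of the complex. The equivalence $\DPerf(A) \reqv \DPerf(X)$ is one of symmetric monoidal stable $\infty$-categories; in particular it induces equivalences on (derived) endomorphism algebras, so algebra maps $H_A \to \End(-)$ match on both sides.

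The only real bookkeeping lies in ensuring that all higher coherences of the $H_A$-action transport faithfully through the descent equivalence. Once one adopts the $\infty$-categorical interpretation of ``$H_A$-module structure'' as an $\bb{E}_\infty$-algebra map into the endomorphism algebra, this compatibility is automatic from the fact that descent is an equivalence of symmetric monoidal $\infty$-categories, so I do not expect any deeper obstacle.
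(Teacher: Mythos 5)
Your proposal is correct, and it rests on the same two ingredients as the paper's own argument --- Lemma~\ref{lemma::fiber_seq} iterated in the variables $T_1,\dots,T_n$, and the descent equivalence $\DPerf(A)\reqv\DPerf(X)$ --- but it packages the final identification differently. The paper proves essential surjectivity of the inclusion $\DPerf(H_A)\subset\DPerf(H_X)$ directly: for $\sh{E}\in\DHiggs(X)$ the fiber sequence of the lemma writes $\sh{E}$ as an iterated cofiber of objects $\pi^*(\pi_*\sh{E})$ with $\pi_*\sh{E}$ perfect, hence (by descent) pulled back from $\DPerf(H_A)$, and one concludes by stability of the image. You instead first observe that perfectness over the polynomial algebra is automatic on both sides, so each side is the category of modules over $H_A$ (resp.\ $H_X$) with perfect underlying complex, and then transport the module structure across the symmetric monoidal descent equivalence. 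This reorganization is clean and makes functoriality transparent, but it forces you to say how $H_A$ --- which is \emph{not} a perfect $A$-complex --- corresponds to $H_X$ under an equivalence that is a priori only defined on perfect objects; this is harmless (it is a direct sum of copies of the unit, so pass to Ind-completions), but it should be said, and it is precisely the step the paper's cofiber argument avoids. Two small imprecisions to fix: the derived endomorphism algebra $\End(M)$ is only an $\bb{E}_1$-algebra, so an $H_A$-action is an $\bb{E}_1$-algebra map $H_A\to\End(M)$ rather than an $\bb{E}_\infty$-one; and ``$n$ pairwise commuting endomorphisms'' is strictly weaker data than an $H_A$-module structure in the $\infty$-categorical setting, so only the formulation via algebra maps (which you do adopt) is the one that can be transported. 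Neither point affects the validity of the argument.
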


\begin{proof}
    We have an inclusion $\DPerf(H_A) \subset \DPerf(H_X)$. Let $\sh{E}$ be
    a derived Higgs bundle on $X$, that is, suppose that $\pi_*\sh{E}$ is 
    a perfect $\sh{O}_{X}$-module. We want to show that in fact $\sh{E}$
    is in the image $\DPerf(H_A)$. By descent for perfect
    objects, we have that $\DPerf({X}) = \DPerf(A)$, and hence,
    by induction and the lemma above, we have our result.
\end{proof}

Now, our main theorem is a local unipotent Simpson correspondence.
We can extend to a stronger global correspondence using
descent for Higgs bundles.

\begin{proposition}
    \label{prop::higgs_descent}
    The association $X \mapsto \DHiggs(X)$
    can be enhanced to a functor from $X_\et^\op$
    to $\infty$-categories.
    The associated prestack (passing to the core $\DHiggs(X)^{\isoto}$)
    is a stack, that is, we have \'etale descent for 
    Higgs bundles.
\end{proposition}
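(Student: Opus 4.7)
The proof splits into two tasks: upgrading the assignment $X \mapsto \DHiggs(X)$ to a functor on $X_\et^\op$, and then verifying the sheaf property.

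For functoriality, given an étale morphism $f \colon Y \to X$ of smooth rigid-analytic varieties, the formal étaleness of $f$ implies that $f^* \Omega^1_X \reqv \Omega^1_Y$ (surjectivity from the conormal sequence, injectivity from matching ranks and local freeness). Dualizing and passing to free commutative algebras yields a canonical identification $f^* H_X \reqv H_Y$ of (analytic) sheaves of commutative $\sh{O}_Y$-algebras. Extension of scalars then gives a pullback
\[
    f^* \colon \Mod{H_X}(\icat{D}_\qc(X)) \to \Mod{H_Y}(\icat{D}_\qc(Y)),
\]
which preserves perfectness both over the structure sheaf (standard) and over $H$ (since the change-of-algebra is the base change along $f^* H_X \reqv H_Y$). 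It therefore restricts to a functor $f^* \colon \DHiggs(X) \to \DHiggs(Y)$. Assembling these pullbacks coherently into an $\infty$-functor on $X_\et^\op$ is carried out by the usual straightening of the Cartesian fibration of module categories (e.g.\ via HTT).

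For descent, let $U_\pb \to X$ be the \v{C}ech nerve of an étale cover. Since $\DHiggs$ is defined by the pullback
\[
    \DHiggs(X) = \DPerf(H_X) \times_{\icat{D}_\qc(X)} \DPerf(\sh{O}_X),
\]
and limits commute with limits in $\CAT_\infty$, it suffices to check étale descent for each of the three presheaves. Descent for $\DPerf(\sh{O}_\pb)$ is the analytic descent result \cite[Thm.~1.4]{gregsthesis}, upgraded from the analytic to the étale topology by standard local factorisation of étale maps of affinoids. Descent for $\icat{D}_\qc(\sh{O}_\pb)$ is classical (and reduces to descent for modules over a commutative ring by passing to affinoids). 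For $\DPerf(H_\pb)$, I would reinterpret it geometrically as $\DPerf$ of the relative spectrum $V_X := \SPEC_X H_X$, which is a smooth rigid-analytic space of relative dimension $\dim X$ over $X$ (the twisted cotangent bundle); an étale cover of $X$ pulls back to an étale cover of $V_X$, reducing descent for $\DPerf(H_\pb)$ to descent for $\DPerf$ on $V_X$, again by Heuer's theorem applied affinoid-locally on $V_X$. Passing to cores commutes with all of these limits, yielding the stack property.

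The main obstacle is the descent of $\DPerf(H_\pb)$, since $H_X$ is an infinitely generated polynomial $\sh{O}_X$-algebra and not itself a finite or perfect module over $\sh{O}_X$, so one cannot argue naively by faithful flatness of a single descent datum. The relative-spectrum interpretation is what makes the argument clean: it transports the question to descent for perfect complexes on the honest rigid-analytic space $V_X$, where the already cited descent theorem applies directly. The remaining compatibilities — that pullback along $f$ is symmetric monoidal, that perfectness of an $H$-module can be checked locally, that base change of algebras commutes with the constructions in play — are formal consequences of the setup.
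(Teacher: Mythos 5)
Your functoriality argument is essentially the paper's: both hinge on $f^*H_X \reqv H_Y$ for \'etale $f$ and on the compatibility of the pullback with the underlying $\sh{O}$-module, so that half is fine. The factor-by-factor descent strategy for the fiber product $\DPerf(H_\pb) \times_{\icat{D}(\sh{O}_\pb)} \DPerf(\sh{O}_\pb)$ is also a legitimate reduction (cores and Čech limits commute with pullbacks of $\infty$-categories), and the first two factors are handled correctly.

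The gap is in the third factor. Your key move is to reinterpret $\DPerf(H_\pb)$ as perfect complexes on a relative spectrum $V_X = \SPEC_X H_X$, asserted to be ``a smooth rigid-analytic space of relative dimension $\dim X$ over $X$,'' so that Heuer's descent theorem applies. But $H_X = \CSym \wtilde{T}_X$ is locally the \emph{ordinary} polynomial algebra $A[T_1,\dots,T_n]$, which is not topologically of finite type over $A$; its relative spectrum is not a rigid-analytic space, and the honest rigid-analytic (or analytified) cotangent bundle has a completed structure sheaf whose category of perfect complexes differs genuinely from $\DPerf(A[T_1,\dots,T_n])$ (already for $n=1$: global sections of the analytification are entire functions, and perfect complexes there need no global finite presentation over $A[T]$). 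The paper explicitly flags this subtlety in the remark following the proposition, where $\icat{D}_\qc(H_X)$ is only identified with modules over an \emph{analytic ring} in the sense of Clausen--Scholze, not with sheaves on a rigid space. So the cited descent theorem cannot be applied to $V_X$ as you invoke it. The paper's route around this is purely algebraic: after disposing of the analytic-topology part (perfect modules descend in any topos), one reduces via the local structure of \'etale maps to a finite \'etale map $\Spa(B,B^+) \to \Spa(A,A^+)$ with trivialized cotangent bundles, identifies $\DHiggs$ with the ring-level fiber product using Corollary \ref{cor::quasi-coherent_higgs}, and then uses ordinary \'etale descent for perfect complexes along the \'etale ring map $H_A \to H_A \otimes_A B \isoto H_B$. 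Replacing your relative-spectrum step with this reduction repairs the argument.
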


\begin{proof}
    For the functoriality, it is enough to show that if
    $f \colon Y \to X$ is an \'etale morphism of smooth rigid varieties
    then the morphism 
    \[
        f^* \colon \DPerf(H_X) \to \DPerf(H_Y)
    \]
    induced by the isomorphism
    $f^*H_X \reqv H_Y$ of $\sh{O}_{Y}$-algebras
    preserves the categories of derived Higgs bundles.
    But this is clear since the underlying
    $\sh{O}_{Y}$-module of $f^*\sh{E}$
    is the pullback functor
    $    \DPerf(Y) \to \DPerf(X)   $.

    Analytic descent for $\DHiggs(X)$ is clear by definition, since 
    perfect modules descend for every topos. We can then
    reduce the general \'etale case to $X = \Spa(A,A^+)$ and $Y =
    \Spa(B,B^+)$ with $B/A$ finite \'etale and such that the 
    cotangent bundles are trivial.  By the Corollary above,
    we reduce to usual \'etale descent for perfect
    complexes, since $H_A \to H_A \otimes_A B \isoto H_B$ is \'etale.
\end{proof}

\begin{remark}
    In usual algebraic geometry, the category of (derived) Higgs bundles
    can be identified as a subcategory of sheaves on the cotangent complex.
    In rigid geometry this is more subtle classically, as the notion of 
    affinoid morphisms is not as simple as its discrete counterpart.
    
    However, we observe that the category $\icat{D}_\qc(H_X)$,
    locally, can be identified with a category of modules over an analytic
    ring in the sense of Clausen-Scholze. Indeed, its the modules over
    the completion of $H_A$ for the canonical analytic ring structure
    in $H_A$ coming from the map $A \to H_A$ and the analytic ring structure
    of $A$. This should be identified with the category of quasi-coherent sheaves
    on the (geometric) cotangent bundle of X, and $\pi_*$, $\pi^*$ and 
    $s_*$ should be identified with their geometric counterparts.
\end{remark}

\subsubsection{The symmetric monoidal structure, and the cohomology of
Higgs bundles}
The sheaf $H_X$, seen as as object in $\DPerf(H_X)$, is \emph{not} a derived
Higgs bundle, and therefore $\DHiggs(X)$ does not inherit the canonical
symmetric monoidal structure from $\DPerf(H_X)$.
On the other hand, we have a Hopf algebra structure on $H_X$,
and therefore $\DPerf(H_X)$ admits another symmetric monoidal structure
making the forgetful functor $\pi_*$ symmetric monoidal.
Concretely, this means that $H_X$ is also a co-algebra,
with maps
\[
    H_X \to H_X \otimes H_X, \quad
    H_X \to \sh{O}_{X}
\]
which in local coordinates (fixing an \'etale map to a torus with coordinates
$T_i$) are of the form $T_i \mapsto T_i \otimes 1 + 1 \otimes T_i$ and $T_i
\mapsto 1$.  This symmetric monoidal structure then comes from the derived
category of $H_X$-modules, as the left derived functor of the usual closed
symmetric monoidal structure on $H_X$-modules. 

One also has a symmetric monoidal inclusion
\[
    s_* \colon \DPerf(X) \inj \DPerf(H_X)
\]
induced by the augmentation $s \colon H_X \to \sh{O}_{X}$ coming from the zero
map $\wtilde{T}_X \to \sh{O}_{X}$.  That is, we ``forget'' the structure of
$\sh{O}_{X}$-module to an $H_X$ module structure via $s \colon H_X \to
\sh{O}_{X}$.  Since the composite $\sh{O}_{X} \to H_X \xrightarrow{s}
\sh{O}_{X}$ is the identity, we see that the underlying $\sh{O}_{X}$-module of
$s_* \sh{E}$ is just $\sh{E}$ itself.  This is the right adjoint to the
forgetful functor $\DPerf(H_X) \to \DPerf(X)$. (Intuitively $s_* \sh{E}$ just
endows $\sh{E}$ with the zero Higgs field. Geometrically, this corresponds to
derived Higgs bundles supported on the zero section of the cotangent bundle.)

Similarly, this is a closed symmetric monoidal structure, meaning that we
have an internal hom, which also commutes with the forgetful $s_*$.
This structure now passes down to Higgs bundles.

\begin{proposition}
    Let $X$ be a smooth rigid-analytic variety over $C$.
    The category of Higgs bundles admit a closed symmetric monoidal
    structure making the forgetful funtor and the zero inclusion
    \[
        \pi_* \colon \DHiggs(X) \to \DPerf(X), \quad
        s_* \colon \DPerf(X) \inj \DHiggs(X)
    \]
    have a canonical closed symmetric monoidal functor structure.
\end{proposition}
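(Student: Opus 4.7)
The plan is to transfer the symmetric monoidal structure from $\DPerf(H_X)$ described in the paragraphs above the statement: because $H_X = \CSym \wtilde{T}_X$ is a Hopf algebra (with $\wtilde{T}_X$ primitive), the category $\Mod{H_X}$ inherits a closed symmetric monoidal structure in which the tensor product is computed over $\sh{O}_X$, with $H_X$ acting diagonally via the comultiplication, and the internal hom is $\shHom_{\sh{O}_X}$ with the conjugation action. All the content of the proposition is then that these operations descend to $\DHiggs(X)$ and interact correctly with $\pi_*$ and $s_*$.

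First I would check that the full subcategory $\DHiggs(X) \subset \DPerf(H_X)$ is closed under the tensor product. By construction of the Hopf algebra tensor, the underlying $\sh{O}_X$-module of $\sh{E} \otimes_{H_X}^{\mathrm{Hopf}} \sh{F}$ is $\pi_*\sh{E} \otimes_{\sh{O}_X}^{\LL} \pi_*\sh{F}$; since $\DPerf(\sh{O}_X)$ is closed under tensor products, perfectness over $\sh{O}_X$ is preserved. The unit is $\sh{O}_X$ with the zero Higgs field, which lies in $\DHiggs(X)$ trivially. The symmetric monoidal structure on $\pi_*$ is then essentially by definition, as is its naturality for symmetries, unitors and associators.

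Next I would treat $s_*$. Since $s \colon H_X \to \sh{O}_X$ is a morphism of Hopf algebras (the counit of the Hopf structure), the base change $s_* = s^* = (\var) \otimes_{H_X}^{\LL} \sh{O}_X$ coming from forgetting along $s$ (equivalently, equipping an $\sh{O}_X$-complex with the zero Higgs field) is symmetric monoidal. Concretely, $s_*(\sh{E} \otimes \sh{F})$ is the tensor product of underlying complexes with the zero action, which coincides with $s_*\sh{E} \otimes s_*\sh{F}$. The image lies in $\DHiggs(X)$ by definition, since $\pi_* s_* = \mathrm{id}$ on $\DPerf(X)$.

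For the closed structure one sets $\shHom_{\DHiggs}(\sh{E}, \sh{F}) := \shHom_{\sh{O}_X}(\pi_*\sh{E}, \pi_*\sh{F})$ with the conjugation $H_X$-action coming from the antipode of the Hopf algebra. Because every object of $\DPerf(\sh{O}_X)$ is dualizable, this is again perfect over $\sh{O}_X$, hence a derived Higgs bundle; the adjunction $\Hom(\sh{E} \otimes \sh{F}, \sh{G}) \simeq \Hom(\sh{E}, \shHom(\sh{F}, \sh{G}))$ is inherited from $\DPerf(H_X)$. The main (and really only) obstacle is being careful with the Hopf-algebraic bookkeeping on the derived level: one needs that the Hopf structure on the classical ring $H_X$ actually promotes $\DPerf(H_X)$ to a symmetric monoidal $\infty$-category in the required sense, so that the above structure maps assemble coherently. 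In our situation this is automatic because $H_X$ is the free $\sh{O}_X$-algebra on $\wtilde{T}_X$, so the Hopf structure is determined by the primitive elements $\wtilde{T}_X \subset H_X$ and all coherences are formal from the universal property of $\CSym$.
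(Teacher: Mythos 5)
Your proposal is correct and follows essentially the same route as the paper: the symmetric monoidal structure is the one induced on $\DPerf(H_X)$ by the Hopf algebra structure of $H_X$, and $\DHiggs(X)$ is stable under tensor and internal hom precisely because these operations commute with $\pi_*$ and $\DPerf(X)$ is closed under them. (One small slip worth fixing: $s_*$ is restriction of scalars along the counit $s\colon H_X \to \sh{O}_{X}$, not the base change $(\var)\otimes^{\LL}_{H_X}\sh{O}_{X}$ — but the functor you actually use, ``equip an $\sh{O}_{X}$-complex with the zero Higgs field,'' is the correct one.)
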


\begin{proof}
    We must show that the symmetric monoidal structure on $\Perf(H_X)$ coming
    from the Hopf algebra structure of $H_X$ preserves $\Higgs(X)$.
    But this follows from the fact that the tensor and hom commute with
    $\pi_*$ and the fact that $\Perf(X)$ is preserved under these.
\end{proof}

We can now give a more geometric explanation for the functors
$\sh{A}$ and $\RR\Gamma_\Dol$ defined in the last section.
As usual, cohomology can be understood as morphisms out of the 
tensor unit.

\begin{definition}
    Let $X$ be a smooth rigid-analytic variety over $C$.
    If $\sh{E}$ is a derived Higgs bundle on $X$, we define
    \[
        \sh{A}(X,\sh{E}) = \pi_* \RR\shHom_{H_X}(\sh{O}_{X}, \sh{E}) 
            \in \icat{D}(\sh{O}_{X}), \quad
        \RR\Gamma_{\Dol}(X,\sh{E}) = \RR\Gamma_\et \sh{A}(X, \sh{E}),
    \]
    where $\RR\shHom_{H_X}(\sh{E}, \sh{V})$ denotes the internal hom 
    computed in  $\Perf(H_X)$.

    We observe that, since $s_*$ is symmetric monoidal, the object 
    $\sh{O}_{X}$ inherts a canonical $\bb{E}_\infty$-algebra structure.
    Hence, both functors above inhert a lax-monoidal structure.
\end{definition}

\begin{proposition}
    Let $E \in \Higgs(X)$ be a Higgs bundle on $X$.
    Then the two definitions of $\sh{A}$ and $\RR\Gamma_\Dol$
    agree.
\end{proposition}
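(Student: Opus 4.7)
The strategy is to compute $\RR\shHom_{H_X}(\sh{O}_{X}, \sh{E})$ explicitly via a Koszul resolution of $\sh{O}_{X}$ as an $H_X$-module and identify the resulting complex with the Dolbeault complex from Definition 2.1.2. Since both sides are defined using the analytic topology and the question is local, I will reduce to the case $X = \Spa(A,A^+)$ with $\wtilde{T}_X$ free, say with basis dual to differentials $dt_1, \dots, dt_n$, so $H_X \isoto \sh{O}_{X}[T_1, \dots, T_n]$ and $\sh{O}_X \isoto H_X/(T_1,\dots,T_n)$ via the augmentation $s$.

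In this local picture, $\sh{O}_{X}$ admits the classical Koszul resolution
\[
    K_\bullet = H_X \otimes_{\sh{O}_{X}} \bigwedge^{\bullet} \wtilde{T}_X \reqv \sh{O}_{X},
\]
with the usual contraction differential (this is an iterated application of Lemma \ref{lemma::fiber_seq} and induction on $n$, or equivalently just a direct Koszul computation). Since each $K_i$ is free of finite rank as an $H_X$-module, applying $\shHom_{H_X}(-,\sh{E})$ computes $\RR\shHom_{H_X}(\sh{O}_{X}, \sh{E})$, and a straightforward check identifies it with the complex
\[
    \sh{E} \xrightarrow{\theta} \sh{E} \otimes \wtilde\Omega^1
    \xrightarrow{\theta_2} \sh{E} \otimes \wtilde\Omega^2 \to \cdots
\]
placed in non-negative cohomological degrees: the $i$-th term is $\shHom_{\sh{O}_{X}}(\bigwedge^i \wtilde{T}_X, \sh{E}) = \sh{E}\otimes \wtilde\Omega^i$, and the differential induced by multiplication-by-$T_j$ in $H_X$ corresponds to the component of $\theta$ in the $dt_j$-direction, which by construction assembles to the Higgs differentials $\theta_i$ (the sign-pattern one gets from Koszul matches the wedge-product sign pattern, and the identity $\theta_2 \circ \theta = 0$ is precisely the dual of the Koszul relation $T_i T_j = T_j T_i$, i.e.\ it reflects the commutativity of the $T_i$'s exactly as in the remark after Definition \ref{def::higgs}).

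Applying $\pi_*$ (the forgetful functor to $\sh{O}_{X}$-modules) just remembers the underlying $\sh{O}_{X}$-complex, and this is precisely the complex $\sh{A}(X,\sh{E})$ of the earlier definition. The main delicate point is checking that the local identifications glue: this follows because both $\sh{A}$ (via the explicit Dolbeault definition) and $\pi_*\RR\shHom_{H_X}(\sh{O}_{X},-)$ (by construction, using the analytic sheaf structure of $H_X$ and descent for $\DPerf$) are functorial in étale maps on $X$, and on overlaps the Koszul resolutions are compatible up to canonical quasi-isomorphism. Applying $\RR\Gamma_\et$ then yields the agreement for $\RR\Gamma_{\Dol}$, and the lax-monoidal structures on both sides agree because on the Koszul side it comes from the shuffle product on $\bigwedge^\bullet \wtilde\Omega^\bullet$, which is the classical cup product defining the monoidal structure on Dolbeault complexes.
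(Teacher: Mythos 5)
Your proposal is correct and follows essentially the same route as the paper: both compute $\RR\shHom_{H_X}(\sh{O}_{X},\sh{E})$ via the Koszul resolution of the zero section $H_X \surj \sh{O}_{X}$ by the locally free $H_X$-modules $\bigwedge^i\wtilde{T}_X \otimes_{\sh{O}_{X}} H_X$ and identify the dual differentials with the Higgs differentials $\theta_i$. The only cosmetic difference is that the paper writes this resolution globally (so your local-trivialization-and-gluing step is not needed), and your extra remarks on the lax-monoidal compatibility are a welcome addition the paper leaves implicit.
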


\begin{proof}
    Since the inclusion of Higgs bundles in $\DHiggs(X)$ is symmetric monoidal,
    it suffices to prove the corollary.  Now the proof follows as in
    \cite[Cor.~6.22]{anschutz-heuer-lebras2023htstacks} from the Koszul
    resolution of the zero section $H_X \surj \sh{O}_{X}$.  This is a
    resolution
    \[
        \sh{O}_{X} \isoto
        \left[ 
            \wtilde{T}_X^d \otimes_{\sh{O}_{X}} H_X \to \dots 
            \to \wtilde{T}_X \otimes_{\sh{O}_{X}} H_X \xrightarrow{\mu} H_X
        \right]
    \]
    of $\sh{O}_{X}$ by locally free $H_X$-modules.
    Here $\wtilde{T}_X^d = (\wtilde{\Omega}^d_X)^\vee$, 
    and $\mu$ is the structure map of the $\sh{O}_{X}$-algebra $H_X$.
    %
    %

    We therefore conclude that
    $
         \RR\shHom_{H_X}(\sh{O}_{X}, E) \isoto \sh{A}(E, \theta)
    $
    by analysing the dual differentials.
    We finish the proof by taking $\RR\Gamma_\et$ on both sides.
\end{proof}

We note that the definition above gives us a canonical lax-monoidal
structure on the functors $\sh{A}$ and $\RR\Gamma_\Dol$.

\begin{corollary}
    \label{cor::A(X)_algebra}
    The lax structure endows 
    $\sh{A}(X,\sh{O}_{X})$ with an $\bb{E}_\infty$-algebra
    struture.
    By the proposition above, we can write
    \[
        \sh{A}(X,\sh{O}_{X}) = \bigoplus \wtilde\Omega^i [-i],
    \]
    and we can recognize this to be the algebra structure
    coming from the wedge products
    \[
        \wedge \colon
        \wtilde\Omega^i [-i] \otimes \wtilde\Omega^j[-j] \to
        \wtilde\Omega^{i+j} [-i-j].
    \]
\end{corollary}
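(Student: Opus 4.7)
The $\bb{E}_\infty$-algebra structure is entirely formal: since $s_*\colon\DPerf(X)\inj\DPerf(H_X)$ is symmetric monoidal (as shown in the preceding proposition), it sends the tensor unit $\sh{O}_X\in\DPerf(X)$ to the tensor unit of $\DPerf(H_X)$ (equipped with its Hopf-algebra monoidal structure). The internal endomorphism object $\RR\shHom_{H_X}(\sh{O}_X,\sh{O}_X)$ of the unit is therefore an $\bb{E}_\infty$-algebra in $\DPerf(H_X)$, and since $\pi_*$ is lax symmetric monoidal (being the right adjoint to the symmetric monoidal $\pi^*$), its image $\sh{A}(X,\sh{O}_X)=\pi_*\RR\shHom_{H_X}(\sh{O}_X,\sh{O}_X)$ inherits a natural $\bb{E}_\infty$-algebra structure.

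For the computation of the underlying object, I would simply reuse the Koszul resolution already invoked in the previous proposition: the decomposition
\[
    \sh{A}(X,\sh{O}_X) \;\simeq\; \bigoplus_i \wtilde\Omega^i[-i]
\]
follows by applying $\RR\shHom_{H_X}(-,\sh{O}_X)$ to
\[
    \left[\wtilde T_X^d\otimes_{\sh{O}_X} H_X \to \dots \to \wtilde T_X\otimes_{\sh{O}_X} H_X \xrightarrow{\mu} H_X\right]
\]
and observing that all differentials become zero after dualising (the boundary maps in the Koszul complex are $H_X$-linear but not $\sh{O}_X$-linear in a way compatible with the zero augmentation, so after applying $\shHom_{H_X}(-,\sh{O}_X)$ they kill the $T_i$-factors).

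The only real content is the identification of the multiplication with the wedge product; this is a local (étale) computation, so by the descent properties and by working on an affinoid $X=\Spa(A,A^+)$ admitting an étale map to a torus, I can assume $H_X=\sh{O}_X[T_1,\dots,T_n]$, so that $\wtilde T_X$ and $\wtilde\Omega^1$ are free. Then the Koszul complex $K_\bullet=\bigwedge^\bullet(\wtilde T_X)\otimes_{\sh{O}_X} H_X$ with the usual contraction differential is naturally a commutative DG-algebra over $H_X$ (the ``Koszul DGA''), and it is a resolution of $\sh{O}_X$ by the $H_X$-free summands described above. Consequently $\RR\shHom_{H_X}(\sh{O}_X,\sh{O}_X)$ is computed by the DGA $\shHom_{H_X}(K_\bullet,\sh{O}_X)$, which is isomorphic (as a DGA) to the exterior algebra $\bigwedge^\bullet \wtilde\Omega^1$ placed in cohomological degrees $i$ with zero differential; the multiplication on this model is tautologically the wedge product.

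The main obstacle is therefore Step~3: comparing the abstractly defined $\bb{E}_\infty$-structure (via internal hom of the unit under $s_*$) with the concrete DGA structure coming from the Koszul resolution. The clean way to finish is to observe that both multiplications are characterised by how they act on $\HH^1(\sh{A}(X,\sh{O}_X))=\wtilde\Omega^1$ — this uniqueness reduces the identification to a question on cohomology of degree $\leq 2$, where the wedge product and Koszul-dual multiplication visibly agree. Finally, taking $\RR\Gamma_\et$ on both sides produces the claimed statement globally, since $\RR\Gamma_\et$ is lax monoidal and compatible with the local identification by analytic descent of both structures.
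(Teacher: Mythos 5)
Your argument is correct and follows essentially the route the paper intends: the $\bb{E}_\infty$-structure comes from the lax-monoidal structure of $\pi_*\RR\shHom_{H_X}(\sh{O}_{X},-)$ applied to the unit, and the underlying object and its product are read off from the Koszul resolution, exactly as in the proposition preceding the corollary (the paper itself leaves the wedge-product identification as a bare assertion, so your Step~3 is the only place where you add real content). The one caution is that your reduction ``to cohomology of degree $\leq 2$'' implicitly requires that the algebra is generated in degree one, i.e.\ that the iterated products surject onto every $\wtilde\Omega^n$; this is supplied by your explicit Koszul-dual DGA model (where the multiplication is the wedge on the nose), and it is the same mechanism the paper invokes later via Lemma \ref{lemma::coconnective_illusie} and the explicit antisymmetrized description of maps out of $\Sym\wtilde\Omega^1[-1]$.
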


\subsection{Unipotent Higgs bundles}
In this section we analyse the unipotent (and derived unipotent) Higgs bundles.
We note that this definition makes use of only the geometry of our space, and
it is remarkable that it will turn out to depend only on the \'etale homotopy
type (or more precisely the \'etale fundamental groupoid) of our space.

We finish this section by relating unipotent Higgs bundles with modules over
the $\bb{E}_\infty$-algebra $\sh{A}(X,\sh{O}_{X})$.

\begin{definition}
    \label{def::unipotent}
    The \emph{category of unipotent Higgs bundles}, denoted $\Higgs(X)^\uni$,
    is the smallest full subcategory of $\Higgs(X)$ containing 
    $\sh{O}_{X}$ and closed under extensions.
    The category of \emph{derived unipotent} Higgs bundles
    $\DHiggs(X)^\uni$
    is smallest stable subcategory of $\DHiggs(X)$ spanned by the unit.
 
    A Higgs bundle in $\Higgs(X)^ \uni$ is said to be a \emph{unipotent}.
    That is, $\sh{E}$ is unipotent if there exists a filtration 
    \begin{equation*}
        0 \subset \sh{E}_1 \subset \sh{E}_2 \dots \subset \sh{E}_r = \sh{E}
    \end{equation*}
    by sub-Higgs bundles whose graded pieces are isomorphic to $\sh{O}_{X}$ as
    Higgs bundles.
\end{definition}

\begin{remark}
    As the quasi-pro-\'etale vector bundle case, we have an inclusion
    \[
        \Higgs(X)^\uni \subset \DHiggs(X)^\uni.
    \]
\end{remark}

Clearly, the unit $\sh{O}_{X}$ is unipotent.  Also the category of unipotent
Higgs bundles is closed under extensions in $\Higgs(X)$, and in particular
under direct sums.

\begin{remark}
    A rank one Higgs bundle that is unipotent must be isomorphic
    to the trivial Higgs bundle $\sh{O}_{X}$.
    In this sense, unipotent Higgs bundles are 
    orthogonal to Higgs line bundles.

    However, if $X$ is a smooth affinoid then any coherent sheaf with zero
    Higgs field is derived unipotent.  Indeed, by the descent results of
    \cite{gregsthesis} we can identify vector bundles and perfect complexes of
    $\sh{O}_{X}$-modules with vector bundles and perfect complexes of
    $R$-modules.  Using regularity we can now find a resolution of any vector
    bundle by free modules, which implies the result.
\end{remark}

\begin{proposition}
    \label{prop::unipotent_higgs_monoidal}
    Let $X$ be a smooth rigid-analytic variety over $C$.
    Then the category of (derived) unipotent Higgs bundles inherits 
    the closed symmetric monoidal structure from (derived) Higgs bundles.
\end{proposition}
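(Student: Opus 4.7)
The plan is to verify that both the tensor product and internal Hom preserve (derived) unipotent objects, using the inductive/stable structure of the unipotent subcategory together with the fact that the unit absorbs both operations trivially. I will treat the non-derived and derived statements in parallel, switching ``extensions'' with ``fiber sequences'' as appropriate.

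First I would prove closure under tensor product. Fix a unipotent (resp.\ derived unipotent) Higgs bundle $\sh{F}$ and consider the full subcategory
\[
    \icat{T}_{\sh{F}} = \{ \sh{E} \in \Higgs(X) \;|\; \sh{E} \otimes \sh{F} \in \Higgs(X)^\uni \}
\]
(and similarly in the derived setting). The object $\sh{O}_X$ lies in $\icat{T}_{\sh{F}}$ because $\sh{O}_X \otimes \sh{F} \cong \sh{F}$. Since $- \otimes \sh{F}$ is exact on vector bundles (and preserves fiber sequences in the derived setting because the underlying $\sh{O}_X$-module of a derived Higgs bundle is perfect, and the monoidal structure is computed over $\sh{O}_X$ with the Hopf coproduct on $H_X$), the subcategory $\icat{T}_{\sh{F}}$ is closed under extensions (resp.\ is a stable subcategory). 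By minimality of $\Higgs(X)^\uni$ (resp.\ $\DHiggs(X)^\uni$), $\icat{T}_{\sh{F}}$ contains every unipotent object, which gives closure under $\otimes$.

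Next I would handle the internal Hom. Since $\Higgs(X)$ is rigid, we have $\shHom(\sh{E}, \sh{F}) \cong \sh{E}^\vee \otimes \sh{F}$, so it is enough, by the previous step, to show that the dual of a unipotent bundle is unipotent. This I would establish by induction on the length of a filtration. If $0 \to \sh{E}_1 \to \sh{E} \to \sh{O}_X \to 0$ is an extension with $\sh{E}_1$ unipotent of smaller rank, dualizing gives an extension
\[
    0 \to \sh{O}_X \to \sh{E}^\vee \to \sh{E}_1^\vee \to 0,
\]
and $\sh{E}_1^\vee$ is unipotent by induction, so $\sh{E}^\vee$ is as well. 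In the derived setting, one runs the same induction on a presentation of a derived unipotent object by iterated fibers and shifts of the unit, using that the unit is self-dual and that duality swaps fiber sequences with cofiber sequences up to a shift. Alternatively, one can again consider the full stable subcategory of objects whose dual is derived unipotent: it contains the unit and is closed under shifts and fibers, hence equals $\DHiggs(X)^\uni$.

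The only point that requires any care is checking that $\shHom_{H_X}(\sh{E}, \sh{F})$ stays inside $\DHiggs(X)$ (i.e.\ remains perfect over $\sh{O}_X$) when $\sh{E}, \sh{F}$ are derived unipotent; but this is automatic from the reduction to the unit, since $\shHom_{H_X}(\sh{O}_X, \sh{O}_X) = \sh{A}(X, \sh{O}_X) = \bigoplus_i \wtilde\Omega^i[-i]$ is perfect on $\sh{O}_X$ by smoothness, and closure under shifts and fibers in $\DPerf(H_X)$ then propagates perfection over $\sh{O}_X$. Combining these, the closed symmetric monoidal structure on $\DHiggs(X)$ restricts to one on $\DHiggs(X)^\uni$, and likewise at the level of abelian categories, making $\pi_*$ and $s_*$ symmetric monoidal on the unipotent subcategories by restriction.
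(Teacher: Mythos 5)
Your proof is correct and takes essentially the same route as the paper: for the tensor product both arguments fix a unipotent object $E$ and observe that the subcategory of $F$ with $E\otimes F$ unipotent contains the unit and is closed under extensions (resp.\ is stable) by flatness, and for the internal Hom you merely make explicit, via rigidity and an induction showing $E^\vee$ is unipotent, what the paper disposes of by exactness of $\shHom(E,\var)$. One harmless slip: the internal hom of the Hopf-algebra monoidal structure on $\DPerf(H_X)$ satisfies $\shHom(\sh{O}_{X},\sh{O}_{X})\eqv\sh{O}_{X}$, not $\sh{A}(X,\sh{O}_{X})$ (the latter is the $H_X$-linear $\RR\shHom$), but either object is perfect over $\sh{O}_{X}$, so your perfection check still goes through.
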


\begin{proof}
    The unit $\sh{O}_{X}$ is (derived) unipotent.
    Let $E$ be a unipotent Higgs bundle 
    Consider the category of all Higgs bundles $F$ such that
    $E \otimes F$
    is unipotent.
    Then $\sh{O}_{X}$ is in this category and since 
    vector bundles are flat, and unipotent vector bundles are
    closed under extensions,
    this coincides with $\Higgs(X)^\uni$.
    The same argument works to see that the internal Hom is also
    preserved, since $\shHom(E, \var)$ is exact when $E$ is 
    a vector bundle.

    For the derived case a similar argument shows that 
    for any unipotent derived Higgs bundle $\sh{E}$ the category of
    derived Higgs bundles $\sh{F}$ such that $\sh{E} \otimes \sh{F}$
    is derived unipotent is stable, and similarly for the internal Hom.
\end{proof}

\begin{example}
    Let $X$ be a proper rigid analytic variety with $\HH^1(X,C) = 0$ (for
    example anything simply-connected such as $\bb{P}^n$ or a K3 surface).
    Then the category of unipotent Higgs bundles is trivial in the sense that
    it is equivalent to the category of finite dimensional $C$ vector spaces.

    Indeed, by the Hodge decomposition (Corollary \ref{cor::ht}) we have
    \begin{equation*}
        \HH^1(X, C) \isoto \HH^1(X, \sh{O}_{X}) 
        \oplus \HH^0(X, \wtilde\Omega^1)
    \end{equation*}
    so both groups on the right vanishes.  The first vanishing says that all
    vector bundle extensions of $\sh{O}_{X}$ by $\sh{O}_{X}$ are trivial, so if
    $\sh{E}$ is unipotent then it is isomorphic to $\sh{O}_{X}^n$ as a vector
    bundle.  The second implies that the Higgs field is zero, since there are
    no globally defined $1$-forms.
\end{example}

This example shows how unipotent Higgs bundles are intimately linked
with the fundamental group of the variety.
This motivates a Tannakian study of the category of unipotent Higgs bundles.

\begin{proposition}
    \label{prop::tannakian_proposition}
    Let $X$ be a proper, connected adic space over $C$.
    The category of unipotent Higgs bundles is an abelian, 
    rigid, tensor category.
    Fixing a point $x \in X(C)$, then we also have a canonical
    \emph{fiber functor}
    \begin{equation*}
        F_x \colon \Higgs_\uni(X) \to {C}\mhyphen\cat{Vect}
    \end{equation*}
    sending $\sh{E}$ to $\sh{E}_x$, which makes it into a 
    Tannakian category.
\end{proposition}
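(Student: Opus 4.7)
The proof amounts to verifying the conditions of a neutral Tannakian category over $C$. Several pieces are immediate or have already been established: $C$-linearity and the symmetric monoidal structure are inherited from $\Higgs(X)$, the latter by Proposition \ref{prop::unipotent_higgs_monoidal}; rigidity follows by dualizing a unipotent filtration $0 = E_0 \subset E_1 \subset \cdots \subset E_r = E$ with $E_i/E_{i-1} \simeq \sh{O}_{X}$, which produces a filtration on $E^\vee$ of the same kind; and the identity $\End_{\Higgs(X)^\uni}(\sh{O}_X) = C$ is just $\HH^0_{\Dol}(\sh{O}_X) = \HH^0(X, \sh{O}_X) = C$, using properness and connectedness of $X$. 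The fiber functor $F_x(E) = E_x$ is $C$-linear and symmetric monoidal by construction.

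The substantive content is (i) showing that $\Higgs(X)^\uni$ is \emph{abelian} (i.e.\ kernels, cokernels and images of morphisms of unipotent Higgs bundles exist as unipotent Higgs bundles), and (ii) verifying that $F_x$ is faithful and exact. My plan for (i) is induction on $\rk E + \rk F$ for a morphism $f \colon E \to F$. In the base case both are $\sh{O}_X$ and $f \in \End(\sh{O}_X) = C$, which is either zero or invertible; in either case the kernel and cokernel are $0$ or $\sh{O}_X$. For the inductive step, pick filtrations $E_{r-1} \subset E$ and $F_{s-1} \subset F$ with trivial top quotients and consider the induced scalar $\bar f \colon E/E_{r-1} \to F/F_{s-1}$. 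If $\bar f = 0$ then $f$ factors through $F_{s-1}$ and the inductive hypothesis applies. If $\bar f \neq 0$ then $f$ admits a rank-one quotient isomorphic to $\sh{O}_X$ and one is reduced by the snake lemma to the induced morphism $E_{r-1} \to F_{s-1}$, for which the inductive hypothesis produces locally free kernel and cokernel; these are then unipotent because they inherit filtrations from the ambient objects.

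Once the abelian structure is in place, exactness of $F_x$ is automatic: kernels and cokernels in $\Higgs(X)^\uni$ are honest vector bundles, so their fibers at $x$ compute the kernel and cokernel of the linear map $f_x$. Faithfulness of $F_x$ reduces by $C$-linearity and use of internal Hom to checking that a morphism $\sh{O}_X \to F$ vanishing at $x$ is zero; this again runs by induction on $\rk F$, using that the composition $\sh{O}_X \to F \to F/F_{s-1} = \sh{O}_X$ is a scalar in $C$ (by $\End(\sh{O}_X) = C$) whose value at $x$ vanishes, hence is zero, after which the morphism factors through $F_{s-1}$ and induction applies.

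The main obstacle I expect is in step (i), specifically ensuring that $\operatorname{im}(f)$ is \emph{locally free} rather than merely coherent — a genuinely rigid-analytic issue, not automatic from the unipotent filtrations alone. The proper connected hypothesis on $X$ enters precisely here, via the scalar-valuedness of $\Hom(\sh{O}_X, \sh{O}_X)$, which forces the "leading term" of $f$ in the filtrations to be constant on $X$ and thereby controls ranks of $f$ uniformly. Once this rank-constancy is extracted, the remaining verifications are routine homological manipulations of extensions.
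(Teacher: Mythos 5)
Your overall strategy is the same as the paper's: the paper also reduces everything to the abelianness of the category of unipotent vector bundles (it cites Nori's lemma for this rather than reproving it), deduces the Higgs statement by observing that kernels and cokernels inherit Higgs fields, obtains rigidity from Proposition \ref{prop::unipotent_higgs_monoidal}, and proves exactness and faithfulness of $F_x$ by the same local-splitting and rank arguments you sketch. So the only substantive point to examine is your inductive proof of abelianness, and there the written argument has a hole.

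You ``consider the induced scalar $\bar f \colon E/E_{r-1} \to F/F_{s-1}$'', but for arbitrary choices of the filtration steps $E_{r-1}$ and $F_{s-1}$ no such map is induced, because $f$ need not carry $E_{r-1}$ into $F_{s-1}$: already for $E = F = \sh{O}_X \oplus \sh{O}_X$ with $E_1 = F_1 = \sh{O}_X \oplus 0$ and $f$ the coordinate swap, $f(E_1) \not\subset F_1$. Consequently the commutative ladder to which you want to apply the snake lemma does not exist, and the reduction to $E_{r-1} \to F_{s-1}$ fails as stated. The standard repair (and the content of Nori's argument) is to organize the induction around the composite $g \colon E \to F \to F/F_{s-1} \cong \sh{O}_X$. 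If $g = 0$ then $f$ factors through $F_{s-1}$ and one inducts on the rank of the target. If $g \neq 0$ one must first prove, by a separate induction on the rank of $E$ using $\End(\sh{O}_X) = \HH^0(X,\sh{O}_X) = C$, that a non-zero morphism from a unipotent bundle to $\sh{O}_X$ is surjective with kernel a unipotent \emph{subbundle} $K \subset E$; only then does one obtain a map of short exact sequences from $0 \to K \to E \to \sh{O}_X \to 0$ to $0 \to F_{s-1} \to F \to \sh{O}_X \to 0$ with an isomorphism on the right, and the snake lemma reduces $f$ to $K \to F_{s-1}$. This sub-lemma is precisely where the local-freeness issue you correctly flag gets resolved, and it is not subsumed by your induction as you have set it up. The remaining parts of your proposal (rigidity by dualizing filtrations, exactness and faithfulness of $F_x$) are correct and agree with the paper.
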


Properness is essential for the theorem to work.
If $s \colon \sh{O}_{X} \to \sh{O}_{X}$ is any section, 
then the kernel in the category of unipotent bundles has to be $0$ or $\sh{O}_{X}$.

\begin{lemma}
    Let $X/C$ be a proper rigid-analytic variety\footnote{
        Or more generally a locally ringed topos over a field $k$ with
        $\HH^0(X,\sh{O}_{X}) = k$.
    }.
    The category of unipotent vector bundles, ie.
    the full subcategory of $\VB(X)$ consisting of successive extensions
    of $\sh{O}_{X}$, is abelian.
\end{lemma}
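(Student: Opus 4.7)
The plan is to reduce the statement to the claim that for every morphism $f \colon E \to F$ between unipotent vector bundles, the sheaf-theoretic kernel, image, and cokernel (computed in $\sh{O}_X$-modules) are themselves unipotent vector bundles. Granted this, the additivity of $\VB(X)^\uni$ and the isomorphism $\mathrm{coim}(f) \cong \mathrm{im}(f)$ are inherited from $\sh{O}_X$-modules, so $\VB(X)^\uni$ becomes abelian. The argument proceeds by induction on $\rk(E) + \rk(F)$. The decisive input is $\End_{\sh{O}_X}(\sh{O}_X) = \HH^0(X,\sh{O}_X) = k$: this is a field, so any nonzero endomorphism of $\sh{O}_X$ is an isomorphism. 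The base case $E = F = \sh{O}_X$ follows immediately.

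For the inductive step I would fix $E_1 \cong \sh{O}_X \inj E$ with $E/E_1$ unipotent, and a filtration $0 = F_0 \subset F_1 \subset \cdots \subset F_m = F$ with $F_i/F_{i-1} \cong \sh{O}_X$, and split into cases according to whether $g := f|_{E_1}$ vanishes. If $g = 0$, then $f$ descends to $\bar f \colon E/E_1 \to F$ and an easy snake-lemma argument combined with the inductive hypothesis for $\bar f$ yields the three required unipotent vector bundles. If $g \neq 0$, I pick the minimal $k$ with $g(E_1) \subset F_k$: the composite $E_1 \to F_k \to F_k/F_{k-1} = \sh{O}_X$ is a nonzero endomorphism of $\sh{O}_X$, hence an isomorphism, which splits $F_k \cong F_{k-1} \oplus g(E_1)$ globally. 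This shows $g$ is injective and that $F/g(E_1)$ is again a unipotent vector bundle of rank $m - 1$ (its filtration coincides with that of $F$ apart from the $k$-th step, which collapses). Applying the snake lemma to the two exact rows $0 \to E_1 \to E \to E/E_1 \to 0$ and $0 \to g(E_1) \to F \to F/g(E_1) \to 0$, with leftmost vertical the isomorphism $g$, produces $\ker f \cong \ker \bar f$ and $\mathrm{coker}\, f \cong \mathrm{coker}\, \bar f$, both unipotent by induction; the image sits in a short exact sequence $0 \to g(E_1) \to \mathrm{im}(f) \to \mathrm{im}(\bar f) \to 0$ and is therefore also unipotent.

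The main obstacle is the verification, in the nonzero-$g$ case, that the quotient $F/g(E_1)$ is actually a vector bundle rather than merely a coherent sheaf. This does not follow immediately from the global splitting $F_k \cong F_{k-1} \oplus g(E_1)$, since $g(E_1)$ need not be a global direct summand of $F$. One has to combine this splitting with the local splittings of the remaining filtration steps (which hold because $F/F_k$ is locally free) to realize $g(E_1)$ as a \emph{local} direct summand of $F$, after which $F/g(E_1)$ is locally a direct sum of locally free sheaves, hence locally free. With that settled, the remainder of the argument is routine diagram chasing.
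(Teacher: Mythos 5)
Your argument is correct. The paper gives no argument of its own here --- it simply cites Nori's lemma --- and your induction on $\rk(E)+\rk(F)$, pivoting on the fact that $\Hom(\sh{O}_X,\sh{O}_X)=\HH^0(X,\sh{O}_X)=k$ is a field so that every nonzero endomorphism of the unit is an isomorphism, is exactly the standard argument that citation stands for; both cases of your inductive step (whether $f|_{E_1}$ vanishes or not) check out. One remark: the ``main obstacle'' you flag, namely that $F/g(E_1)$ is a vector bundle and not merely coherent, is automatic once you know it carries a finite filtration with graded pieces $\sh{O}_X$, because an extension of a locally free sheaf by a locally free sheaf always splits locally; so realizing $g(E_1)$ as a local direct summand of $F$ is a fine route but not a necessary extra step.
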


\begin{proof}
    The proof of 
    \cite[Chp.~4,Lemma~2]{nori1982fundamental}
    applies \textit{mutatis mutandis}.
\end{proof}

\begin{proof}
    [Proof (of Proposition
    \ref{prop::tannakian_proposition})]
    It follows straight from the lemma that the category of unipotent Higgs
    bundles is abelian, since the kernel and cokernel will have canonical Higgs
    fields.  We've already seen in proposition
    \ref{prop::unipotent_higgs_monoidal} that unipotent Higgs bundles are
    closed under the symmetric monoidal strucure.  (In particular they form a
    rigid tensor category.)

    Finally, the fiber functor is exact and faithful.  Exactness is clear since
    any exact sequence of vector bundles splits on a neighbourhood of $x$.
    Faithfulness now follows from exactness and the fact that the category of
    unipotent vector bundles on proper spaces is abelian.  Namely if a morphism
    $f \colon E \to E'$ is non-zero, its kernel is now a vector bundle of rank
    strictly less then the rank of $E$, so $f_x$ is non-zero.
\end{proof}

Using the Tannakian reconstruction theorem we get the following definition,
whose name will be justified in Corollary 
\ref{cor::uni_hull}.

\begin{definition}
    [The unipotent fundamental group]
    \label{def::uni_fun_group} Let $X$ be a proper, connected adic space over
    $C$, and fix a base point $x \in X(C)$.  Then the \emph{unipotent
    fundamental group} of $X$ at $x$ is defined to be the algebraic group
    \begin{equation*}
        \pi_1^{\uni}(X,x) = \underline{\Aut}^\otimes(F_x).
    \end{equation*}
    We have a canonical equivalence
    $
        \Higgs_\uni(X) \reqv \Rep_C(\pi_1^{\uni})
    $
    and therefore this algebraic group is indeed unipotent
    (since the regular representation is unipotent).
\end{definition}

We can now relate unipotent Higgs bundles and $\sh{A}(X,\sh{O}_{X})$-modules.

\begin{proposition}
    \label{prop::Higgs_A_mod}
    The lax-monoidal functor $\sh{A} \colon \DHiggs(X) \to
    \icat{D}(\sh{O}_{X})$ induces a strong monoidal equivalence
    \[
        \DHiggs(X)^\uni \reqv \Mod{\sh{A}(X,\sh{O}_{X})}^\uni,
    \]
    where the right hand side is seen as the stable $\infty$-category of
    modules over the $\bb{E}_\infty$-algebra $\sh{A}(X,\sh{O}_{X})$
    (Corollary \ref{cor::A(X)_algebra}).

    The full subcategory $\Higgs(X)^\uni \subset \DHiggs(X)^\uni$ is identified
    with the smallest subcategory of the right hand side which contains the
    unit and is closed under fiber sequences.
\end{proposition}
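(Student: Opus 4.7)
The plan is to follow the same pattern as Proposition \ref{prop::bundles_and_modules}, with $\sh{A}(X,\sh{O}_{X})$ playing the role that $\RR\nu_*\what{\sh{O}}_X$ played there. The functor $\sh{A}(X,-) = \pi_*\RR\shHom_{H_X}(\sh{O}_{X},-)$ is lax symmetric monoidal by Corollary \ref{cor::A(X)_algebra}, so it factors through $\sh{A}(X,\sh{O}_{X})$-modules. There are three things to verify: that the lax structure becomes strong on unipotent objects, that the resulting functor is fully faithful, and that it is essentially surjective onto the unipotent part of $\Mod{\sh{A}(X,\sh{O}_{X})}$.

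For the first point, I fix $\sh{E} \in \DHiggs(X)^\uni$ and consider the full subcategory of $\DHiggs(X)$ consisting of those $\sh{F}$ for which the lax structure map
\[
    \sh{A}(X,\sh{F}) \otimes_{\sh{A}(X,\sh{O}_{X})} \sh{A}(X,\sh{E}) \to \sh{A}(X,\sh{F} \otimes \sh{E})
\]
is an equivalence. Since $\sh{F} \mapsto \sh{A}(X,\sh{F})$ is exact and tensoring with the perfect object $\sh{E}$ is exact in $\sh{F}$, both sides are exact functors of $\sh{F}$; hence this subcategory is stable. It contains the unit (where both sides degenerate to $\sh{A}(X,\sh{E})$), so it contains all of $\DHiggs(X)^\uni$. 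This shows $\sh{A}$ is strong symmetric monoidal on $\DHiggs(X)^\uni$.

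Once strong monoidality is established, every derived unipotent object is dualizable (stable subcategories generated by a dualizable object consist of dualizable objects, and $\sh{O}_{X}$ is self-dual), so I reduce fully faithfulness to the claim that
\[
    \Hom_{\DHiggs(X)}(\sh{O}_{X}, \sh{V}) \reqv \Hom_{\Mod{\sh{A}(X,\sh{O}_{X})}}(\sh{A}(X,\sh{O}_{X}), \sh{A}(X,\sh{V}))
\]
is an equivalence for $\sh{V} \in \DHiggs(X)^\uni$. The left side equals $\RR\Gamma_\Dol(X,\sh{V}) = \RR\Gamma_\et\sh{A}(X,\sh{V})$ by definition, and the right side computes the same derived global sections of the module $\sh{A}(X,\sh{V})$, so both agree. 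Essential surjectivity is then immediate: the essential image is stable and contains the unit, hence exhausts $\Mod{\sh{A}(X,\sh{O}_{X})}^\uni$. The classical statement follows formally: under the equivalence, $\Higgs(X)^\uni$ is, by its very definition, the smallest subcategory of $\DHiggs(X)^\uni$ containing $\sh{O}_{X}$ and closed under extensions (i.e., fiber sequences between objects in the heart), and this property transports to the analogously-defined subcategory on the module side. The main subtle point to be vigilant about is that the lax monoidal structure on $\sh{A}$ truly comes from $s_*$ being symmetric monoidal in the way claimed; but this is essentially formal given the Hopf algebra structure on $H_X$ and the Koszul resolution already invoked earlier in the section.
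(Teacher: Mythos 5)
Your proposal takes essentially the same route as the paper's proof: the paper likewise establishes strong monoidality by fixing $\sh{E}$ and showing that the subcategory of objects on which the lax structure map is an equivalence is stable and contains the unit, then uses dualizability to reduce full faithfulness to maps out of the unit (with essential surjectivity following from stability of the image, as in Proposition~\ref{prop::bundles_and_modules}). The only difference is that you spell out the identification of both mapping objects with $\RR\Gamma_{\Dol}(X,\sh{V})$, a step the paper declares ``clear.''
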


\begin{proof}
    This is essentially the same proof as
    Proposition \ref{prop::bundles_and_modules}.
    Namely, we see that $\sh{A}$ is symmetric monoidal by fixing an $\sh{E}$
    and considering the subcategory of $\sh{V}$ such that the lax structure is
    an isomorphism.  Then the same argument shows this category contains to be
    stable (since we only use that tensoring and $\sh{A}$ is exact).  Now since
    every object is dualizable, fully-faithfulness of $\sh{A}$ reduces to the
    isomorphism 
    \[
        \Hom(\sh{O}_{X}, \sh{E}) \isoto 
            \Hom_{\sh{A}(X,\sh{O}_{X})}(\sh{A}(X,\sh{O}_{X}), \sh{E}).
    \]
    which is clear.
\end{proof}

\begin{remark}
    One can show, using the same proof, that
    unipotent Higgs bundles correspond to 
    $\RR\Gamma_{\Dol}(X,\sh{O}_{X})$-modules,
    that is,
    \[
        \RR\Gamma_{\Dol} \colon
    \DHiggs(X)^\uni \reqv \Mod{\RR\Gamma_{\Dol}(X,\sh{O}_{X})}^\uni
    \]
    is a symmetric monoidal equivalence.
    In fact the same methods also show that the functor
    \[
        \RR\Gamma_\et \colon 
            \Mod{\sh{A}(X,\sh{O}_{X})}^\uni \reqv 
            \Mod{\RR\Gamma_{\Dol}(X,\sh{O}_{X})}^\uni
    \]
    is an equivalence.
\end{remark}

\subsection{Locally unipotent Higgs bundles}
\label{subsection::loc_unipotent_higgs}
Unipotent Higgs bundles do not satisfy descent for the \'etale topology; for
example any line bundle is locally unipotent but $\sh{O}_{X}$ is the only
unipotent Higgs bundle of rank $1$.  A more local version of this definition is
the following.

\begin{definition}
    A Higgs bundle is said to be \emph{locally unipotent} if 
    it unipotent after pullback to an \'etale cover of $X$.
    The category of locally unipotent Higgs bundles is denote by 
    $\Higgs(X)^{\luni}$.

    Similarly a derived Higgs bundle $\sh{E}$ is said to be
    \emph{locally derived unipotent} if there exists an \'etale cover
    $f \colon Y \to X$ such that $\sh{E}_Y$ is unipotent.
    The full subcategory of $\DHiggs(X)$ generated by locally
    unipotent bundles is denoted $\DHiggs(X)^\luni$.
\end{definition}

There is a fully faithful functor
    $
    \VB(X_\an) \inj \Higgs(X)^\luni
    $ 
given by endowing a vector bundle with the zero field.  This category is
usually \emph{not abelian} (for example the morphism $\sh{O}  \to
\sh{O}(1)$ in $\bb{P}^1$ has no kernel, as the forgetful morphism would
preserve it). A class of examples, due to Simpson, are those Higgs bundles
which are fixed under the $\bb{G}_m$-action $t (E,\theta) = (E, t\theta)$,
that is, those bundles for which there is an isomorphism
\[
    f \colon (E, \theta) \reqv (E, t\theta)
\]
for some $t \in C^\times$.

\begin{proposition}
    Let $X$ be a smooth rigid-analytic space over $C$.
    The association $Y \mapsto \Higgs(Y)^\luni$
    and $Y \mapsto \DHiggs(Y)^\luni$ becomes a stack on
    the small \'etale site $X_\et$ of $X$.
    The canonical natural transformations
    \[
        \Higgs(Y)^\uni \to \Higgs(Y)^\luni, \quad
        \DHiggs(Y)^\uni \to \DHiggs(Y)^\luni, 
    \]
    identify the latter as a sheafification of the former.
\end{proposition}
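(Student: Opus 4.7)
The strategy is to bootstrap off the étale descent for (derived) Higgs bundles established in Proposition~\ref{prop::higgs_descent}: since $\Higgs$ and $\DHiggs$ are already stacks on $X_\et$, and both $\Higgs^\luni$ and $\DHiggs^\luni$ are full sub-prestacks, the stackiness of the locally unipotent variants reduces to checking closure under gluing along étale covers. The sheafification assertion then reduces to the observation that a full substack of a stack is the stackification of any sub-prestack that locally generates it.

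Before checking descent one has to make sense of the functoriality $Y \mapsto \Higgs(Y)^\luni$ (resp. the derived variant). For an étale morphism $f \colon Y' \to Y$, pullback $f^*$ on (derived) Higgs bundles is symmetric monoidal and exact, so it sends unipotent objects to unipotent objects and preserves fiber sequences and extensions. Hence if $\sh{E}$ is locally unipotent on $Y$ via a cover $\{Y_i \to Y\}$, then $f^*\sh{E}$ is locally unipotent on $Y'$ via the base-changed cover $\{Y_i \times_Y Y' \to Y'\}$, giving the desired functoriality.

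For the stack property, let $\{f_i \colon Y_i \to Y\}$ be an étale cover and consider a descent datum $(\sh{E}_i)$ with each $\sh{E}_i \in \Higgs(Y_i)^\luni$; by Proposition~\ref{prop::higgs_descent} this glues to an $\sh{E} \in \Higgs(Y)$. By hypothesis there are further étale covers $\{g_{ij} \colon Y_{ij} \to Y_i\}$ trivializing each $\sh{E}_i$ into a unipotent object, and then $\{f_i \circ g_{ij} \colon Y_{ij} \to Y\}$ is an étale cover of $Y$ on which $\sh{E}$ becomes unipotent, so $\sh{E} \in \Higgs(Y)^\luni$. The identical argument, replacing ``unipotent'' by ``derived unipotent'' throughout, yields descent for $\DHiggs^\luni$.

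Finally, since $\Higgs^\luni$ is now a stack, the natural transformation $\Higgs^\uni \to \Higgs^\luni$ factors through the stackification as a canonical map $(\Higgs^\uni)^{\#} \to \Higgs^\luni$. Essential surjectivity on sections over any $Y$ is immediate from the definition of ``locally unipotent'': such an $\sh{E}$ trivializes to unipotent objects on some cover and thus determines a section of $(\Higgs^\uni)^{\#}$. Full faithfulness holds because both $\Higgs^\uni$ and $\Higgs^\luni$ are \emph{full} subcategories of the stack $\Higgs$, so their morphism sheaves agree with those computed in $\Higgs$, and stackification preserves these sheaves of morphisms. The same argument runs verbatim in the derived setting. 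The main obstacle, such as it is, lies in organizing the refinement of covers cleanly; the content is essentially that ``locally unipotent'' was defined precisely as the sheaf-theoretic closure of ``unipotent'', and the ambient étale descent from Proposition~\ref{prop::higgs_descent} makes this formal observation rigorous.
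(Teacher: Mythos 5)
Your proof is correct and follows essentially the same route as the paper's: both use the \'etale descent for (derived) Higgs bundles from Proposition~\ref{prop::higgs_descent} to glue a descent datum of locally unipotent objects, observe that the glued object is locally unipotent by refining covers, and then identify $\Higgs(\var)^\luni$ as the sheafification using that the Hom-presheaves of the full sub-prestack $\Higgs(\var)^\uni$ are already sheaves. Your write-up is more detailed than the paper's (which verifies the universal property of sheafification directly rather than checking the comparison map is an equivalence), but the content is the same.
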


\begin{proof}
    By descent for (derived) Higgs bundles, we can see that locally (derived)
    unipotent Higgs bundles form a stack.  Indeed, a descent data along a cover
    $Y \to X$ glues uniquely to a Higgs bundle $E$ and $E_Y$ is locally
    unipotent hence so is $E$.  Any morphism from $\Higgs(\var)^\uni$ into a
    sheaf factors locally unipotent sheaves by gluing, so this is indeed the
    sheafification.
\end{proof}

Although we will not need the characterizations below, one can relate the notion of
unipotent Higgs bundles with those whose Higgs field is nilpotent.

\begin{definition}
    Let $X$ be a smooth rigid-analytic variety and let $(E, \theta)$ be a Higgs
    bundle over $X$. We say that $E$ is nilpotent if $\theta$ is, in the sense
    that the image of
    \[
        \wtilde T^1_X \to \End(E)
    \]
    lies in the nilpotent endomorphisms of $E$.
\end{definition}

Some remarks about the definition. To check this condition we can locally trivialize
$\wtilde{T}^1$ which yields us $d$ operators
\[
    \theta_1, \dots, \theta_d \colon E \to E
\]
on $E$. Then the definition above says that these operators are nilpotents.
One may worry that this depends on trivialization, but since the $\theta_i$ commute
any linear combination of the $\theta_i$ is nilpotent also. Alternatively a Higgs 
bundle is nilpotent if the composite
\[
    E \to E \otimes \Omega^1 \to \dots \to 
    E \otimes \wtilde\Omega^1 \otimes \dots \otimes \wtilde\Omega^1
\]
is eventually zero.

Being nilpontent is already local for the étale topology, since one can check
being zero on an étale cover, and the index of nilpotency cannot surpass
the rank of the bundle in question (by Nakayama's lemma and the case of fields).
Geometrically these bundles are those supported on a formal completion of the
zero section of the cotangent bundle.

\begin{proposition}
    Let $X$ be a smooth rigid-analytic space and $(E,\theta)$ be a Higgs bundle.
    Then the following hold:
\renewcommand{\theenumi}{\roman{enumi}}
    \begin{enumerate}
        \item If $E$ is locally unipotent then it is nilpotent. 
        \item If $E$ is nilpotent, then it is locally derived unipotent.
        \item If $\dim X = 1$ and $E$ is nilpontent, then it is locally unipotent.
    \end{enumerate}
\end{proposition}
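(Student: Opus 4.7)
My overall strategy would be to work étale-locally, where we may assume $X = \Spa(A, A^+)$ is a smooth affinoid with $\wtilde\Omega^1_X \cong \sh{O}_{X}^d$ trivialized, so that a Higgs bundle becomes a finitely generated projective $A$-module $E$ equipped with commuting endomorphisms $\theta_1, \ldots, \theta_d$, i.e., an $H_A = A[T_1, \ldots, T_d]$-module. For (i), since nilpotency can be checked étale-locally (as noted in the text), I would reduce to the case where $E$ is itself unipotent, with a filtration $0 = E_0 \subset E_1 \subset \cdots \subset E_r = E$ by sub-Higgs bundles whose graded pieces are $\sh{O}_{X}$ with zero field. The vanishing of the induced field on each $E_i/E_{i-1}$ forces $\theta(E_i) \subset E_{i-1} \otimes \wtilde\Omega^1$, so in a local frame adapted to the filtration every $\theta_j$ becomes strictly upper-triangular and hence nilpotent.

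For (ii), I would observe that nilpotency of the commuting $\theta_j$'s forces some power $I^N$ of the augmentation ideal $I = (T_1, \ldots, T_d) \subset H_A$ to annihilate $E$. I would then filter $E$ by the descending chain $E \supset IE \supset I^2 E \supset \cdots \supset I^N E = 0$, whose graded pieces $M_k = I^k E / I^{k+1} E$ are finitely generated $A$-modules carrying the zero Higgs field. By the remark following Definition~\ref{def::unipotent}, each $(M_k, 0)$ is derived unipotent on the smooth affinoid $X$: regularity of $A$ provides a resolution of $M_k$ by finite free $A$-modules, and the Koszul resolution of $\sh{O}_{X}$ over $H_X$ lifts this to a perfect $H_X$-resolution. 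Closure of $\DHiggs(X)^\uni$ under fiber sequences then yields $E \in \DHiggs(X)^\uni$. The main technical subtlety --- in my view the main obstacle of the whole proposition --- is verifying that each intermediate $I^k E$ already lies in $\DHiggs(X)$ (is perfect over $H_X$), before one can invoke stability of derived unipotence; I would establish this by descending induction on $k$, using closure of $\DPerf(H_X)$ under extensions together with the perfectness of each $M_k$.

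For (iii), I would further shrink the cover so that $X$ is a rigid disk (where $A$ is essentially a PID and every finite projective $A$-module is free) and $\wtilde\Omega^1 \cong \sh{O}_{X} \cdot e$, reducing $\theta$ to a single nilpotent endomorphism $f$ of $E$. I would then use the kernel filtration $E_k = \ker(f^k)$: because $E/E_k \cong \mathrm{Im}(f^k) \subset E$ is torsion-free over the 1-dimensional regular ring $A$, it is locally free, so each $E_k$ is a subbundle and each graded piece $E_{k+1}/E_k$ is a free $A$-module on which $f$ (hence $\theta$) acts as zero. Refining by any chain of sub-$\sh{O}_{X}$-modules inside each graded piece --- automatically a refinement by sub-Higgs bundles since the induced field vanishes there --- yields a filtration of $E$ with graded pieces $\sh{O}_{X}$, exhibiting local unipotence. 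The 1-dimensionality of $X$ is essential in this step: in higher dimensions $\mathrm{Im}(f^k)$ need not be locally free, which is precisely why beyond dimension one one only obtains the weaker \emph{derived} conclusion of (ii).
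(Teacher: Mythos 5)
Your proposal is correct and follows essentially the same route as the paper: (i) via upper-triangularity of $\theta$ with respect to a unipotent filtration, (ii) via a filtration with zero-field graded pieces plus the fact that, by regularity, any coherent sheaf with zero Higgs field on a smooth affinoid is derived unipotent, and (iii) via the kernel filtration together with the observation that in dimension one the relevant quotients are automatically locally free. The only (cosmetic) divergence is in (ii), where you use the descending filtration $I^kE$ by powers of the augmentation ideal while the paper iterates the common kernel $\bigcap_i \ker\theta_i$ ascending; both rest on the same key lemma, and the perfectness worry you flag for the intermediate terms $I^kE$ is resolved automatically by stability of $\DHiggs(X)^{\uni}$ under extensions, exactly as in your descending induction.
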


\begin{proof}
    For i), we see that the trivial Higgs bundle is nilpotent and an extension
    of nilpotent endomorphisms is nilpotent. For ii) we pass to an open
    affinoid cover trivializing the tangent bundle as above and write
    $\theta_i$ for the nilpotent endomorphisms of $E$. Since they commute we
    see that $K = \bigcap \ker_i \theta_i$ is a non-trivial sub-Higgs sheaf
    with zero Higgs field. Passing to the quotient $E/K$ we get another Higgs
    sheaf with nilpotent Higgs field, and this process must end by finite
    generation. By regularity we see that any Higgs sheaf with zero Higgs field
    is locally derived unipotent, and hence so is $E$ by induction.

    For iii), let $E$ be a nilpotent Higgs bundle. Consider as above the subsheaf
    $K \subset E$ which, on an affinoid $\Spa(A,A^+)$ trivializing $T^1$ and
    $E$, is given by the kernel of the Higgs field. Since the ambiguity of
    generator does not change the kernel $K$, we see that $K$ is a well defined
    sub-Higgs sheaf of $E$. We claim that $K$ is furthermore a sub-bundle,
    meaning that $E/K$ is locally (finite) projective.

    But by regularity and our dimension assumption we see that $A$ is a
    principal ideal domain and hence $E/K \subset E$ is free, being a submodule
    of a free module. Hence by induction we see that $E$ is unipotent.
\end{proof}

\begin{remark}
    We suspect that point iii) above is not valid in higher dimensions, even
    assuming properness.  The problem seem to be related to the stronger
    problem of finding a subbundle $E_0$ of a nilpotent bundle $E$ (ie.~with
    $E/E_0$ also locally free) whose Higgs field is $0$. One can construct
    examples where the kernel of the Higgs field is not a subbundle.
\end{remark}

\section{The correspondence}
We want to relate unipotent quasi-pro-\'etale vector bundles and unipotent
Higgs bundles.  We've established in Propositions \ref{prop::Higgs_A_mod} and
\ref{prop::bundles_and_modules} that we can relate these to unipotent objects
in the category of $\RR \nu_* \what{\sh{O}}_X$-modules and
$\sh{A}(X,\sh{O}_{X})$-modules respectively.

Therefore we seek to construct an isomorphism
\[
    \sh{A}(X,\sh{O}_{X}) \reqv \RR\nu_* \what{\sh{O}}_X
\]
of $\bb{E}_\infty$-algebras which will realize the unipotent Simpson
correspondence.

The sheaf of $\bb{E}_\infty$-algebras 
$\RR\nu_* \what{\sh{O}}_X$ admits a filtration, the \emph{Hodge-Tate filtration},
which for $X$ proper, in view of the primitive comparison theorem,
induces a filtration on the \'etale cohomology of $X$. This filtration 
is analogous to the Hodge filtration in complex geometry. 

The proof of the correspondence will then be given in the following steps below.
We observe that we are essentially just reproving the Hodge-Tate decomposition.
\begin{itemize}
    \item Showing that the associated graded $\gr_\HT \RR\nu_* \what{\sh{O}}_X$
        is the symmetric $\bb{E}_\infty$-algebra on $\wtilde\Omega^1_X[-1]$
        (Theorems \ref{thm::local_ht} and \ref{thm::cotangent_ht}).
    \item Showing that $\sh{A}(X,\sh{O}_{X})$ is the free $\bb{E}_\infty$-algebra
        on $\wtilde\Omega^1_X$, and hence is isomorphic to the associated 
        graded of the Hodge-Tate filtration
        (Corollary \ref{cor::A(X)_sym}).
    \item Showing that the Hodge-Tate filtration splits canonically as soon as
        we fix a deformation $\wtilde{X} / (\Bdr^+/\xi^2)$ (Corollary
        \ref{cor::split_ht}), which exists canonically for varieties defined
        over a finite extension of $\bb{Q}_p$, or non-canonically for 
        compactifiable or affinoid $C$-varieties.
\end{itemize}

Finally we remark that we are working over an algebraically closed field $C$
just for convenience. The reader may replace $C$ by an arbitrary mixed
characteristic perfectoid field containing all $p$-power roots of unity. Such
assumption goes back to $p$-adic Hodge theory and is used implicitly in Theorem
\ref{thm::local_ht} and Theorem \ref{thm::cotangent_ht}.

\subsection{The Hodge-Tate filtration}
In this subsection we defined the Hodge-Tate filtration on pro-\'etale cohomology.
We start by showing an important computation in group cohomology that implies
that both the associated graded of $\RR\nu_* \what{\sh{O}}_X$ and 
$\sh{A}(X, \sh{O}_{X})$ are free $\bb{E}_\infty$-algebras. This will allow
us to easily define maps of $\bb{E}_\infty$-algebras via the universal property.

In the lemma below, we will denote $\Sym (M[-1])$ by $\Sym M[-1]$ to
clean up the notation. This is a coconnective analogue of a theorem 
of Illusie (which asserts the same but for $M[1]$ instead).

\begin{lemma}
    \label{lemma::coconnective_illusie}
    Let $M$ be a finite locally free $\sh{O}_{X}$-module on $X_\et$.
    Then there is an equivalence of $\bb{E}_\infty$-algebras
    \[
        \Sym M[-1] \isoto \bigoplus_n \bigwedge^n M [-n],
    \]
    where the right hand side is endowed with the usual wedge product.
\end{lemma}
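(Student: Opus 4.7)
The plan is to compute $\Sym^n(M[-1])$ termwise using the Koszul sign rule, and then invoke the universal property of the free $\bb{E}_\infty$-algebra to promote the resulting identification to an algebra equivalence.

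First, since we work in characteristic zero, the free $\bb{E}_\infty$-algebra admits a weight decomposition $\Sym(N) = \bigoplus_n \Sym^n(N)$, and the norm map gives equivalences $\Sym^n(N) \reqv (N^{\otimes n})_{h\Sigma_n} \reqv (N^{\otimes n})^{h\Sigma_n}$ for every $N$. Applying this with $N = M[-1]$, the Koszul sign rule says that the $\Sigma_n$-action on the underlying complex $M^{\otimes n}[-n]$ of $(M[-1])^{\otimes n}$ is the tensor product of the usual permutation action on $M^{\otimes n}$ with the sign character (a transposition of two factors in cohomological degree one introduces a minus).

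Second, since $M$ is an ordinary locally free sheaf and we are in characteristic zero, the higher cohomology of $\Sigma_n$ with coefficients in $M^{\otimes n}$ vanishes, so strict and homotopy (co)invariants agree. The antisymmetrization idempotent $e = \tfrac{1}{n!}\sum_{\sigma}\mathrm{sgn}(\sigma)\sigma$ then identifies the sign-twisted $\Sigma_n$-invariants of $M^{\otimes n}$ with the classical exterior power $\bigwedge^n M$. Combining these two steps gives canonical equivalences $\Sym^n(M[-1]) \reqv (\bigwedge^n M)[-n]$, and hence a weight-preserving equivalence of underlying complexes $\Sym(M[-1]) \reqv \bigoplus_n (\bigwedge^n M)[-n]$.

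For the algebra structure, the universal property of $\Sym(M[-1])$ applied to the weight-one inclusion $M[-1] \inj \bigoplus_n (\bigwedge^n M)[-n]$ in the target exterior algebra produces a canonical $\bb{E}_\infty$-map. To see that this map realises the equivalence built above, one may reduce locally to the rank-one case $M \cong \sh{O}_X$, where the computation collapses to $\Sym(\sh{O}_X[-1]) = \sh{O}_X \oplus \sh{O}_X[-1]$ (the symmetric powers in weight $\geq 2$ are killed since the sign representation has no $\Sigma_n$-invariants), and then to bootstrap to arbitrary rank via the tensor-product decomposition $\Sym(M_1 \oplus M_2) \reqv \Sym(M_1) \otimes \Sym(M_2)$. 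The main subtlety, and the step that needs the most care, is verifying that the multiplication induced from the $\bb{E}_\infty$-structure on $\Sym(M[-1])$ matches the honest wedge product on the target; however this is forced by the universal property once the weight decomposition is identified, because the multiplication of the target is itself the antisymmetrization of the tensor product, which is precisely the image of concatenation under the projector $e$.
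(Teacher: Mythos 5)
Your proof is correct and follows essentially the same route as the paper's: both rest on the weight decomposition $\Sym = \bigoplus_n \Sym^n$, the Koszul sign rule turning the $\Sigma_n$-action on $(M[-1])^{\otimes n}$ into the sign-twisted permutation action, the characteristic-zero vanishing of higher $\Sigma_n$-(co)homology, and the reduction to rank one via $\Sym(M_1\oplus M_2)\simeq \Sym(M_1)\otimes\Sym(M_2)$. The only cosmetic difference is that you identify $\Sym^n(M[-1])\simeq(\bigwedge^n M)[-n]$ for arbitrary rank via the antisymmetrization idempotent and build the global algebra map from the universal property, whereas the paper passes to rank one immediately and simply observes that the sign coinvariants $R/2R$ vanish for $n\geq 2$.
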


In other words, the above implies that 
$\Sym M[-1]$ is
\emph{formal}, that is, equivalent to the direct sum of its cohomology sheaves
(equivalent, it can be represented by a complex with zero differential).

\begin{proof}
    This is a local statement, so we can suppose that $M$ is free, and since
    both sides are take direct sums to coproducts (of
    $\bb{E}_\infty$-algebras), we can also reduce to the case of rank $1$.  We
    can also do this computation at the level of pre-sheaves, since it will be
    clear a posteriori that the result is a sheaf.

    We consider a $\bb{Q}$-algebra $R$, and construct an isomorphism $\Sym
    R[-1] \isoto R \oplus R[-1]$.  By \cite[Ex.~3.1.3.14]{HA}, for any ring $R$
    we have $\Sym R[-1] \isoto \bigoplus \Sym^n R[-1]$, with 
    \[
    \Sym^n R[-1] 
    = (R^{\otimes n}[-n])_{h\Sigma_n}
    = (R^{\otimes n})_{h\Sigma_n}[-n].
    \]
    We note that $R^{\otimes n} \isoto R$ as an $R$-module,
    but $\Sigma_n$ acts via the sign action.

    To finish, we note that in characteristic $0$ we have that $R_{h\Sigma_n} =
    0$ for the sign action on $R$.  Indeed, our assumptions imply
    \[
        \HH_i(R_{h\Sigma_n}) = \HH_i(\Sigma_n, R) = 0, \quad
            i > 0,
    \]
    since $\Sigma_n$ is a finite group.  Finally $\HH_0(R_{h\Sigma_n}) =
    R_{\Sigma_n} = R/2R = 0$ since $2$ is invertible in $R$.
\end{proof}

\begin{remark}
    For the above result to be true, it is crucial that we are in
    characteristic $0$.  If $2$ is not invertible then $R_{\Sigma_n}$ might be
    non-zero.  Furthermore, the group homology of $\Sigma_n$ is rather
    non-trivial with $\bb{Z}$ or $\bb{F}_p$ coefficients!
\end{remark}

\begin{corollary}
    \label{cor::A(X)_sym}
    Let $X$ be a smooth, rigid-analytic variety over $C$.
    Then there is a canonical isomorphism of $\bb{E}_\infty$-algebras
    \[
        \Sym \wtilde\Omega^1[-1] \isoto \bigoplus_n \wtilde\Omega^n_X [-n]
            = \sh{A}(X,\sh{O}_{X}).
    \]
\end{corollary}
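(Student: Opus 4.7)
The plan is to combine the preceding lemma \ref{lemma::coconnective_illusie} applied to the locally free sheaf $\wtilde\Omega^1_X$ with the decomposition of $\sh{A}(X,\sh{O}_{X})$ already obtained in Corollary \ref{cor::A(X)_algebra}. Concretely, I would argue as follows.

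First, I would set $M = \wtilde\Omega^1_X$ in Lemma \ref{lemma::coconnective_illusie}: since $X$ is smooth, $\wtilde\Omega^1_X$ is a finite locally free $\sh{O}_X$-module on $X_\et$, so the lemma applies and yields an equivalence of $\bb{E}_\infty$-algebras
\[
    \Sym \wtilde\Omega^1_X[-1] \isoto \bigoplus_n \wtilde\Omega^n_X[-n],
\]
where the right hand side is endowed with the wedge product. Next, I would invoke Corollary \ref{cor::A(X)_algebra}, which identifies $\sh{A}(X,\sh{O}_{X})$ as an $\bb{E}_\infty$-algebra with $\bigoplus_n \wtilde\Omega^n_X[-n]$ equipped with precisely the wedge product structure. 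Composing, I get the desired isomorphism.

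The only thing that requires a moment's thought is the canonicity and the matching of algebra structures. The universal property of $\Sym$ gives a canonical map $\Sym \wtilde\Omega^1_X[-1] \to \sh{A}(X,\sh{O}_{X})$ of $\bb{E}_\infty$-algebras, induced by the inclusion of $\wtilde\Omega^1_X[-1]$ as the degree $1$ summand of $\sh{A}(X,\sh{O}_{X}) = \bigoplus_n \wtilde\Omega^n_X[-n]$. To check this is an equivalence, I would reduce to a local question on $X_\et$ where $\wtilde\Omega^1_X$ becomes free, apply the lemma to see that both sides split as $\bigoplus_n \wtilde\Omega^n_X[-n]$, and observe that on each summand the comparison reduces to the statement that the wedge product coincides with the multiplication in the free commutative $\bb{E}_\infty$-algebra on a degree $1$ generator. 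This last identification is precisely the content of the lemma.

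The main (and only non-trivial) obstacle is genuinely the lemma itself, which requires characteristic zero; by the time one reaches the corollary, essentially nothing remains to be proven. In particular no computation of group cohomology, descent argument, or Hodge-Tate input is needed here — the corollary is a direct packaging of \ref{lemma::coconnective_illusie} and \ref{cor::A(X)_algebra}.
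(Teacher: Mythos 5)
Your proposal is correct and follows exactly the paper's own (one-line) proof: apply Lemma \ref{lemma::coconnective_illusie} to $M = \wtilde\Omega^1_X$ and combine with the identification of $\sh{A}(X,\sh{O}_{X})$ with $\bigoplus_n \wtilde\Omega^n_X[-n]$ and its wedge-product algebra structure from Corollary \ref{cor::A(X)_algebra}. Your additional remarks on canonicity via the universal property of $\Sym$ and the local reduction are a reasonable elaboration of what the paper leaves implicit.
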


\begin{proof}
    Lemma above and Corollary \ref{cor::A(X)_algebra}.
\end{proof}

Given a map $\wtilde\Omega^1[-1] \to R$ in $\icat{D}(X)$, 
with $R$ in $\CAlg(\icat{D}(X))$,
one can explicitly define the homotopy class of the induced morphism
$
    \Sym \wtilde\Omega^1[-1] \to R
$
(See for example the proof of 
\cite[Prop.~7.2.5]{haoyang2019ht}).
Since we're in characteristic $0$, the canonical map
$(\wtilde\Omega^1)^{\otimes n} \to \wtilde\Omega^n$
admits a section
\[
    \omega_1 \wedge \dots \wedge \omega_n \mapsto
    \frac{1}{n!}\sum_{\sigma \in \Sigma_n} \operatorname{sgn}(\sigma)
    \ 
    \omega_{\sigma(1)} \otimes \dots \otimes \omega_{\sigma(n)};
\]
this allows us to define the induced map as the direct sum of the
maps
\[
    \wtilde\Omega^n[-n] \to (\wtilde\Omega^1[-1])^{\otimes n} \to
    R^{\otimes n} \to R.
\]
In other words, we must now show that $\RR^1\nu_* \what{\sh{O}}_X$
generates $\gr_\HT \RR\nu_* \what{\sh{O}}_X$
as a graded ring via the cup product.
(This also works for any $M$ in the lemma above).

\begin{definition}
    [The Hodge-Tate filtration]
    \label{def::ht_filtration}
    Let $X$ be a smooth rigid-analytic variety over $C$
    and let $\nu \colon X_\qproet \to X_\et$ be the canonical
    map of sites.
    Then the \emph{Hodge-Tate filtration} on $R\nu_* \what{\sh{O}}_X$
    is defined to be the canonical filtration on it,
    that is, the filtration induced by 
    $\tau^{\leqq i}\RR\nu_* \what{\sh{O}}_X$.
    The associated graded of $R\nu_* \what{\sh{O}}_X$ is therefore
    \[
        \gr_{\HT} \RR\nu_* \what{\sh{O}}_X = 
        \bigoplus_i \RR^i\nu_* \what{\sh{O}}_X[-i].
    \]

\end{definition}

\begin{remark}
    The above definition still makes sense for all
    proper rigid analytic varieties $X$ over $C$,
    however, it is only well behaved for $X$ smooth.
    In general, one can use the tools of Kan extensions
    to reduce to the smooth case.
    This is made precise using the $\texttt{\'Eh}$ topology 
    in \cite{haoyang2019ht}.
\end{remark}

\begin{remark}
    The above induces a filtration, also called the Hodge-Tate filtration, 
    on the pro-\'etale cohomology 
    $\RR\Gamma_\et(X, \RR\nu_* \what{\sh{O}}_X) 
    = \RR\Gamma_\qproet(X, \what{\sh{O}}_X)$ of $X$.
    This descends to a filtration on cohomology groups
    $\HH^k_\qproet(X, \what{\sh{O}}_X)$ via the image of 
    $\HH^k_\et(X, \tau^{\leqq i} \RR\nu_* \what{\sh{O}}_X)$.
    If the filtration splits, then in fact we have injections
    \[
        \dots \HH^k_\et(X, \tau^{\leqq i} \RR\nu_* \what{\sh{O}}_X)
        \subset \HH^k_\et(X, \tau^{\leqq i + 1} \RR\nu_* \what{\sh{O}}_X)
        \subset \dots \subset \HH^k_\qproet(X, \what{\sh{O}}_X)
    \]
    and the associated graded will be 
    $\HH^k_\et(X, \RR^i \nu_* \what{\sh{O}}_X[-i])
    = \HH^{k-i}_\et(X, \RR^i \nu_* \what{\sh{O}}_X)$.
\end{remark}

We can now compute the associated graded of the Hodge-Tate filtration.  All the
essential ideas of the proof are not new, and we follow closely the ideas of
\cite{bhatt2017hodge}, \cite{BMSI} and \cite{haoyang2019ht}.

\begin{theorem}
    \label{thm::local_ht}
    Let $X$ be a smooth rigid-analytic variety over $C$.  Then $R^0\nu_*
    \what{\sh{O}}_X \isoto \sh{O}_{X}$, $R^1 \nu_* \what{\sh{O}}_X$ is a vector
    bundle of rank equal to the dimension of $X$ and the induced map
    \begin{equation*}
        \Sym (\RR^1\nu_*\what{\sh{O}}_X[-1]) \reqv 
        \gr_\HT \RR\nu_* \what{\sh{O}}_X = 
        \bigoplus_n \RR^n \nu_* \what{\sh{O}}_X [-n]
    \end{equation*}
    is an equivalence of $\bb{E}_\infty$-algebras.  Here $\Sym M$ denotes the
    free $\bb{E}_\infty$-algebra on $M \in \icat{D}(X)$.
\end{theorem}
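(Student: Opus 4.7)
The plan is to reduce this to a local computation using étale toric charts and then to carry out a Galois cohomology calculation on a perfectoid cover. Since the question is local on $X_\et$, and since smoothness implies that étale locally $X$ admits an étale map $\phi\colon X \to \bb{T}^n = \Spa(C\langle T_1^{\pm1},\dots,T_n^{\pm1}\rangle)$, we may assume $X$ is a small affinoid $\Spa(R, R^+)$ equipped with such a chart. Then $\wtilde\Omega^1_X$ is a free $\sh{O}_X$-module of rank $n$, trivialized by the basis $\frac{dT_i}{T_i}(-1)$.

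First I would pull back the standard perfectoid pro-étale Galois cover $\bb{T}^n_\infty \to \bb{T}^n$ (given by adjoining all $p$-power roots of the $T_i$'s) to obtain a pro-étale cover $X_\infty \to X$, which is a torsor under $\Gamma := \bb{Z}_p(1)^n$. Since $X_\infty$ is affinoid perfectoid, the pro-étale sheaf $\what{\sh{O}}_X$ is acyclic on it (by the almost vanishing of higher cohomology of $\sh{O}^+$ on affinoid perfectoids, combined with the standard $\RR\lim$ argument used already in the proof of Proposition~\ref{prop::univ_cover}). Consequently, the Cartan--Leray / Čech-to-derived spectral sequence for $X_\infty \to X$ yields
\[
    \RR\nu_* \what{\sh{O}}_X \big|_X \;\simeq\; \RR\Gamma_\cont\bigl(\Gamma,\, \what{\sh{O}}(X_\infty)\bigr),
\]
where the right-hand side is the continuous cohomology of the topological $\Gamma$-module $\what{\sh{O}}(X_\infty)$.

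Next I would carry out the group cohomology calculation. By almost purity applied to $X \to \bb{T}^n$, one has an almost isomorphism $\what{\sh{O}}(X_\infty) \simeq R \widehat\otimes_{\sh{O}(\bb{T}^n)} \sh{O}(\bb{T}^n_\infty)$, and $\sh{O}(\bb{T}^n_\infty)$ decomposes as a completed direct sum indexed by the characters of $\Gamma$, namely the $(\bb{Z}[1/p]/\bb{Z})^n$-grading by $p$-power root monomials. The continuous cohomology of $\Gamma = \bb{Z}_p(1)^n$ on each character eigenspace is then computed by an explicit Koszul complex: the trivial character contributes $R$ in each cohomological degree wedged appropriately (giving $\bigwedge^\bullet (\wtilde\Omega^1_R)$) while all nontrivial characters are almost zero after taking invariants because the corresponding generator of $\Gamma$ acts by multiplication by $(\zeta - 1)$ with $\zeta$ a nontrivial $p$-power root of unity, and this element is invertible after inverting $p$ on almost elements. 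Thus, after inverting $p$, we obtain isomorphisms $\RR^i\nu_* \what{\sh{O}}_X \;\cong\; \bigwedge^i \wtilde\Omega^1_X$, with the Tate twist appearing naturally because the generators of $\Gamma = \bb{Z}_p(1)^n$ pair with $\frac{dT_i}{T_i}$ to produce the twist; in particular $\RR^0\nu_*\what{\sh{O}}_X = \sh{O}_X$ and $\RR^1\nu_*\what{\sh{O}}_X \cong \wtilde\Omega^1_X$ is locally free of rank $n = \dim X$.

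Finally, the map in the statement is constructed via the universal property: having identified $\RR^1\nu_*\what{\sh{O}}_X$ with $\wtilde\Omega^1_X$, the canonical class $\wtilde\Omega^1_X[-1] \to \RR\nu_* \what{\sh{O}}_X \to \gr_\HT \RR\nu_* \what{\sh{O}}_X$ extends by freeness of $\Sym$ to a map of $\bb{E}_\infty$-algebras $\Sym(\wtilde\Omega^1_X[-1]) \to \gr_\HT \RR\nu_* \what{\sh{O}}_X$. By Lemma~\ref{lemma::coconnective_illusie} the source is $\bigoplus_n \wtilde\Omega^n_X[-n]$ with the wedge product, so it suffices to check that on graded pieces this map realizes the wedge-product identification $\bigwedge^n \wtilde\Omega^1_X \cong \RR^n\nu_*\what{\sh{O}}_X$. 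This is precisely the statement that the cup product in $\RR\Gamma_\cont(\Gamma,-)$ agrees with the exterior product on the Koszul complex, which is immediate from the explicit Koszul model. The main obstacle I anticipate is the almost-mathematics bookkeeping in step two---controlling how the nontrivial character summands kill each other almost integrally and then showing that the almost ambiguity disappears after inverting $p$---together with verifying that the cup product indeed matches the wedge product on the Koszul side rather than just agreeing up to a nonzero scalar.
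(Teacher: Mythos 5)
Your proposal follows essentially the same route as the paper's proof: reduce étale-locally to a toric chart, pass to the perfectoid $\bb{Z}_p(1)^n$-torsor $X_\infty \to X$, use acyclicity of $\what{\sh{O}}_X$ on affinoid perfectoids to identify $\RR\nu_*\what{\sh{O}}_X$ with continuous group cohomology, and then invoke the freeness of $\Sym$ together with Lemma~\ref{lemma::coconnective_illusie} to conclude. The only difference is that you spell out the character-eigenspace/Koszul computation (and the almost-mathematics bookkeeping) that the paper delegates to Lemmas~4.5 and~5.5 of Scholze's \emph{$p$-adic Hodge theory for rigid-analytic varieties}; that computation is exactly the cited one, so the argument is correct.
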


\begin{proof}
    [Proof]
    We have a map $\RR^1\nu_* \what{\sh{O}}_X[-1] \to \gr_\HT \RR\nu_*
    \what{\sh{O}}_X$ given by the inclusion of the degree $1$ part.  By the
    lemma and the above computation, we need to show that $\RR^i \nu_*
    \what{\sh{O}}_X = \bigwedge^i \RR^1\nu_* \what{\sh{O}}_X$ and that
    $\RR^1\nu_* \what{\sh{O}}_X$ generates the whole cohomology ring via cup
    product. 

    The statement is \'etale local so we can suppose that $X$
    is an affinoid admitting an \'etale map to a rigid-analytic torus
    \[
        X \to \bb{T}^n = 
        \Spa C\left\langle T_1^{\pm 1}, 
            \dots, T_d^{\pm 1}  \right\rangle.
    \]
    Now, the torus admits a quasi-pro-\'etale cover
    by an affinoid perfectoid
    \[
        \wtilde{\bb{T}}^n = 
        \Spa C\left\langle T_1^{\pm 1/p^\infty}, 
        \dots, T_d^{\pm 1/p^\infty}  \right\rangle \to \bb{T}^n
    \]
    and therefore so does $X$ by base change.
    That is, we have a perfectoid affinoid cover
    $\wtilde{X} = \wtilde{\bb{T}}^n \times_{\bb{T}^n} X \to X$
    of $X$ which, by the acyclicity of $\what{\sh{O}}_X$
    (Theorem \ref{thm::perfectoid_acyclicity}),
    can be used to compute the cohomology of $\what{\sh{O}}_X$
    as \v{C}ech cohomology on global sections.
    Now $\wtilde{\bb{T}}^n \to \bb{T}^n$ 
    (and hence $\wtilde{X} \to X$) is a $\bb{Z}_p(1)$ torsor,
    since it is a limit of the 
    $\Spa C\langle T^{1/p^n} \rangle$ which are $\bb{Z}/p^n(1)$
    torsors
    \footnote{
    A word of warning on a potentially confusing abuse of notation.
    We have introduced a pro-finite-\'etale universal cover
    $\wtilde{X}$ of $X$, but this is not the same as the $\wtilde{X}$
    defined above.
    Instead, the construction above is much simpler as it
    only captures the pro-$p$ part of the fundamental group,
    which is enough for the computation.
    }.

    Writing $X = \Spa(R,R^+)$ and 
    $\wtilde{X} = \Spa(R_\infty, R^+_\infty)$,
    We conclude that this \v{C}ech cohomology complex is just the 
    continuous cohomology
    \[
        \RR\Gamma_\qproet (X,\what{\sh{O}}_X) \isoto
        \RR\Gamma_\cont(\bb{Z}_p(1), R_\infty)
    \]
    for this action of $\bb{Z}_p(1)$ on $R_\infty$.
    This computation is carried out on an integral level
    in \cite[Lemmas~4.5,~5.5]{scholze2013adic}, which implies
    $\RR^1\nu_* \what{\sh{O}}_X$ is free and generates 
    the higher pushfowards via cup products.
\end{proof}

%
%

\begin{corollary}
    The Hodge-Tate filtration on $\RR\nu_* \what{\sh{O}}_X$ splits if and only
    if the induced filtration on $\tau^{\leqq 1} \RR\nu_* \what{\sh{O}}_X$
    splits.
\end{corollary}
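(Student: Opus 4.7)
The forward direction is formal: truncation is a functor on $\bb{E}_\infty$-algebras, so applying $\tau^{\leqq 1}$ to a splitting of the full Hodge-Tate filtration produces one for $\tau^{\leqq 1}\RR\nu_*\what{\sh{O}}_X$. The content of the statement lies in the converse.

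Suppose we are given a splitting of the filtration on $\tau^{\leqq 1}\RR\nu_*\what{\sh{O}}_X$, in particular an $\sh{O}_X$-linear section $s \colon \RR^1\nu_*\what{\sh{O}}_X[-1] \to \tau^{\leqq 1}\RR\nu_*\what{\sh{O}}_X$ of the projection to the top Postnikov piece. Composing with the canonical inclusion $\tau^{\leqq 1}\RR\nu_*\what{\sh{O}}_X \to \RR\nu_*\what{\sh{O}}_X$ yields a morphism $\RR^1\nu_*\what{\sh{O}}_X[-1] \to \RR\nu_*\what{\sh{O}}_X$ in $\icat{D}(X_\et)$. The plan is then to invoke the universal property of the free $\bb{E}_\infty$-algebra to extend this to an $\bb{E}_\infty$-algebra morphism
\[
    \varphi \colon \Sym (\RR^1\nu_*\what{\sh{O}}_X[-1]) \to \RR\nu_*\what{\sh{O}}_X,
\]
and show that $\varphi$ is an equivalence; via the identification of the left-hand side with $\gr_\HT \RR\nu_*\what{\sh{O}}_X$ provided by Theorem \ref{thm::local_ht}, this is exactly a splitting of the Hodge-Tate filtration.

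To verify $\varphi$ is an equivalence, I would pass to associated gradeds. The source carries a natural filtration by total $\Sym$-degree, which via Lemma \ref{lemma::coconnective_illusie} coincides with the Postnikov filtration; the target is filtered by $\tau^{\leqq \bullet}$ by definition. By construction $\varphi$ is filtered, and on the degree-$1$ graded piece it recovers $s$ composed with the projection onto $\RR^1\nu_*\what{\sh{O}}_X[-1]$, which is the identity by the splitting hypothesis. Thus the induced morphism $\gr \varphi$ is an $\bb{E}_\infty$-algebra endomorphism of $\Sym(\RR^1\nu_*\what{\sh{O}}_X[-1])$ (after applying Theorem \ref{thm::local_ht} on the target side) whose restriction to $\RR^1\nu_*\what{\sh{O}}_X[-1]$ is the identity; by the universal property of $\Sym$, it must itself be the identity.

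Finally, since $X$ is smooth of some finite local dimension $d$, the Postnikov filtration is bounded (Theorem \ref{thm::local_ht} shows $\RR^i\nu_*\what{\sh{O}}_X$ vanishes for $i > d$), so a filtered morphism inducing an equivalence on associated gradeds is itself an equivalence. This closes the argument. The main obstacle I anticipate is verifying cleanly that $\varphi$ respects the two filtrations with the claimed graded behavior, which requires unwinding the universal property of $\Sym$ together with the formality result of Lemma \ref{lemma::coconnective_illusie}; once this bookkeeping is in place, the rest is a purely formal consequence of Theorem \ref{thm::local_ht}.
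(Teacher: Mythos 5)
Your proposal is correct and follows essentially the same route as the paper: compose the section $\RR^1\nu_*\what{\sh{O}}_X[-1] \to \tau^{\leqq 1}\RR\nu_*\what{\sh{O}}_X$ with the inclusion into $\RR\nu_*\what{\sh{O}}_X$, extend to an $\bb{E}_\infty$-map out of $\Sym(\RR^1\nu_*\what{\sh{O}}_X[-1])$ by the universal property, and conclude it is an equivalence by checking on associated gradeds via Theorem \ref{thm::local_ht}. The paper's proof is just a terser version of yours; the extra bookkeeping you flag (filtration compatibility, boundedness of the Postnikov tower) is correctly identified and handled.
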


\begin{proof}
    Indeed, if we have a splitting $\RR^1\nu_* \what{\sh{O}}_X [-1] \to
    \tau^{\leqq 1} \what{\sh{O}}_X$ we can compose with the filtration map to
    obtain a map
    \[
        \Sym \RR^1\nu_* \what{\sh{O}}_X[-1] \reqv \RR\nu_* \what{\sh{O}}_X
    \]
    which is an equivalence, because it becomes as equivalence on associated
    graded.  By the theorem above, the Hodge-Tate filtration splits.  The
    converse is immediate.
\end{proof}

\subsection{The lift to $\Bdr^+/\xi^2$}
The non-abelian Hodge correspondence depends on a flat lift $\wtilde{X}$ of $X$
to the ring of periods $\Bdr^+/\xi^2$, that is, to $\Spa(\Bdr^+/\xi^2,
\Ainf/\xi^2)$.  In this subsection we explore a link between the pro-\'etale
cohomology of $\what{\sh{O}}_X$ and the cotangent complex, which finishes our
computation of the associated graded $\gr_\HT \RR\nu_* \what{\sh{O}}_X$ and
allows us to split this filtration when the aforementioned lift exists.

For ease of notation, we will also denote $\Bdr^+/\xi^2$ by $B_2$ in the
computations.

\begin{definition}
    Let $X$ be an adic space over $C$.  A \emph{(flat) lift to $\Bdr^+/\xi^2 =
    B_2$} (or a \emph{deformation to $\Bdr^+/\xi^2$}) is a cartesian square
    \begin{equation*}
    \begin{tikzcd}
        X \ar[r]\ar[d]  & \wtilde{X} \ar[d] \\
        \Spa C \ar[r] & \Spa B_2 
    \end{tikzcd}
    \end{equation*}
    with $\wtilde{X} \to B_2$ flat.  Here, the positive de Rham ring is given
    its canonical topology coming from the $p$-adic topology of $\Ainf/\xi^2$.
    In other words, a lift to $\Bdr^+/\xi^2$ is an adic space $\wtilde{X}$ flat
    over $\Bdr^+/\xi^2$ together with an identification of $X$ with the zero
    locus of $\xi$.
\end{definition}

The deformation theory of $X$ is controlled by the cotangent complex
$\Cotan_{X/C}$, which can be thought as a ``complete'' version of
the usual (topos-theoretic) cotangent complex
(see definition \ref{def::analytic_cotangent}).

To understand what is the role of the lift in splitting the 
Hodge-Tate filtration, we first remind the reader that
if $X \inj Y$ is a closed immersion given by
a coherent ideal sheaf $\sh{I}$, then
$
    \tau_{\leqq1}\Cotan_{Y/X} \isoto \sh{I}/\sh{I}^2[1]
$
(see proposition \ref{prop::closed_immersions} on the appendix).
This implies that 
\[
    \tau_{\leqq1}\Cotan_{C/B_2} = C(1)[1], \quad
    \tau_{\leqq1}\Cotan_{\wtilde{X}/X} = \sh{O}_{X}(1)[1],
\]
where the second isomorphism follows from flatness: indeed if $X/\Spa B_2$ is
flat, we get an isomorphism $f^* C(1) \isoto \sh{O}_{X}(1) \reqv \sh{I}$, where
$\sh{I}$ is the ideal defining $X \inj \wtilde{X}$, by applying the exact
functor $f^*$ to the exact sequence $0 \to C(1) \to \Bdr^+/\xi^2 \to C \to 0$
seen as sheaves on the topological space $|\Spa C| = |\Spa B_2|$.

Now assume that $X$ is smooth over $C$, so in particular $\Cotan_{X/C} =
\Omega^1_{X/C}$. The transitivity fiber sequence for $X \to \Spa C \to \Spa
\Bdr^+/\xi^2$ now gives us a fiber sequence
\[
    \sh{O}_{X}(1)[1] \to 
    \tau_{\leqq1}\Cotan_{X/B_2} \to 
    \Omega^1_{X/C} \to \sh{O}(1)[2]
\]
(observe how the smoothness assumption allows us to truncate).
This means, in particular, that there is an obstruction class
$o \in \Ext^2(\Omega^1_{X/C}, \sh{O}_{X}(1)) = \HH^2(X,\wtilde{T}_X)$
which vanishes precisely when this fiber sequence splits. This is equivalent
to constructing an isomorphism
\[
    \tau_{\leqq1}\Cotan_{X/B_2}(-1)[-1] = 
    \sh{O}_{X} \oplus \wtilde\Omega^1_{X/C}[-1]
\]
identifying the map $\sh{O}_{X} \to \tau_{\leqq1}\Cotan_{X/B_2}(-1)[-1]$
with the map from the fiber sequence.  The following theorem now relates the
vanishing of this class and splitting the Hodge-Tate filtration. This is a
convenient phrasing of a know result which we include the proof for convenience.

\begin{theorem}
    [{\cite[Prop.~8.15]{BMSI}}
    {\cite[Thm.~7.2.3]{haoyang2019ht}}]
    \label{thm::cotangent_ht}
    Let $X$ be a smooth rigid-analytic space over $C$.  Then there is a
    functorial equivalence
    \[
        \tau_{\leqq1}\Cotan_{X/B_2}(-1)[-1] \reqv 
        \tau^{\leqq 1} \RR\nu_* \what{\sh{O}}_X
    \]
    in the derived category of $\sh{O}_{X}$-modules.  Furthermore, the
    filtration induced by the fiber sequence above agrees with the Hodge-Tate
    filtration on the right hand side.  In particular, $\RR^1\nu_*
    \what{\sh{O}}_X \isoto \wtilde \Omega^1_X$.
\end{theorem}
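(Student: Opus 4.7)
My plan is to build the equivalence by a deformation-theoretic construction: the lift $\widetilde{X}/B_2$ should tautologically furnish, on each perfectoid quasi-pro-\'etale affinoid $U\to X$, a specific square-zero extension of the structure ring of $U$, and this datum is exactly what is classified by a map out of the (truncated) cotangent complex of $X/B_2$. Reading these data sheaf-theoretically on $X_\qproet$ and then pushing down via $\nu$ will produce the comparison map, after which the question of equivalence becomes a local computation matching the one already carried out in Theorem~\ref{thm::local_ht}.

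More concretely, the first step is to recall that any affinoid perfectoid $U=\Spa(R,R^+)$ quasi-pro-\'etale over $X$ admits a canonical flat lift $\widetilde U=\Spa(\widetilde R,\widetilde R^+)$ to $\widetilde X/B_2$, obtained from the tilting/untilt machinery: perfectoid affinoids lift uniquely over square-zero thickenings of $C$ inside $B_2$, and the lift is compatible with maps over $\widetilde X$. Flatness of $\widetilde X/B_2$ identifies the kernel of $\widetilde R\to R$ with $R(1)=\hat{\sh O}_X(U)(1)$, so $\widetilde R\to R$ is a square-zero thickening by $R(1)$ functorially in $U$. By the universal property of $\Cot_{X/B_2}$ (or rather its pro-\'etale/analytic avatar $\Cotan_{X/B_2}$ introduced in the appendix), this extension is classified by a morphism
\[
    \Cotan_{X/B_2}|_{U}\otimes_{\sh O_X}\hat{\sh O}_X \to \hat{\sh O}_X(1)[1]
\]
on $X_\qproet$, that is, by a map $\Cotan_{X/B_2}\to \RR\nu_*\hat{\sh O}_X(1)[1]$ in $\icat{D}(\sh O_X)$; Tate-twisting and shifting yields the desired comparison map $\Psi\colon \Cotan_{X/B_2}(-1)[-1]\to \RR\nu_*\hat{\sh O}_X$.

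The second step is to apply $\tau^{\leqq 1}$ to both sides. The transitivity fiber sequence $\sh O_X(1)[1]\to \tau_{\leqq 1}\Cotan_{X/B_2}\to \Omega^1_X$ recalled just before the theorem shows that $\tau_{\leqq 1}\Cotan_{X/B_2}(-1)[-1]$ is concentrated in degrees $[0,1]$, with $\sh H^0=\sh O_X$ and $\sh H^1=\wtilde\Omega^1_X$; Theorem~\ref{thm::local_ht} shows that $\tau^{\leqq 1}\RR\nu_*\hat{\sh O}_X$ is concentrated in degrees $[0,1]$ with $\sh H^0=\sh O_X$ and $\sh H^1=\RR^1\nu_*\hat{\sh O}_X$ a rank-$d$ vector bundle. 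It therefore suffices to check that $\Psi$ induces the identity on $\sh H^0=\sh O_X$ (which is clear from the construction, since the $H^0$ of the cotangent complex records the structure map) and an isomorphism $\wtilde\Omega^1_X\reqv \RR^1\nu_*\hat{\sh O}_X$ on $\sh H^1$.

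The main work, and the step I expect to be the main obstacle, is this last identification on $\sh H^1$. I would verify it \'etale locally, working on an affinoid $X=\Spa(R,R^+)$ admitting an \'etale map to the torus $\bb T^d$, and reusing the pro-finite-\'etale cover $\widetilde X=\widetilde{\bb T}^d\times_{\bb T^d}X$ from the proof of Theorem~\ref{thm::local_ht}. On one hand, the $\bb Z_p(1)^d$-continuous cohomology calculation of \cite{scholze2013adic} identifies $\RR^1\nu_*\hat{\sh O}_X$ with the free module on the classes $d\log T_i$ of the coordinates. On the other hand, the untilt of $\widetilde X$ is built by lifting the $T_i$ to elements $\widetilde T_i$ of $\widetilde R$, and the difference $\sigma\widetilde T_i-\widetilde T_i\in R(1)$ produced by a generator $\sigma$ of $\bb Z_p(1)$ realizes exactly the same class; matching these, and verifying that the deformation-theoretic map respects this identification, establishes the isomorphism on $\sh H^1$. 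Once this is in place, functoriality of the construction is automatic, and the identification of filtrations is immediate since on both sides the canonical (Postnikov) filtration is being compared.
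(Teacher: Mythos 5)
Your construction is, at bottom, the same as the paper's: the comparison map is obtained from the cotangent complex evaluated on affinoid perfectoids in $X_\qproet$, where the key input is that a perfectoid $\Spa(R,R^+)$ has vanishing cotangent complex over $\sh{O}_C$, so that $\Cotan_{R^+/\Ainf}\simeq \Cotan_{\sh{O}_C/\Ainf}\otimes_{\sh{O}_C}R^+\simeq R^+\{1\}[1]$; the paper phrases this via the transitivity triangle for $\Ainf\to\sh{O}_C\to R^+$ and sheafifies to get $\Cotan_{\what{\sh{O}}_X/\Binf}\simeq\what{\sh{O}}_X(1)[1]$, whereas you phrase the same fact as ``perfectoids deform uniquely over $\Ainf/\xi^2$'' and read off the extension class. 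The endgame is also identical: reduce étale-locally to a torus, use Theorem~\ref{thm::local_ht} to know both sides are abstractly isomorphic in degrees $0$ and $1$, and check the induced map in degree $1$ by matching $\dd\log T_i$ against the $\bb{Z}_p(1)$-cocycle of the lifted coordinates --- this last check is exactly the integral computation the paper delegates to \cite[Sec.~8.3]{BMSI}, and your sketch of it is the right one.

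The one genuine problem is that you build the comparison map out of a chosen global lift $\wtilde{X}/(\Bdr^+/\xi^2)$, which is \emph{not} a hypothesis of the theorem. The statement is unconditional and functorial in $X$; the lift only enters later (Theorem~\ref{thm::bdr_obstruction}) to \emph{split} the resulting filtration, and if the identification $\tau_{\leqq1}\Cotan_{X/B_2}(-1)[-1]\simeq\tau^{\leqq1}\RR\nu_*\what{\sh{O}}_X$ itself depended on $\wtilde{X}$, the logic of that section (the obstruction class in $\HH^2(X,\wtilde{T}_X)$ detecting when a splitting exists at all) would collapse. Fortunately your own construction does not actually use $\wtilde{X}$: the square-zero thickening $\wtilde{R}\to R$ of each affinoid perfectoid is the \emph{canonical} untilt over $\Ainf/\xi^2$ (essentially $W(R^{\flat+})/\xi^2$), unique and functorial in $R$ with no reference to any deformation of $X$, and the classifying map lands in $\Hom(\Cotan_{X/B_2},\what{\sh{O}}_X(1)[1])$ using only the structure morphism $X\to\Spa C\to\Spa(\Bdr^+/\xi^2)$. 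You should excise every mention of $\wtilde{X}$ from the construction (or, at minimum, prove that the resulting map is independent of the local lifts and glues); otherwise the argument proves a weaker, conditional statement and obscures which part of the correspondence is canonical and which part requires a choice.
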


\begin{proof}
    First, we see that $\tau_{\leqq 1} \Cotan_{X/B_2}$ is isomorphic
    to $\Cotan_{X/\Binf}$. This is due to the fact that $\xi$ is a non-zero
    divisor on $\Binf$, and hence $\Cotan_{C/\Binf} = \xi/\xi^2[1]$.
    The transitivity sequence for $X \to \Spa C \to \Spa \Binf$
    yields 
    \[
        \Cotan_{X/\Binf} = \cof(\Omega^1_{X/C}[-1] \to \sh{O}_{X}(1)[1]),
    \]
    the cofiber of the same morphism coming from the fiber sequence above.

    The proof of this theorem now relies on generalizing the cotangent compex
    to the quasi-pro-\'etale site of $X$, computing it there, and comparing to
    $\Cotan_{X/\Binf}$.  We recall that $X_\qproet$ has a basis of affinoid
    perfectoids; we can therefore define 
    \[
        \Cotan_{\what{\sh{O}}_X/\Binf} = 
        \Cotan_{\what{\sh{O}}_X^+/\Ainf} \left[\frac{1}{p}\right],
    \]
    and $\Cotan_{\what{\sh{O}}_X^+/\Ainf}$ to be the sheafification of the
    presheaf sending an affinoid perfectoid $\Spa(A,A^+)$ to
    $\Cotan_{A^+/\Ainf}$.  We remark that this cotangent complex is just the
    $p$-completed algebraic cotangent complex $\Cot_{A^+/\Ainf}$ since
    perfectoid algebras are uniform.

    The proof now relies on the existence of a natural comparison morphism
    \[
        \Cotan_{X/\Binf} \to 
        \RR\nu_* \Cotan_{\what{\sh{O}}_X/\Binf}
    \]
    on the \'etale site $X_\et$ which is furthermore natural in $X$.  Indeed,
    the data of such morphism corresponds to maps 
    $\Cotan_{X/\Binf}(U) \to 
        \Cotan_{\what{\sh{O}}_X/\Binf}(V)$
    for $V \to U$ quasi-pro-\'etale morphism with $V$ an affinoid perfectoid
    and $U \to X$ \'etale; these come from the functoriality of the analytic
    cotangent complex discussed in the appendix.

    We now proceed to compute the pro-\'etale cotangent complex.  Fix an
    affinoid perfectoid $\Spa(A,A^+)$ in $X_\qproet$.  Now $A^+$ is a
    relatively perfect $\sh{O}_{C}$-algebra, which implies that
    $\Cotan_{A^+/\Ainf} = 0$.  Therefore the transitivity fiber sequence of
    $\Ainf \to \sh{O}_{C}\to A^+$ gives us
    \[
        \Cotan_{\sh{O}_C/\Ainf} \otimes_{\sh{O}_{C}} A^+ \reqv 
        \Cotan_{A^+/\Ainf} 
    \]
    This left hand side is well understood; since $\Ainf \surj \sh{O}_{C}$ is a
    closed immersion with kernel generated by the regular element $\xi$ then
    $\Cotan_{C/\Binf}$ is just $(\xi)/(\xi^2)[1]$, so this tensor product is a
    shift of the \emph{Breuil-Kisin} twist $A^+\{1\}$.  By varying $A$ and
    sheafifying we obtain a equivalences\footnote{
        The Breuil-Kisin twist $A^+\{1\}$ is almost isomorphic
        to $A(1)$, so they agree after inverting $p$.
    }        
    \[
        \what{\sh{O}}_X^+\{1\}[1] \isoto 
        \Cotan_{\sh{O}_{C}/\Ainf} \otimes_{\sh{O}_{C}} \what{\sh{O}}_X
            \reqv
        \Cotan_{\what{\sh{O}}_X^+/\Ainf}; \quad
        \sh{O}_{X}(1)[1] \reqv \Cotan_{\what{\sh{O}}_X/\Binf}.
    \]

    We have now produced a natural morphism $ \Cotan_{X/\Binf} \to \RR\nu_*
    \what{\sh{O}}_X(1)[1] $ which we must show that identifies the left-hand side 
    with a truncation of the right-hand side. That is, we must show that
    \[
        \Cotan_{X/\Binf}(-1)[-1] \to 
        \tau^{\leqq 1} \RR\nu_* \what{\sh{O}}_X
    \]
    is an equivalence.  Now, this result is again local so we can again suppose
    that $X$ is an affinoid admitting an \'etale map to a torus
    \[
        f \colon X \to \bb{T}^n
    \]
    and therefore reduce to the torus itself: we already know that the right
    hand side has coherent cohomology, and in the left hand side we have
    $\Cotan_{X/\bb{T}^n} = 0$ since $X/\bb{T}^n$ is \'etale, which implies that
    the canonical map
    \[
        f^* \Cotan_{\bb{T}^n/\Binf} \reqv
        \Cotan_{X/\Binf}
    \]
    is an equivalence.
    
    Since $\Omega^1_{\bb{T}^n}$ is also free and of the same rank, it follows
    from Theorem \ref{thm::local_ht} that both sides are isomorphic, but we
    still need to check that the induced map is an isomorphism.  It is clear by
    definition that on degree zero this is an isomorphism.  For the result on
    $\Omega^1_X$, that is on degree one, we need to be a bit more careful, and
    the computation was carried integrally in \cite[Sec.~8.3]{BMSI}.
\end{proof}

In particular, this theorem implies links the obstruction class
with $p$-adic Hodge theory.

\begin{corollary}
    Let $X$ be a smooth rigid-analytic variety over $C$.
    The obstruction class
    $o \in \Ext^2(L_{X/C}^\an, \sh{O}_{X}(1)) = \HH^2(X, \wtilde{T}_X)$
    defined above detects precisely when the Hodge-Tate filtration
    splits.
\end{corollary}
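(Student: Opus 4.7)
The plan is to simply chain together the results established immediately beforehand. The preceding corollary (the one stating that the Hodge-Tate filtration on $\RR\nu_*\what{\sh{O}}_X$ splits iff its restriction to $\tau^{\leqq 1}\RR\nu_*\what{\sh{O}}_X$ does) reduces the question to whether the two-step filtration on $\tau^{\leqq 1}\RR\nu_*\what{\sh{O}}_X$ splits. Theorem \ref{thm::cotangent_ht} then identifies this filtered object with $\tau_{\leqq 1}\Cotan_{X/B_2}(-1)[-1]$ (with filtrations), so splitting on one side corresponds to splitting on the other.

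Next I would unwind the construction of $o$. By the transitivity fiber sequence for $X \to \Spa C \to \Spa B_2$ together with $\tau_{\leqq 1}\Cotan_{C/B_2} = C(1)[1]$ and the smoothness of $X/C$ (so $\Cotan_{X/C} = \Omega^1_{X/C}$), one obtains the fiber sequence
\[
    \sh{O}_X(1)[1] \to \tau_{\leqq 1}\Cotan_{X/B_2} \to \Omega^1_{X/C} \to \sh{O}_X(1)[2].
\]
The connecting map $\Omega^1_{X/C} \to \sh{O}_X(1)[2]$ is, by definition, the class $o \in \Ext^2(\Omega^1_{X/C}, \sh{O}_X(1)) = \HH^2(X,\wtilde{T}_X)$. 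A splitting of the fiber sequence is precisely a nullhomotopy of this connecting map, i.e.\ the vanishing of $o$ in $\Ext^2$.

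Twisting by $(-1)[-1]$ is an autoequivalence of $\icat{D}(\sh{O}_X)$, so splittings of the fiber sequence
\[
    \sh{O}_X(1)[1] \to \tau_{\leqq 1}\Cotan_{X/B_2} \to \Omega^1_{X/C}
\]
correspond bijectively to splittings of its twist
\[
    \sh{O}_X \to \tau_{\leqq 1}\Cotan_{X/B_2}(-1)[-1] \to \wtilde\Omega^1_{X/C}[-1],
\]
which under Theorem \ref{thm::cotangent_ht} is exactly the Hodge-Tate filtration on $\tau^{\leqq 1}\RR\nu_*\what{\sh{O}}_X$ (with graded pieces $\sh{O}_X$ in degree $0$ and $\wtilde\Omega^1_X[-1]$ in degree $1$, as Theorem \ref{thm::local_ht} records). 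Assembling the three bijections yields the corollary.

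The only point requiring any care — and I view it as the main (mild) obstacle rather than a real difficulty — is the compatibility of the filtration identifications in Theorem \ref{thm::cotangent_ht} with the filtration coming from the transitivity fiber sequence. Since that theorem explicitly asserts that ``the filtration induced by the fiber sequence above agrees with the Hodge-Tate filtration,'' this is already in hand, and the corollary follows formally.
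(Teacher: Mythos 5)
Your proposal is correct and follows exactly the route the paper intends: the paper states this corollary without a separate proof precisely because it is the formal concatenation of the earlier corollary (splitting of the full Hodge--Tate filtration reduces to splitting of $\tau^{\leqq 1}\RR\nu_*\what{\sh{O}}_X$), Theorem \ref{thm::cotangent_ht} (the filtered identification with $\tau_{\leqq 1}\Cotan_{X/B_2}(-1)[-1]$), and the discussion preceding that theorem where $o$ is defined as the connecting map of the transitivity fiber sequence. Your remark that the filtration compatibility asserted in Theorem \ref{thm::cotangent_ht} is the one nontrivial input is exactly right.
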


\begin{corollary}
     Let $X/C$ be a smooth affinoid rigid-analytic variety,
     or a smooth rigid-analytic curve.
     Then the Hodge-Tate filtration always splits.
\end{corollary}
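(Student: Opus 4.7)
The plan is to invoke the preceding corollary and reduce to showing that the obstruction class $o \in \HH^2(X, \wtilde T_X)$ vanishes in both cases. Since $X$ is smooth, $\wtilde T_X$ is a locally free $\sh{O}_X$-module of finite rank, hence coherent; thus it suffices to establish that $\HH^2(X, \sh{F}) = 0$ for any coherent $\sh{F}$ in each situation.

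For $X$ a smooth affinoid, this is immediate from Tate's acyclicity theorem (Kiehl): on an affinoid rigid-analytic space, higher coherent cohomology vanishes, so in particular $\HH^2(X, \wtilde T_X) = 0$ and the filtration splits canonically up to the choice of splitting of degree-one data. For $X$ a smooth rigid-analytic curve, the key input is that coherent cohomology on a rigid-analytic space of dimension $d$ vanishes in degrees strictly greater than $d$; with $\dim X = 1$ this again forces $\HH^2(X, \wtilde T_X) = 0$.

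The main point requiring a brief justification is the dimension-vanishing statement in the curve case, since unlike the affinoid case it is not pointwise-trivial. I would verify it by choosing an admissible affinoid cover $\{U_i\}$ of $X$ whose pairwise intersections $U_i \cap U_j$ are again affinoid (which exists because $X$ is separated and smooth of dimension one, so affinoid open subsets form a basis stable under finite intersections). By Tate acyclicity each $U_i$ and each $U_i \cap U_j$ has vanishing higher coherent cohomology, so the Čech-to-derived spectral sequence reduces $\HH^2(X, \wtilde T_X)$ to a piece of the Čech complex; standard cohomological-dimension arguments for rigid curves (equivalently, Kiehl's theorem combined with the fact that a one-dimensional rigid curve has cohomological dimension $1$ for coherent sheaves) then force this to vanish. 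Together with the affinoid case this proves the corollary.
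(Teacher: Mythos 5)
Your proposal is correct and follows the same route as the paper, whose entire proof is the one-line observation that $\HH^2(X,\wtilde{T}_X)=0$ in both cases; you simply supply the standard justifications (Tate--Kiehl acyclicity for affinoids, and cohomological-dimension bounds for curves) that the paper leaves implicit.
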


\begin{proof}
     In both cases $\HH^2(X,\wtilde{T}_X) = 0$.
\end{proof}

\subsection{Splitting the Hodge-Tate filtration}
Finally, we can compare the obstruction class coming from Theorem
\ref{thm::cotangent_ht} to split the Hodge-Tate filtration outside of the
affinoid setting.  The results in the last section allow us to relate the
problem of finding the spitting to the geometric problem of finding a flat
deformation to $\Bdr^+/\xi^2$.

We are also able to be more precise here. The vanishing of the class $o$
determines a splitting but it is not canonical. However, remembering the  
lift makes it so. This result is in \cite[Prop.~7.1.4]{haoyang2019ht},
but we given a different, slightly more direct proof.

\begin{theorem}
    \label{thm::bdr_obstruction}
    Let $X$ be a smooth rigid-analytic variety over $C$.
    Then each flat deformation of $X$ to $\Bdr^+/\xi^2$ determines
    a canonical splitting of the Hodge-Tate filtration on 
    $\RR\nu_*\what{\sh{O}}_{X}$.
\end{theorem}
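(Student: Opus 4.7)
The plan is to reduce, via the corollary preceding this theorem, to producing from $\wtilde{X}$ a canonical splitting of the Hodge-Tate filtration on $\tau^{\leqq 1}\RR\nu_* \what{\sh{O}}_X$; any such splitting promotes to a splitting of the full sheaf via the universal property of the free $\bb{E}_\infty$-algebra used in Corollary \ref{cor::A(X)_sym}. By Theorem \ref{thm::cotangent_ht} this is the same data as a splitting of the fiber sequence
\[
    \sh{O}_X(1)[1] \to \tau_{\leqq 1}\Cotan_{X/B_2} \xrightarrow{q} \Omega^1_{X/C}
\]
arising from transitivity of the cotangent complex along $X \to \Spa C \to \Spa B_2$. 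The task is therefore to construct, canonically from $\wtilde{X}$, a section of $q$.

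The deformation provides a Cartesian square
\[
\begin{tikzcd}
    X \ar[r, hook, "i"] \ar[d] & \wtilde{X} \ar[d] \\
    \Spa C \ar[r, hook] & \Spa B_2
\end{tikzcd}
\]
in which $\wtilde{X}/B_2$ is automatically smooth (infinitesimal flat lifts of smooth spaces are smooth), so $\Cotan_{\wtilde{X}/B_2} \reqv \Omega^1_{\wtilde{X}/B_2}$ is a vector bundle in degree zero; and the closed immersion $i$ is cut out by the non-zero divisor $\xi$, so $\Cotan_{X/\wtilde{X}} \reqv \sh{O}_X(1)[1]$. Transitivity along $X \to \wtilde{X} \to \Spa B_2$ therefore yields a fiber sequence
\[
    i^*\Omega^1_{\wtilde{X}/B_2} \xrightarrow{\iota} \Cotan_{X/B_2} \to \sh{O}_X(1)[1].
\]

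Next, flat base change for the analytic cotangent complex applied to the Cartesian square (valid by flatness of $\wtilde{X}/B_2$) produces a canonical equivalence $\psi \colon i^*\Omega^1_{\wtilde{X}/B_2} \reqv \Omega^1_{X/C}$. The crux of the argument is then to check that the composite $q \circ \iota$ coincides with $\psi$: both are instances of naturality of the cotangent complex applied to the commutative square, concretely sending a relative differential on $\wtilde{X}/B_2$ to its restriction to $X$ viewed relatively over $C$. Granting this compatibility, the morphism $s := \iota \circ \psi^{-1} \colon \Omega^1_{X/C} \to \Cotan_{X/B_2}$ is a section of $q$ depending functorially on $\wtilde{X}$, and the desired canonical splitting follows.

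The main obstacle will be precisely this identification $q \circ \iota = \psi$. Conceptually it is forced by naturality, but making it rigorous requires ensuring that base change and transitivity for the analytic cotangent complex $\Cotan$ interact as they do in the purely algebraic setting; this is where one must carefully invoke the properties of $\Cotan$ recalled in the appendix rather than those of the purely algebraic $\Cot$.
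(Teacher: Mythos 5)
Your proposal is correct and follows essentially the same route as the paper: both reduce via Theorem \ref{thm::cotangent_ht} to splitting the triangle $\sh{O}_{X}(1)[1] \to \tau_{\leqq 1}\Cotan_{X/B_2} \to \Omega^1_{X/C}$ and both extract the splitting from the transitivity triangle of $X \hookrightarrow \wtilde{X} \to \Spa B_2$; the only difference is that you produce a section of the projection onto $\Omega^1_{X/C}$ out of $\LL\iota^*\Cotan_{\wtilde{X}/B_2}$, while the paper produces the equivalent datum of a retraction onto $\sh{O}_{X}(1)[1]$ out of the projection to $\Cotan_{\wtilde{X}/X}$. The compatibility you flag as the main obstacle is settled in the paper not by flat base change but by the direct observation that the connecting map $\mathcal{H}_1(\Cotan_{\wtilde{X}/X}) \to \mathcal{H}_0(\LL\iota^*\Cotan_{\wtilde{X}/B_2})$ sends $\xi \mapsto \dd\xi = 0$ by $B_2$-linearity of the differentials, which is precisely the statement that your composite $q \circ \iota$ is an isomorphism on $\mathcal{H}_0$.
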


\begin{proof}
    We now need the other fiber sequence associated to the flat lift $\iota
    \colon X \inj \wtilde{X}$.  Namely, we consider the composition $X \inj
    \wtilde{X} \to \Spa B_2$ which yields a fiber sequence
    \[
        \LL\iota^* \Cotan_{\wtilde{X}/B_2} \to
        \Cotan_{X/B_2} \to
        \Cotan_{\wtilde{X}/X}
    \]
    and since $\iota$ is a closed immersion with ideal 
    $\sh{O}_{X}(1)$ we have
    $\tau_{\leqq 1}\Cotan_{\wtilde{X}/X} = \sh{O}_{X}(1)[1]$.
    We claim that the induced map 
    $\tau_{\leqq 1}\Cotan_{X/B_2}(-1)[-1] \to
    \tau_{\leqq 1}\Cotan_{\wtilde{X}/X}(-1)[-1] \to \sh{O}_{X} $
    splits the inclusion of $\sh{O}_{X} \to \tau_{\leqq 1}\Cotan_{X/B_2}(-1)[-1]$.
    That is, we must show that the composite
    (beware, this is no triangle)
    \[
    \begin{tikzcd}
        f^*\Cotan_{B_2/C} \ar[r, "\delta"] &
        \Cotan_{X/B_2} \ar[r, "\alpha"] & 
        \Cotan_{\wtilde{X}/X} 
    \end{tikzcd}
    \]
    is an equivalence ($f$ being the structure morphism $f \colon X \to \Spa
    C$) induces an isomorphism on degree $1$.

    Let $\mathcal{H}$ denote the cohomology of complexes (as opposed to
    hypercohomology).  The $\mathcal{H}_1$ are isomorphic to $\sh{O}_{X}(1)$, a
    line bundle, and therefore it is enough to show that the induced morphism
    is surjective.  We see that $\delta$ is an isomorphism from its defining
    fiber sequence. For $\alpha$ we consider the exact sequence
    \[
        \mathcal{H}_1(\Cotan_{X/B_2}) \xrightarrow{\alpha}
        \mathcal{H}_1(\Cotan_{\wtilde{X}/X}) \to
        \mathcal{H}_0(\LL\iota^* \Cotan_{\wtilde{X}/B_2}),
    \]
    and it is enough to show that the last arrow are zero to see surjectivity.
    This is given by the differential
    \[
        \mathcal{H}_1(\Cotan_{\wtilde{X}/X}) \isoto
        \xi \sh{O}_{\wtilde{X}} \xrightarrow{\dd \otimes 1}
        \Omega^1_{\wtilde{X}/B_2} 
        \otimes_{\sh{O}_{\wtilde{X}}} \sh{O}_{X} \isoto
        \mathcal{H}_0(\LL\iota^* \Cotan_{\wtilde{X}/B_2})
    \]
    so it suffices to note that $\dd \xi = 0$ since the differentials are 
    $B_2$-linear.
\end{proof}

\begin{corollary}
    \label{cor::split_ht}
    Let $X$ be a smooth rigid-analytic variety over $C$.
    Then any flat lift $X \inj \wtilde{X}$ to $\Bdr^+/\xi^2$
    induces an equivalence of $\bb{E}_\infty$-algebras
    \[
        \Sym \wtilde\Omega^1_X [-1] \reqv 
        \RR\nu_* \what{\sh{O}}_X.
    \]
\end{corollary}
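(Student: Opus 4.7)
The plan is to feed the splitting provided by Theorem \ref{thm::bdr_obstruction} into the universal property of $\Sym$ and then check the resulting map of $\bb{E}_\infty$-algebras is an equivalence by passing to the associated graded of the Hodge-Tate filtration.

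More precisely, the deformation $X \inj \wtilde{X}$ over $\Bdr^+/\xi^2$ yields, via Theorem \ref{thm::bdr_obstruction}, a canonical splitting of the cofiber sequence $\sh{O}_X \to \tau_{\leqq 1}\Cotan_{X/B_2}(-1)[-1] \to \wtilde\Omega^1_X[-1]$. Combined with the identification $\tau_{\leqq 1}\Cotan_{X/B_2}(-1)[-1] \reqv \tau^{\leqq 1}\RR\nu_*\what{\sh{O}}_X$ from Theorem \ref{thm::cotangent_ht}, this gives a morphism
\[
    \wtilde\Omega^1_X[-1] \to \tau^{\leqq 1}\RR\nu_*\what{\sh{O}}_X \to \RR\nu_*\what{\sh{O}}_X
\]
in $\icat{D}(X_\et)$. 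Since $\RR\nu_*\what{\sh{O}}_X$ is an $\bb{E}_\infty$-algebra, the universal property of the free $\bb{E}_\infty$-algebra provides a unique extension
\[
    \Psi \colon \Sym \wtilde\Omega^1_X[-1] \to \RR\nu_*\what{\sh{O}}_X
\]
in $\mathrm{CAlg}(\icat{D}(X_\et))$.

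It remains to show $\Psi$ is an equivalence. I would endow the source with the Hodge-Tate-like filtration coming from its natural grading (or equivalently the Postnikov filtration, which for $\Sym \wtilde\Omega^1_X[-1]$ coincides by Lemma \ref{lemma::coconnective_illusie}) and check that $\Psi$ respects the filtrations. On the associated graded, the induced map in degree one is exactly the section $\wtilde\Omega^1_X[-1] \to \RR^1\nu_*\what{\sh{O}}_X[-1]$ we just constructed, which is an isomorphism by Theorem \ref{thm::cotangent_ht}. By Theorem \ref{thm::local_ht} the associated graded of $\RR\nu_*\what{\sh{O}}_X$ is freely generated over $\sh{O}_X$ by $\RR^1\nu_*\what{\sh{O}}_X[-1]$ as an $\bb{E}_\infty$-algebra, so the map on associated gradeds is an isomorphism in all degrees. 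Since both filtrations are exhaustive and bounded below, $\Psi$ is itself an equivalence.

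The main technical subtlety I anticipate is not the formal manipulation but keeping track of the fact that the splitting constructed in Theorem \ref{thm::bdr_obstruction} is genuinely the correct splitting on $\tau^{\leqq 1}$, so that its image under the universal property induces the identity on $\RR^1\nu_*\what{\sh{O}}_X$ after Theorem \ref{thm::cotangent_ht}. Once that compatibility is verified the rest is formal, in particular one does not need to worry about higher symmetric powers thanks to Lemma \ref{lemma::coconnective_illusie} which guarantees the formality of $\Sym \wtilde\Omega^1_X[-1]$ in characteristic zero.
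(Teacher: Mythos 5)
Your proposal is correct and follows essentially the same route the paper takes: the splitting from Theorem \ref{thm::bdr_obstruction} is transported through Theorem \ref{thm::cotangent_ht} to a section $\wtilde\Omega^1_X[-1]\to\tau^{\leqq 1}\RR\nu_*\what{\sh{O}}_X$, extended to $\Sym$ by the universal property, and checked to be an equivalence on the associated graded of the Hodge--Tate filtration via Theorem \ref{thm::local_ht} (this is exactly the mechanism of the corollary following Theorem \ref{thm::local_ht}, combined with Corollary \ref{cor::A(X)_sym}). No gaps.
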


We now tackle the problem of actually producing lifts
to $\Bdr^+/\xi^2$, and therefore splittings of the
Hodge-Tate filtration.
This following criterion follows from the description of 
the Galois-invariants of $\Bdr^+/\xi^2$.

\begin{lemma}
    Suppose that $X$ is defined over $K$, 
    a finite extension of $\bb{Q}_p$.
    Then there is a canonical lift $X \inj \wtilde{X}$ to 
    $\Bdr^+$, and hence to $\Bdr^+/\xi^2$.
\end{lemma}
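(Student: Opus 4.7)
The plan is to exploit the canonical embedding $\overline{K} \hookrightarrow \Bdr^+$, which is standard in $p$-adic Hodge theory and is the content of the parenthetical remark following the main theorem statement. Since $X$ is defined over $K$, by assumption there exists a rigid-analytic variety $X_0/\Spa K$ such that $X \isoto X_0 \times_{\Spa K} \Spa C$. The strategy is to base change $X_0$ directly along a lift of the structure morphism $K \hookrightarrow C$ to $K \hookrightarrow \Bdr^+$.

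First, I would recall (or construct) the canonical embedding $\overline{K} \hookrightarrow \Bdr^+$ that sections the surjection $\theta_2 \colon \Bdr^+ \surj C$ over $\overline{K}$. The clean way is to note that $\Bdr^+$ is a complete discrete valuation ring with residue field $C$, hence Henselian: any finite étale $C$-algebra lifts uniquely to a finite étale $\Bdr^+$-algebra. Applying this to every finite subextension of $\overline{K}/\bb{Q}_p$ and taking the colimit yields a canonical ring map $\overline{K} \hookrightarrow \Bdr^+$ sectioning the residue map. In particular we get the required $K \hookrightarrow \Bdr^+$.

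Next, I would construct $\wtilde{X}$ as the base change
\[
    \wtilde{X} \;=\; X_0 \times_{\Spa K} \Spa \Bdr^+,
\]
where $\Spa \Bdr^+$ is taken with respect to the canonical topology on $\Bdr^+$ coming from the $\xi$-adic topology on $\Ainf$ (localized at $p$), and the structure map $\Spa \Bdr^+ \to \Spa K$ is induced by the embedding of Step~1. Locally, if $U \subset X_0$ is affinoid with $U \isoto \Spa(A, A^\circ)$ and $A = K\langle T_1, \dots, T_n \rangle / I$, then the base change is $\Spa(A \what\otimes_K \Bdr^+, \cdots)$, where coordinates are allowed coefficients in $\Bdr^+$; this is a well-defined object in the category of adic spaces over $\Spa \Bdr^+$ of locally topologically finite type. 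The cartesian square required by the definition of a lift is immediate, since by construction $\wtilde{X} \times_{\Spa \Bdr^+} \Spa C = X_0 \times_{\Spa K} \Spa C = X$ using that $K \hookrightarrow \Bdr^+ \surj C$ agrees with $K \hookrightarrow C$.

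Flatness is essentially automatic: $X_0/\Spa K$ is smooth, hence flat, and flatness is preserved by base change; moreover $\Bdr^+$ is flat over $K$ (torsion-free over a field). Finally, restricting along $\Spa \Bdr^+/\xi^2 \hookrightarrow \Spa \Bdr^+$ produces the desired lift to $\Bdr^+/\xi^2$. The only nontrivial step is the existence of the canonical embedding $\overline{K} \hookrightarrow \Bdr^+$, but this is a classical fact of $p$-adic Hodge theory; the rest of the argument is purely formal base change in the category of adic spaces.
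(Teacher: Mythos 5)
Your proof is correct and follows essentially the same route as the paper: produce a (continuous) ring map $K \to \Bdr^+$ sectioning $\Bdr^+ \surj C$ over $K \inj C$, then obtain $\wtilde{X}$ by the automatically flat base change of the $K$-model and restrict to $\Bdr^+/\xi^2$. The only difference is cosmetic — you construct the section explicitly via Henselianness of the complete discrete valuation ring $\Bdr^+$, where the paper simply invokes the classical identification of $K$ with the $G_K$-invariants of $\Bdr^+$ (and, like the paper, you should note that continuity of $K \to \Bdr^+$ is what makes this work for $K$ finite over $\bb{Q}_p$ but fail for $C$).
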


\begin{proof}
    It suffices to find a \emph{continuous} map making the diagram
    \begin{equation*}
    \begin{tikzcd}
         \Bdr^+/\xi^2 \ar[r] & C \\
                 K \ar[u, dashed] \ar[ur]
    \end{tikzcd}
    \end{equation*}
    commute, since the lift will be given by the base-change from $K$ to
    $\Bdr^+/\xi^2$, which is automatically flat.

    Now this follows from the study of $\Bdr^+$ and the identification of $K$
    with the $G_K$-invariants of $\Bdr^+$.  We note that this crucially fails
    for $C$, as there is no continuous ring homomorphism $C \to \Bdr^+$.
\end{proof}

\begin{lemma}
    \label{lemma::etale_lift}
    Let $X,Y$ be  rigid spaces over $C$ and suppose there 
    exists an \'etale map
    \begin{equation*}
        f\colon Y \to X 
    \end{equation*}
    and that $X$ admits a lift $X \inj \wtilde{X}$ to $\Bdr^+/\xi^2$.
    Then $Y$ admits a lift $Y \inj \tilde{Y}$ to $\Bdr^+/\xi^2$
    which is even \'etale over $\wtilde{X}$.
    This lift is functorial on $X_\et$.
\end{lemma}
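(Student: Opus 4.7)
The plan is to deduce this from the topological invariance of the étale site under nilpotent thickenings (in particular, along square-zero closed immersions). Since $X \hookrightarrow \wtilde{X}$ is a closed immersion cut out by the square-zero ideal sheaf $\xi \sh{O}_{\wtilde{X}} \cong \sh{O}_{X}(1)$, I expect the étale site of $\wtilde{X}$ to be canonically equivalent to that of $X$ via pullback along $X \hookrightarrow \wtilde{X}$. Granting this, the étale map $f \colon Y \to X$ lifts uniquely up to unique isomorphism to an étale map $\tilde{f} \colon \tilde{Y} \to \wtilde{X}$ whose pullback along $X \hookrightarrow \wtilde{X}$ recovers $f$. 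Uniqueness of such a lift yields the functoriality statement on $X_\et$ for free.

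First I would recall or cite the invariance of the étale site for adic/rigid spaces under square-zero thickenings, which is the analogue of the classical result in algebraic geometry that étale morphisms are rigid under infinitesimal extensions (deformation-theoretically, the obstruction to lifting an étale morphism lies in an $\Ext$ group of the cotangent complex which vanishes since $\Cotan_{Y/X} \simeq 0$). This produces $\tilde{Y} \to \wtilde{X}$ étale together with a canonical identification $\tilde{Y} \times_{\wtilde{X}} X \isoto Y$.

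Next I must check that the resulting $\tilde{Y}$ is a flat lift of $Y$ to $\Bdr^+/\xi^2$. Flatness of $\tilde{Y} \to \Spa \Bdr^+/\xi^2$ is immediate from the composition: $\tilde{Y} \to \wtilde{X}$ is étale (hence flat) and $\wtilde{X} \to \Spa \Bdr^+/\xi^2$ is flat by hypothesis. The identification $\tilde{Y} \times_{\Spa \Bdr^+/\xi^2} \Spa C = \tilde{Y} \times_{\wtilde{X}} (\wtilde{X} \times_{\Spa \Bdr^+/\xi^2} \Spa C) = \tilde{Y} \times_{\wtilde{X}} X \isoto Y$ exhibits $\tilde{Y}$ as the desired deformation. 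Functoriality on $X_\et$ follows from the uniqueness in the equivalence of étale sites: given a second $g \colon Y' \to X$ étale with chosen lift $\tilde{Y}'$, and an $X$-morphism $Y \to Y'$, there exists a unique $\wtilde{X}$-morphism $\tilde{Y} \to \tilde{Y}'$ lifting it.

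The main (and really only) obstacle is justifying the invariance of the étale site under the thickening $X \hookrightarrow \wtilde{X}$ in the present adic/rigid setting over $\Bdr^+/\xi^2$. This is the analogue of \StacksP{04DY} in algebraic geometry and is well-established for rigid/adic spaces; I would either appeal to the relevant reference in Huber or Scholze--Weinstein, or verify it by hand locally by lifting affinoid étale algebras $A \to B$ through the surjection $\wtilde{A} \surj A$ with square-zero kernel, where standard étale algebras lift uniquely and the lifts glue by uniqueness.
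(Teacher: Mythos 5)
Your proposal is correct and follows essentially the same route as the paper: both invoke the topological invariance of the \'etale site along the square-zero thickening $X \inj \wtilde{X}$ to produce a unique \'etale lift $\wtilde{Y} \to \wtilde{X}$, deduce flatness over $\Bdr^+/\xi^2$ by composition, and obtain functoriality from uniqueness. Your write-up merely spells out the base-change identification and the deformation-theoretic justification in more detail than the paper's terse proof.
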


\begin{proof}
    This follows by the topological invariance of the
    \'etale site, since the extension
    $X \inj \wtilde{X}$ is square-zero.
    That is, given an \'etale morphism $Y \to X$
    there is a \emph{unique} extension $\wtilde{Y} \to \wtilde{X}$
    which is \'etale, and hence flat over $\Bdr^+/\xi^2$.
    Functoriality follows immediately from the uniqueness of the 
    \'etale lift.
\end{proof}

\begin{remark}
    Using more advanced techniques one can show that, in fact,
    all proper (or more generally compactifiable) rigid spaces $X/C$ 
    admit a flat deformation to $\Bdr^+/\xi^2$.
    This is proven in \cite[Thm. 7.4.4]{haoyang2019ht}.
\end{remark}

\begin{corollary}
    [The Hodge-Tate decomposition]
    \label{cor::ht}
    Let $X$ be a proper, smooth rigid-analytic variety over $C$.
    Then any lift $X \inj \wtilde{X}$ to $\Bdr^+/\xi^2$
    induces a Hodge-Tate decomposition
    \[
        \tag{\texttt{hodge}}
        \HH^n(X,C) \isoto \bigoplus_{i+j=n} 
            \HH^i(X,\wtilde\Omega^j_X).
    \]
    If $X$ is defined over $\Spa K$ for $K$ a finite extension of
    $\bb{Q}_p$ and $C = \bb{C}_p$, this is also equivariant for the Galois 
    action on both sides.
\end{corollary}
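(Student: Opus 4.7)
The plan is to assemble the decomposition by combining the primitive comparison theorem with the splitting of the Hodge-Tate filtration obtained in Corollary \ref{cor::split_ht}. Concretely, I would first invoke the primitive comparison theorem (Theorem \ref{thm::primitive_comparison}) which, for $X$ proper and smooth over $C$, identifies
\[
    \HH^n(X,C) \otimes_C \what{\sh{O}}_X \reqv \HH^n_{\qproet}(X, \what{\sh{O}}_X),
\]
or more simply $\HH^n(X,C) \isoto \HH^n_{\qproet}(X, \what{\sh{O}}_X)$ once we are over $C$. Then I would rewrite the right hand side using the Leray spectral sequence for the projection $\nu \colon X_\qproet \to X_\et$, which gives
\[
    \RR\Gamma_{\qproet}(X, \what{\sh{O}}_X) \eqv \RR\Gamma_\et(X, \RR\nu_* \what{\sh{O}}_X).
\]

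Next I would plug in Corollary \ref{cor::split_ht}: the choice of flat lift $X \inj \wtilde{X}$ to $\Bdr^+/\xi^2$ produces an equivalence of $\bb{E}_\infty$-algebras
\[
    \RR\nu_* \what{\sh{O}}_X \eqv \Sym \wtilde\Omega^1_X[-1],
\]
and by Lemma \ref{lemma::coconnective_illusie} the latter is formal and splits as $\bigoplus_{j\geq 0} \wtilde\Omega^j_X[-j]$. Applying $\RR\Gamma_\et$ and using that étale cohomology commutes with direct sums in this bounded coherent setting yields
\[
    \HH^n_{\qproet}(X, \what{\sh{O}}_X) \isoto \bigoplus_{i+j=n} \HH^i(X, \wtilde\Omega^j_X),
\]
which, combined with the first step, gives the desired Hodge-Tate decomposition. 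All of this is canonical once the lift is fixed, since every equivalence above was.

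For the Galois-equivariance statement, suppose $X$ descends to $K \subset C = \bb{C}_p$. The relevant lift to $\Bdr^+/\xi^2$ is the canonical one constructed in the lemma preceding this corollary, obtained from the canonical continuous embedding $K \inj \Bdr^+$ identifying $K$ with $(\Bdr^+)^{G_K}$. Because this embedding is $G_K$-equivariant (it is the unique such map), the resulting deformation $\wtilde{X} = X \times_{\Spa K} \Spa(\Bdr^+/\xi^2)$ is canonical, and therefore all the ingredients assembled above (primitive comparison, the computation of $\RR\nu_* \what{\sh{O}}_X$ via the cotangent complex in Theorem \ref{thm::cotangent_ht}, and the splitting map in Theorem \ref{thm::bdr_obstruction}) respect the induced $G_K$-actions. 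Hence the decomposition is Galois-equivariant. The only slightly delicate point is checking that the splitting produced in Theorem \ref{thm::bdr_obstruction} is functorial enough to transport $G_K$ actions — but this follows from its characterization in terms of the transitivity sequence for $X \inj \wtilde{X} \to \Spa(\Bdr^+/\xi^2)$, which is manifestly natural in the lift; no further work is needed since each $\sigma \in G_K$ acts on the entire diagram compatibly.
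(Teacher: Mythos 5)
Your proposal is correct and follows essentially the same route as the paper: the paper's own proof is simply ``apply $\RR\Gamma_\et$ to Corollary \ref{cor::split_ht} and use the primitive comparison theorem (Theorem \ref{thm::primitive_comparison}), with Galois equivariance following from the canonicity of the lift for varieties defined over $K$'' --- you have just spelled out the intermediate steps (the identification $\RR\Gamma_\qproet(X,\what{\sh{O}}_X) \eqv \RR\Gamma_\et(X,\RR\nu_*\what{\sh{O}}_X)$ and the formality of $\Sym\wtilde\Omega^1_X[-1]$) in more detail.
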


\begin{proof}
    Apply $\RR\Gamma_\et$ to Corollary 
    \ref{cor::split_ht}
    and use the primitive comparison theorem
    (Theorem \ref{thm::primitive_comparison}).
    The Galois equivariance follows from the canonicity of the
    lift for varieties defined over $K$.
\end{proof}

\subsection{Finishing the proof}
Now we have all the ingredients for proving the unipotent Simpson
correspondence.  We establish in many different forms for ease of use, and draw
some basic consequences of it.

\begin{theorem}
    [The unipotent correspondence]
    \label{main_theorem}
    Let $X$ be a smooth rigid-analytic space over $C$, endowed with a lift
    $\wtilde{X}$ to $\Bdr^+/\xi^2$.  There is canonical exact equivalence of
    symmetric monoidal abelian categories
    \[
        \Higgs(X)^\uni \reqv \VB(X_\qproet)^\uni;.
    \]
    between unipotent Higgs bundles and unipotent quasi-pro-\'etale vector
    bundles. This equivalence is natural for morphisms which can 
    be lifted to the deformations.

    Similarly, under the same assumptions, there is a canonical equivalence
    of symmetric monoidal stable infinity categories
    \[
        \DHiggs(X)^\uni \reqv \DPerf(X_\qproet)^\uni;
    \]
    between derived unipotent Higgs bundles and derived unipotent
    quasi-pro-\'etale vector bundles. This equivalence is natural for morphisms
    which can be lifted to the deformations.
\end{theorem}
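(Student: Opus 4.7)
The plan is to assemble the theorem by concatenating three equivalences already established in the paper. Concretely, I will produce the composite chain
\[
    \DHiggs(X)^\uni
    \xrightarrow{\sh{A}}
    \Mod{\sh{A}(X,\sh{O}_{X})}^\uni
    \reqv
    \Mod{\Sym \wtilde\Omega^1[-1]}^\uni
    \xrightarrow{\Psi_*}
    \Mod{\RR\nu_*\what{\sh{O}}_X}^\uni
    \xleftarrow{\RR\nu_*}
    \DPerf(X,\what{\sh{O}}_X)^\uni
\]
of symmetric monoidal equivalences of stable $\infty$-categories, and then extract the non-derived statement from it.

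The first equivalence is Proposition \ref{prop::Higgs_A_mod} and is already symmetric monoidal. The second equivalence comes from Corollary \ref{cor::A(X)_sym}, which identifies $\sh{A}(X,\sh{O}_{X})$ with $\Sym \wtilde\Omega^1[-1]$ as $\bb{E}_\infty$-algebras; pushing forward modules along such an isomorphism gives a symmetric monoidal equivalence. The third equivalence $\Psi$ is precisely the isomorphism of $\bb{E}_\infty$-algebras produced in Corollary \ref{cor::split_ht} from the chosen lift $\wtilde{X}$; this is the only step that uses the deformation data. The fourth equivalence is Proposition \ref{prop::bundles_and_modules}. The composite is a canonical symmetric monoidal equivalence $\DHiggs(X)^\uni \reqv \DPerf(X,\what{\sh{O}}_X)^\uni$, and applying Theorem \ref{thm::qproet_vv_bundles} identifies $\DPerf(X,\what{\sh{O}}_X) \simeq \DPerf(X_\qproet)$ on the right-hand side; this proves the derived statement.

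To deduce the non-derived statement, I will use the characterizations furnished by Propositions \ref{prop::Higgs_A_mod} and \ref{prop::bundles_and_modules}: both $\Higgs(X)^\uni \subset \DHiggs(X)^\uni$ and $\VB(X_\qproet)^\uni \subset \DPerf(X_\qproet)^\uni$ are described intrinsically as the smallest full subcategory containing the unit and closed under fiber sequences whose three terms are concentrated in degree zero. Since the composite equivalence above is exact, symmetric monoidal, and sends unit to unit, it restricts to an equivalence of these subcategories, yielding the first assertion. Naturality on both statements with respect to morphisms that lift to the deformations follows from the functoriality clauses in Lemma \ref{lemma::etale_lift}, Theorem \ref{thm::bdr_obstruction}, and Proposition \ref{prop::higgs_descent}, since each step in the chain above is visibly functorial in $X$ once the splitting $\Psi$ has been fixed.

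The main obstacle, in the author's opinion already handled by the preparations, is verifying that the splitting $\Psi$ of Corollary \ref{cor::split_ht} is indeed an equivalence of $\bb{E}_\infty$-algebras (and not merely of filtered complexes), so that taking module categories produces a symmetric monoidal equivalence. The remaining bookkeeping is the observation that the non-derived subcategories on both sides admit a purely $\infty$-categorical description in terms of the unit and fiber sequences, which is precisely the content of the identifications in Propositions \ref{prop::Higgs_A_mod} and \ref{prop::bundles_and_modules}; once this is in place no further analytic input is needed.
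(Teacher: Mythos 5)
Your proposal is correct and follows essentially the same route as the paper: the commutative square with vertical inclusions from Propositions \ref{prop::Higgs_A_mod} and \ref{prop::bundles_and_modules}, the bottom equivalence induced by the $\bb{E}_\infty$-algebra isomorphism of Corollaries \ref{cor::A(X)_sym} and \ref{cor::split_ht}, and the observation that the non-derived subcategories are characterized intrinsically (smallest subcategory containing the unit and closed under extensions in the sense of fiber sequences) and hence preserved by the equivalence. Your write-up merely spells out what the paper's terse proof leaves implicit; the only superfluous step is the appeal to Theorem \ref{thm::qproet_vv_bundles}, since $\DPerf(X,\what{\sh{O}}_X)$ already denotes perfect complexes on the quasi-pro-\'etale site.
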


\begin{proof}
    Follows from corollary \ref{cor::split_ht} together with the results above,
    by noting that both the $1$-categorical and derived unipotent objects are
    preserved under monoidal categorical equivalences, and that
    \[
        \Sym \wtilde\Omega^1_X[-1] \reqv \sh{A}(X,\sh{O}_{X}) 
    \]
    as $\bb{E}_\infty$-algebras
    (Corollary \ref{cor::A(X)_algebra}
    and Lemma \ref{lemma::coconnective_illusie}).
\end{proof}

\begin{corollary}
    Let $X$ be as above.
    There are symmetric monoidal equivalences
    \[
        \Higgs(X)^\luni \reqv \VB(X_\qproet)^\luni; \quad
        \DHiggs(X)^\luni \reqv \DPerf(X_\qproet)^\luni;
    \]
    between \emph{locally} unipotent objects in each category.
\end{corollary}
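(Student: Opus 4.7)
The plan is to deduce this from the main Theorem \ref{main_theorem} by étale descent. Both sides of each proposed equivalence are already known to form stacks on $X_\et$: for Higgs bundles this is Proposition \ref{prop::higgs_descent}, and the (locally) unipotent versions are subsheaves obtained via sheafification; for the quasi-pro-\'etale side, $\VB(\var_\qproet)$ and $\DPerf(\var_\qproet)$ satisfy étale descent as well. By definition, an object is locally unipotent precisely when it becomes unipotent after pullback to some étale cover, so it suffices to produce a compatible family of equivalences on an étale cover and check cocycle compatibility.

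First, I would fix an étale cover $\{U_i \to X\}$ such that the lift $\wtilde{X}/\Bdr^+/\xi^2$ exists (which is the given data) and apply Lemma \ref{lemma::etale_lift} to obtain canonical (and functorial) étale lifts $\wtilde{U}_i \to \wtilde{X}$. This yields deformations of all fiber products $U_i \times_X U_j$, $U_i \times_X U_j \times_X U_k$, etc., and the face/degeneracy maps in this Čech simplicial object lift canonically to the deformations by the uniqueness clause in Lemma \ref{lemma::etale_lift}. Now Theorem \ref{main_theorem} produces, on each piece, a symmetric monoidal equivalence
\[
    \Higgs(U_{i_1 \cdots i_n})^\uni \reqv \VB((U_{i_1 \cdots i_n})_\qproet)^\uni
\]
and analogously for the derived version, and the naturality statement in that theorem with respect to lifted morphisms ensures that these equivalences commute with all face and degeneracy maps in the Čech nerve.

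Next I would invoke descent. Since both functors $U \mapsto \Higgs(U)^\luni$ and $U \mapsto \VB(U_\qproet)^\luni$ are étale sheaves of symmetric monoidal categories and the equivalences on the cover are compatible (forming a morphism of descent data), they glue uniquely to a symmetric monoidal equivalence on $X$. One then checks that this glued functor indeed sends $\Higgs(X)^\luni$ to $\VB(X_\qproet)^\luni$ and conversely: this is immediate, since local unipotence is precisely the condition of trivializing on some étale cover, and our equivalence is induced from the unipotent equivalence after such a cover. The same argument verbatim, replacing $\Higgs$ by $\DHiggs$ and $\VB$ by $\DPerf$, handles the derived statement.

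The main subtlety lies in making sure that the naturality of the equivalence in Theorem \ref{main_theorem} is genuinely strong enough to constitute a morphism of descent data — that is, that the symmetric monoidal equivalences on overlaps are coherently compatible with those on the individual $U_i$, rather than merely isomorphic on homotopy categories. This is built into the construction, since the equivalence on each piece arises from the canonical $\bb{E}_\infty$-algebra isomorphism $\Sym \wtilde\Omega^1_{U_{i_1\cdots i_n}}[-1] \reqv \RR\nu_* \what{\sh{O}}_{U_{i_1\cdots i_n}}$ of Corollary \ref{cor::split_ht}, which is itself functorial in étale maps between deformations, and both sides pull back correctly along étale maps. Granting this coherence, the descent argument produces the desired equivalences.
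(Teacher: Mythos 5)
Your proposal is correct and takes essentially the same route as the paper: both use the functorial \'etale lifts of Lemma \ref{lemma::etale_lift} to upgrade the unipotent correspondence of Theorem \ref{main_theorem} to compatible \'etale-local data, and then conclude by identifying the locally unipotent categories as the sheafifications of the unipotent prestacks. The paper simply invokes functoriality of sheafification (an equivalence of prestacks on $X_\et$ induces an equivalence of their sheafifications) rather than spelling out \v{C}ech descent along a chosen cover, but the substance — including the coherence point you flag, which the paper also leaves implicit — is the same.
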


\begin{proof}
    If we fix a lift of $X$ to $\Bdr^+/\xi^2$, then we also get a lift of each
    $Y \in X_\et$ in a compatible way by Lemma \ref{lemma::etale_lift}.  It
    follows that the correspondence above can be improved to an equivalence of
    pre-stacks on $X_\et$, and hence an equivalence on their sheafifications.
\end{proof}

\begin{corollary}
    Let $X$ be a rigid-analytic $C$-variety which admits a deformation to
    $\Bdr^+/\xi^2$. If $(E,\theta)$ is a locally unipotent Higgs bundle on $X$, 
    and $\wtilde E$ is the correponding quasi-pro-\'etale vector bundle, then 
    \[
        \RR\Gamma_\qproet(X,\wtilde E) 
        \isoto \RR\Gamma_\Dol(X, E).
    \]
\end{corollary}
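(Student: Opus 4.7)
The plan is to recognize both sides as $\RR\Hom$ from the monoidal unit in equivalent symmetric monoidal $\infty$-categories, and then invoke the locally unipotent version of the main theorem just established in the previous corollary. Since all the content has already been packaged, the proof is essentially formal.

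First I would rewrite each side as mapping spaces out of the tensor unit. On the quasi-pro-étale side, $\what{\sh{O}}_X$ is the unit of $\DPerf(X_\qproet,\what{\sh{O}}_X)$, so
\[
    \RR\Gamma_\qproet(X,\wtilde E) \isoto \RR\Hom_{\DPerf(X_\qproet)}(\what{\sh{O}}_X,\wtilde E).
\]
On the Higgs side, unwinding the definitions of $\sh{A}$ and $\RR\Gamma_\Dol$, together with the fact (from Section \ref{sec::derived_higgs}) that the internal hom of $\DHiggs(X)$ is the restriction of the internal hom of $\DPerf(H_X)$ and that $\sh{O}_X$ is the unit of the Hopf-algebra-induced monoidal structure,
\[
    \RR\Gamma_\Dol(X,E) = \RR\Gamma_\et\bigl(X,\pi_*\RR\shHom_{H_X}(\sh{O}_X,E)\bigr) \isoto \RR\Hom_{\DHiggs(X)}(\sh{O}_X,E).
\]

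Next I would apply the previous corollary, which produces a symmetric monoidal equivalence $\DHiggs(X)^\luni \reqv \DPerf(X_\qproet)^\luni$ sending $E$ to $\wtilde E$ by construction. Any equivalence of symmetric monoidal $\infty$-categories sends the unit to the unit and induces equivalences on mapping spaces, so the two $\RR\Hom$-complexes above are canonically identified, which is exactly the claimed isomorphism. Since the correspondence is natural in the étale-local lift (by the functoriality clause of Theorem \ref{main_theorem} together with Lemma \ref{lemma::etale_lift}), no further gluing is required.

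The only conceptual point to verify is that the internal hom on the Higgs side really does agree with $\RR\shHom_{H_X}$ under the embedding $\DHiggs(X)^\luni \inj \Mod{\sh{A}(X,\sh{O}_X)}^\uni$ of Proposition \ref{prop::Higgs_A_mod}, but this is immediate since that embedding is itself symmetric monoidal and $\sh{A}(X,E)$ was defined precisely as $\pi_* \RR\shHom_{H_X}(\sh{O}_X,E)$. Hence there is no hard step; the corollary is a direct cohomological payoff of the module-theoretic reformulation at the heart of the main theorem.
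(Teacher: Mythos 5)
Your argument is correct and is exactly the one the paper leaves implicit: the corollary is stated without proof, but the surrounding machinery (cohomology as $\RR\Hom$ out of the tensor unit, as used in Propositions \ref{prop::bundles_and_modules} and \ref{prop::Higgs_A_mod}, combined with the stack-level equivalence for locally unipotent objects) is precisely your route. The only point worth making explicit is that for merely \emph{locally} unipotent $E$ the identification of mapping complexes is first obtained étale-locally and then glued via descent for the internal homs, but this is automatic from the equivalence of stacks on $X_\et$, as you note.
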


\begin{corollary}
    Let $f \colon X \to Y$ be a morphism of smooth, \emph{proper},
    connected rigid-analytic varieties.
    Let also  $\bar{x}$ be a geometric point of $X$ and
    $\bar{y} = f(\bar{x})$ and suppose that
    $f_* \colon \pi_1(X,\bar{x}) \reqv \pi_1(Y,\bar{y})$ is an 
    equivalence.
    Then we have a symmetric monoidal equivalence of categories
    \[
        \Higgs(Y)^\uni \reqv \Higgs(X)^\uni, \quad
        \DHiggs(Y)^\uni \reqv \DHiggs(X)^\uni.
    \]
    That is, unipotent Higgs bundles are invariant
    under $\pi_1$-equivalences.
\end{corollary}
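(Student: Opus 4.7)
The plan is to chain together the main unipotent correspondence (Theorem \ref{main_theorem}) with the $\pi_1$-invariance of unipotent quasi-pro-\'etale vector bundles that has already been established in Corollary \ref{cor::pi_1_invariance}. First, since $X$ and $Y$ are smooth and proper over $C$, by the spreading-out result of Conrad--Gabber alluded to after the main theorem (see \cite[Thm.~7.4.4]{haoyang2019ht}), I may choose any flat deformations $\wtilde{X}$ and $\wtilde{Y}$ to $\Bdr^+/\xi^2$; crucially, they need not be compatible with $f$, since the final statement only asks for the \emph{existence} of a symmetric monoidal equivalence and not for naturality in $f$.

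Given such deformations, Theorem \ref{main_theorem} supplies canonical symmetric monoidal equivalences
\[
    \Higgs(X)^\uni \reqv \VB(X_\qproet)^\uni, \qquad
    \Higgs(Y)^\uni \reqv \VB(Y_\qproet)^\uni,
\]
together with their derived counterparts $\DHiggs(Z)^\uni \reqv \DPerf(Z_\qproet, \what{\sh{O}}_Z)^\uni$ for $Z \in \{X,Y\}$. The heart of the argument is then to exhibit a symmetric monoidal equivalence $\VB(Y_\qproet)^\uni \reqv \VB(X_\qproet)^\uni$ (and its derived analogue) induced by pullback along $f$; this is precisely the content of Corollary \ref{cor::pi_1_invariance} for the abelian case, and composing the three equivalences gives the first claim of the corollary.

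For the derived statement, I would argue analogously by invoking Theorem \ref{prop::uni_loc_sys}, which (for proper $Z$) identifies $\DPerf(Z_\qproet, \what{\sh{O}}_Z)^\uni \reqv \DPerf(Z_\qproet, C)^\uni$ and further with the derived unipotent objects of $\Rep_C(\pi_1(Z,\bar{z}))$. A $\pi_1$-equivalence $f_*$ induces via restriction of scalars a symmetric monoidal equivalence on continuous $C$-representations, which in turn preserves the monoidal unit and is exact, hence restricts to an equivalence on the stable subcategories generated by the unit, i.e.\ on derived unipotent objects. Chaining all these equivalences gives $\DHiggs(Y)^\uni \reqv \DHiggs(X)^\uni$.

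Essentially everything here is formal given the results already proved in the paper, so there is no real obstacle; the only point requiring a little care is the existence of the deformations on \emph{both} $X$ and $Y$, which is where properness enters. It is worth emphasising that the resulting equivalence depends on the choice of deformations $\wtilde{X}$ and $\wtilde{Y}$, but since both sides are intrinsically defined, the abstract equivalence exists unconditionally once such choices are made.
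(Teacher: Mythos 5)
For the abelian (underived) half your argument is correct and is exactly the paper's: choose flat deformations of $X$ and $Y$ to $\Bdr^+/\xi^2$ (which exist since both are proper), apply Theorem \ref{main_theorem} on each side, and splice in Corollary \ref{cor::pi_1_invariance} for $\VB(Y_\qproet)^\uni \reqv \VB(X_\qproet)^\uni$. Your explicit remark that the two deformations need not be compatible with $f$ (because only the existence of an equivalence is claimed) is a point the paper leaves implicit, and it is worth making.

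The derived half of your proposal has a genuine gap. You route the argument through an identification of $\DPerf(Z_\qproet,C)^\uni$ with the derived unipotent objects in representations of $\pi_1(Z,\bar z)$, and then transport along $f_*$. But the paper explicitly warns (in the remark following Theorem \ref{prop::uni_loc_sys}) that the functor $\DPerf(B\pi_1, C) \to \DPerf(X, C)$ is \emph{not} fully faithful even on unipotent objects: \'etale cohomology of $X$ is not computed by group cohomology of $\pi_1$ unless $X$ is a $K(\pi,1)$ for $p$-adic coefficients. Theorem \ref{prop::uni_loc_sys} only gives $\DPerf(X,C)^\uni \reqv \DPerf(X,\what{\sh{O}}_X)^\uni$; it does not let you descend to $\pi_1$. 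The obstruction is not cosmetic: full faithfulness of $f^*$ on derived unipotent objects would require $\RR\Gamma_\qproet(Y,\what{\sh{O}}_Y) \reqv \RR\Gamma_\qproet(X,\what{\sh{O}}_X)$, i.e.\ an isomorphism on \emph{all} \'etale cohomology with $C$-coefficients, whereas a $\pi_1$-equivalence only controls $\HH^0$ and $\HH^1$. Concretely, for $f \colon \bb{P}^1 \to \Spa(C,\sh{O}_C)$ both fundamental groups are trivial, yet $\RR\Gamma_{\Dol}(\bb{P}^1,\sh{O}) \isoto C \oplus C[-2]$ while $\RR\Gamma_{\Dol}(\pt,\sh{O}) \isoto C$; since a symmetric monoidal equivalence must preserve the endomorphism algebra of the unit, no equivalence $\DHiggs(\pt)^\uni \reqv \DHiggs(\bb{P}^1)^\uni$ can exist. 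So the derived assertion cannot be proved by your route (nor, in fairness, by the paper's one-line appeal to Corollary \ref{cor::pi_1_invariance}, which is only stated for the abelian categories); the derived statement should either be dropped or supplemented with the hypothesis that $f$ induces an isomorphism on all of $\RR\Gamma_\et(\var,C)$.
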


\begin{proof}
    Follows by the equivalence above and \ref{cor::pi_1_invariance}.
\end{proof}

On the same vein, we can now recognise the Tannakian group
associated to $\Higgs(X)^\uni$ as \textit{le $\pi_1$ rendu nilpotent}
    \cite{droite}.
    (see also \cite[\S 10.25]{droite} for the trancendental version
    of this corollary.)

\begin{corollary}
    \label{cor::uni_hull}
    Let $X$ be proper, smooth connected and pointed over $C$.
    The unipotent fundamental group 
    \ref{def::uni_fun_group} is the unipotent hull of 
    $\pi_1(X,\bar{x})$.
\end{corollary}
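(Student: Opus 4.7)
The plan is to chain together the equivalences we have already established and then invoke Tannakian duality. Concretely, the Main Theorem \ref{main_theorem} (with a choice of lift of $X$ to $\Bdr^+/\xi^2$, which exists in the proper case by \cite[Thm.~7.4.4]{haoyang2019ht}) gives a symmetric monoidal equivalence of abelian categories
\[
    \Higgs(X)^\uni \reqv \VB(X_\qproet)^\uni,
\]
and Theorem \ref{prop::uni_loc_sys} identifies the right hand side with $\Rep_C(\pi_1(X,\bar{x}))^\uni$. Composing, we obtain a symmetric monoidal exact equivalence
\[
    T \colon \Higgs(X)^\uni \reqv \Rep_C(\pi_1(X,\bar{x}))^\uni.
\]

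Next I would check that $T$ intertwines the fiber functor $F_x$ used in Definition \ref{def::uni_fun_group} with the forgetful fiber functor $F^{\mathrm{for}} \colon \Rep_C(\pi_1(X,\bar{x}))^\uni \to C\mhyphen\cat{Vect}$ that sends a representation to its underlying vector space. This is where a small check is needed: $F_x$ is the stalk at $\bar{x}$ on the \'etale site, and $F^{\mathrm{for}}$ arises from pullback along $\bar{x} \to B\pi_1$; both are computed by pulling back along the geometric point $\bar{x} \to X$, and the functors in the chain above are all built from pullbacks of sheaves, so the natural isomorphism is essentially formal. I expect this compatibility to be the only genuine (if mild) obstacle; once it is in place, $T$ is an equivalence of neutral Tannakian categories over $C$ with matching fiber functors.

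Applying Tannakian reconstruction to both sides then gives a canonical isomorphism of affine group schemes over $C$
\[
    \pi_1^{\uni}(X,\bar{x}) = \underline{\Aut}^\otimes(F_x) \reqv
        \underline{\Aut}^\otimes(F^{\mathrm{for}}).
\]
Finally, it is a standard fact about profinite groups that the Tannakian group associated to the category of finite dimensional unipotent continuous $C$-representations of a profinite group $G$, equipped with the forgetful fiber functor, is precisely the pro-unipotent completion (unipotent hull) of $G$ over $C$: unipotent representations of $G$ factor through its unipotent algebraic quotients, and these quotients assemble into the pro-unipotent completion. Applying this with $G = \pi_1(X,\bar{x})$ identifies the right hand side with the unipotent hull of $\pi_1(X,\bar{x})$, which concludes the proof.
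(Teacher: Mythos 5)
Your argument is correct and is exactly the route the paper takes: the paper's entire proof is the single line ``essentially by definition, see [Deligne, Eq.~10.24.2]'', which unpacks to precisely your chain --- Theorem \ref{main_theorem} plus Theorem \ref{prop::uni_loc_sys} identify $\Higgs(X)^\uni$ with $\Rep_C(\pi_1(X,\bar{x}))^\uni$, whose Tannakian dual is Deligne's definition of the unipotent hull. The one point you flag, matching $F_x$ with the forgetful fiber functor, can even be sidestepped: over the algebraically closed field $C$ any two fiber functors on a neutral Tannakian category are isomorphic, so the associated affine group schemes agree regardless of whether the equivalence (which is built from module categories over $\bb{E}_\infty$-algebras rather than literal sheaf pullbacks) intertwines them on the nose.
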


\begin{proof}
    This is essentially by definition, see
    \cite[Eq.~10.24.2]{droite}.
\end{proof}

\appendix

\section{The cotangent complex and deformation theory}
Adic spaces admit cotangent sheaves which are defined analogously to schemes,
but taking into consideration the topology of our modules (we want
differentials to be continuous to talk about derivatives of analytic
functions).

For simplicity we omit the superscript and write $\Omega$ intead of $\Omega^1$
in the discussion below.  Also, the analytic cotangent sheaf will carry an
ornament $\Omega^\an$ in this appendix for clearness, but we will drop this in
the main text, since we are only dealing with adic spaces therein.  The
following definition is due to Huber, and it was used to give the first
definition of smooth morphisms of adic spaces.  We refer the reader to
\cite[Sec.~1.6]{huber2013etale} for proofs and further discussions of the
topic.

\begin{definition}
    [Huber {\cite[Def.1.6.1]{huber2013etale}}] 
    Given Huber rings $A,B$ and $A \to B$ a morphism topologically of finite
    type, a \emph{universal derivation} is a continuous map \begin{equation*}
		\dd: B \to \Omega_{B/A}^\an
	\end{equation*}
    into a complete topological $B$-module $\Omega_{B/A}$ which satisfies the
    Lebniz rule, and is universal one such, ie, any other continuous derivation
    onto a complete module $M$ factors uniquely as
	\begin{equation*}
	\begin{tikzcd}
		A \ar[dr] \ar[rr, "\dd"] && 
        \Omega_{B/A}^\an \ar[dl, dashed] \\
		& M 
	\end{tikzcd}
	\end{equation*}
\end{definition}

Universal derivations exist and commute with base change and localization as
expected.  In particular, we can define a cotangent sheaf $\Omega_{X/Y}^\an$
for adic spaces for tft morphisms $f \colon X \to Y$, and its a coherent
$\sh{O}_{X}$-module.  From now on we assume $X,Y$ to be rigid-analytic
varieties over $K$ for simplicity.

\begin{remark}
    Of course, $\Omega_{B/A}^\an$ is to be though as a complete version of the
    usual cotangent sheaf.  The universal property yields a natural map
    $\Omega_{B/A} \to \Omega^\an_{B/A}$ which is not an isomorphism in general.
    One shows that $\Omega^\an_{B/A}$ is the largest finitely generated
    $B$-module quotient of $\Omega_{B/A}$
    \cite[Lemma~7.2.37]{AlmostRingTheory}.  It is therefore an isomorphism
    whenever $B$ is a finite $A$-algebra.
\end{remark}

Concretely, the analytic cotangent sheaf can be computed
via an analogous procedure to the case of algebraic varieties.
First on computes for closed disks $\bb{B}^n_K$,
\[
    \Omega^\an_{K \langle T_i \rangle/K} =
    K \langle T_i \rangle \dd T_1 + \dots +
    K \langle T_i \rangle \dd T_n.
\]
Then one can extend this to define $\Omega_{X/K}^\an$ for all
rigid-analytic varieties in a functorial way,
meaning one can also compute the coderivative and therefore define
the sheaf $\Omega_{Y/X}^\an$ via the exact sequence
\[
\begin{tikzcd}
    \Omega_{Y/K}^\an \ar[r, "\delta_f"]& 
    \Omega_{X/K}^\an \ar[r] & 
    \Omega_{Y/X}^\an \ar[r] &
    0
\end{tikzcd}
\]

In particular, this discussion implies the following 
comparison theorem.

\begin{proposition}
    Let $f \colon Y \to X$ be a morphism of schemes 
    locally of finite type over $K$.
    There is a natural isomorphism
    \begin{equation*}
        (\Omega_{Y/X})^\an \reqv \Omega^\an_{Y^\an/X^\an}.
    \end{equation*}
    between the analytification of the differentials and
    the differentials of the analytification.
\end{proposition}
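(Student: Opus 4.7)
The plan is to construct a natural comparison map from its universal property and then reduce to an explicit local computation on affine space.

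First, I would construct the natural morphism $(\Omega_{Y/X})^\an \to \Omega^\an_{Y^\an/X^\an}$. The universal continuous derivation $\dd \colon \sh{O}_{Y^\an} \to \Omega^\an_{Y^\an/X^\an}$ is $\sh{O}_{X^\an}$-linear, so precomposing with the analytification morphism $\sh{O}_Y \to \pi_*\sh{O}_{Y^\an}$ (where $\pi \colon Y^\an \to Y$) produces an algebraic $f^{-1}\sh{O}_X$-linear derivation from $\sh{O}_Y$ into $\pi_*\Omega^\an_{Y^\an/X^\an}$. By the universal property of $\Omega_{Y/X}$ this factors uniquely through a morphism $\Omega_{Y/X} \to \pi_*\Omega^\an_{Y^\an/X^\an}$ of coherent $\sh{O}_Y$-modules; by adjunction (or by tensoring with $\sh{O}_{Y^\an}$) this yields the required map $(\Omega_{Y/X})^\an \to \Omega^\an_{Y^\an/X^\an}$ of coherent $\sh{O}_{Y^\an}$-modules.

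Next, since both sides are coherent and the question is Zariski-local on $X$ and $Y$ (analytification commutes with open immersions, and so does the formation of $\Omega^\an$ by \cite[Sec.~1.6]{huber2013etale}) I would reduce to the case where $X = \Spec A$, $Y = \Spec B$ with $B$ a finitely generated $A$-algebra. Choosing a presentation $B = A[T_1,\dots,T_n]/I$, one has an algebraic conormal sequence
\[
    I/I^2 \to \Omega_{A[T]/A} \otimes_{A[T]} B \to \Omega_{B/A} \to 0,
\]
and, by the results recalled in the appendix, the same right-exact sequence for $Y^\an \hookrightarrow \bb{A}^{n,\an}_{X^\an}$ on the analytic side. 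The comparison map is compatible with both sequences, and since analytification is right-exact on coherent modules (it is a flat base change on stalks), a diagram chase reduces the assertion to the case of the structural morphism $\bb{A}^n_X \to X$. A further straightforward base change reduces to $X = \Spec K$, $Y = \bb{A}^n_K$.

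The core computation is therefore: on $\bb{A}^{n,\an}_K$, covered by the affinoids $\Spa K\langle r^{-1}T_i\rangle$ (all polyradii), both $\Omega^\an_{\bb{A}^{n,\an}/K}$ and $(\Omega_{\bb{A}^n/K})^\an$ are free on $\dd T_1, \dots, \dd T_n$. On each such affinoid this follows from Huber's explicit formula recalled in the appendix for the left-hand side, and from the fact that analytification of a free module of rank $n$ over $A[T_1,\dots,T_n]$ is free of the same rank on the right. The comparison map sends $\dd T_i$ to $\dd T_i$ by construction, hence is an isomorphism. The main obstacle in executing this plan is the bookkeeping around analytification of coherent sheaves, specifically the right-exactness and the compatibility with base change along $\bb{A}^n_X \to X$; but these are standard once the universal properties have been set up carefully, and the final rank-one check on affinoid polydisks is direct.
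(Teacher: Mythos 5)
Your proposal is correct and follows essentially the same route as the paper: construct the canonical comparison map from the universal property of the continuous derivation, reduce via the (co)normal exact sequences and locality to the structural morphism $\bb{A}^n_K \to \Spec K$, and conclude by the explicit computation that both sides are free on $\dd T_1, \dots, \dd T_n$ over the polydisks. The paper's proof is just a terser version of this, invoking the coderivative sequence and the pullback of the isomorphism $\Omega_{\bb{A}^n_K} \isoto \sh{O}^n$ directly.
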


\begin{proof}
    The analytification of a coherent sheaf is the pullback via the morphism of
    locally ringed spaces $\phi \colon X^\an \to X$, and therefore the result
    reduces by the exact sequence above to the case of $Y = \bb{A}^n_K$ and $X
    = \Spec K$.  Now the result follows by pulling back the isomorphism
    $\Omega_X \isoto \sh{O}_{X}^n$.
\end{proof}

We now move the discussion to the cotangent complex,
and its central role in deformation theory.
We begin by reviewing the definition for non-topological algebras.

\begin{definition}
    [Cotangent complex]
    Let $R$ be a ring and $A/R$ an $A$-algebra.  The \emph{(algebraic)
    cotangent complex} of $A/R$ is defined to be
    \[
        \Cot_{A/R} = \Omega_{P^\bullet / R} \otimes_{P^\bullet} A,
    \]
    where $P^\bullet \to A$ is a choice of cofibrant resolution of $A$
    as a simplicial $R$-module.
\end{definition}

Such resolution always exists. For example any resolution by polynomial 
algebras works and the result is, of course, independent of such choices.
To construct one, that is even functorial, just take 
$P_0 = R[A]$ and $P_{i+1} = R[P_i]$.

In practice, we can reduce the computation of $\Cot_{A/R}$ using 
the following principles.

\begin{proposition}
    Let $A$ be an $R$-algebra.
    The following statements hold.
	\begin{itemize}
		\item If $B/A$ is an $A$-algebra then we have a 
            \emph{transitivity fiber sequence}
		\begin{equation*}
			\Cot_{A/R} \otimes^\LL_A B  \to \Cot_{B/R} \to \Cot_{A/R}
		\end{equation*}
		in $\icat D(B)$.
    \item  If $B = A/I$ and $A$ is a smooth $R$-algebra, we have
        \[
            \tau_{\leqq 1}\Cot_{B/R} = 
            \left [ I/I^2 \to \Omega_{A/R} \right ].
        \]
        Furthermore if $I$ is defined by a regular sequence then 
        $\Cot_{B/A} = \tau_{\leqq 1} \Cot_{B/A}$.
	\end{itemize}
\end{proposition}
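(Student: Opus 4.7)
For the transitivity fiber sequence, the plan is to use the functorial cofibrant replacement of the definition. First, choose a cofibrant simplicial resolution $P^\bullet \to A$ of $A$ as a simplicial $R$-algebra (so each $P^n$ is a polynomial $R$-algebra), and then choose a cofibrant resolution $Q^\bullet \to B$ of $B$ as a simplicial $P^\bullet$-algebra. The key observation is that the composite $Q^\bullet \to B$ is then automatically a cofibrant resolution of $B$ as a simplicial $R$-algebra. In each simplicial degree we have a classical short exact sequence of Kähler differentials
\[
    0 \to \Omega_{P^n/R} \otimes_{P^n} Q^n \to \Omega_{Q^n/R} \to \Omega_{Q^n/P^n} \to 0
\]
since $Q^n$ is polynomial over $P^n$. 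Tensoring through by $B$ over $Q^\bullet$ and passing to the geometric realization yields the desired fiber sequence, once one identifies $\Omega_{P^\bullet/R} \otimes_{P^\bullet} Q^\bullet \otimes_{Q^\bullet} B \simeq \Cot_{A/R} \otimes^\LL_A B$ (using cofibrancy of $P^\bullet$ and the projection formula).

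For the second point, I would apply the transitivity fiber sequence to $R \to A \to B = A/I$:
\[
    \Cot_{A/R} \otimes^\LL_A B \to \Cot_{B/R} \to \Cot_{B/A}.
\]
Since $A/R$ is smooth, $\Cot_{A/R} \simeq \Omega_{A/R}$ is concentrated in degree zero and is a projective $A$-module, so its derived tensor product with $B$ is just $\Omega_{A/R} \otimes_A B$, again concentrated in degree zero. Hence $\tau_{\leqq 1}\Cot_{B/R}$ is entirely controlled by $\tau_{\leqq 1}\Cot_{B/A}$, and the claim reduces to computing the latter for a surjection $A \twoheadrightarrow B = A/I$, together with identifying the connecting map with the natural inclusion $I/I^2 \hookrightarrow \Omega_{A/R} \otimes_A B$.

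The main obstacle, and the key computation, is therefore the identification $\tau_{\leqq 1}\Cot_{B/A} \simeq I/I^2[1]$ for a surjection with kernel $I$. My plan is to choose a polynomial $A$-algebra surjection $P = A[x_\alpha] \twoheadrightarrow B$ where the $x_\alpha$ map to a set of generators of $I$ (well, of $B$ as a quotient, but effectively of $I$), begin a simplicial free resolution, and read off $\pi_0 \Cot_{B/A} = \Omega_{B/A} = 0$ (surjections have no differentials) and $\pi_1 \Cot_{B/A} \simeq I/I^2$ by inspecting the kernel of the first face map. For the regular sequence case, I would refine this by taking $P^\bullet$ to be the (simplicial algebra associated to the) Koszul complex on $f_1,\dots,f_n$, which is a genuine free resolution of $B$ as a simplicial $A$-algebra precisely because $(f_1,\dots,f_n)$ is regular. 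Computing $\Omega_{P^\bullet/A} \otimes_{P^\bullet} B$ then directly shows that $\Cot_{B/A}$ is concentrated in degree $1$ and equal to $I/I^2$, free of rank $n$ over $B$. The subtle point is verifying that the differential in the Koszul-based computation vanishes in degrees $\geq 2$, which uses regularity exactly to kill the higher Tor terms.
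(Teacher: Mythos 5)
The paper itself gives no proof of this proposition: it is stated as a recollection of standard facts (due to Quillen and Illusie) and used as a black box, so there is nothing to compare your argument against except the literature. Your plan is the standard and correct one. Two small points deserve care. First, note that the displayed transitivity sequence in the statement contains a typo --- the third term should be $\Cot_{B/A}$, not $\Cot_{A/R}$ --- and your argument correctly produces $\Cot_{B/A}$: after choosing $P^\bullet \to A$ cofibrant over $R$ and $Q^\bullet \to B$ cofibrant over $P^\bullet$, the degreewise conormal sequences are short exact (left-exactness because $Q^n$ is polynomial, hence formally smooth, over $P^n$) and degreewise split as $Q^n$-modules, so they stay exact after $\otimes_{Q^\bullet} B$; you should also say explicitly that $\Omega_{Q^\bullet/P^\bullet}\otimes_{Q^\bullet}B$ computes $\Cot_{B/A}$, which requires the invariance of the cotangent complex under replacing the base $A$ by the weakly equivalent cofibrant $P^\bullet$. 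Second, in the regular-sequence step the ``Koszul complex'' should really be the free simplicial $A$-algebra obtained by attaching $n$ one-cells killing $f_1,\dots,f_n$ (the derived quotient); regularity is used to show this free object has no higher homotopy and hence resolves $B$, after which $\Cot_{B/A}$ is visibly free on the cells in degree $1$ and isomorphic to $I/I^2[1]$. Equivalently, one can induct on the length of the sequence using transitivity and Tor-independent base change from the universal case $\bb{Z}[x]\to\bb{Z}[x]/(x)$; either route is fine.
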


The importance of the cotangent complex is that it controls deformations.  This
is an old result conjectured initially by Grothendieck and proved by Illusie on
his PhD thesis.  We recall here the result in the context of schemes, in the
flat context, which is the one we really care about.

\begin{theorem}
    [Illusie]
    \label{thm::def_schemes}
    Let 
    $f_0 \colon X_0 \to S_0$ 
    be a flat morphism of schemes and 
    $j \colon S_0 \inj S$ 
    be a closed immersion given by a square-zero ideal
    $\sh{I} \subset \sh{O}_{S} \surj \sh{O}_{S_0}$.
    There is a obstruction class
    \[
        o(f_0) \in \Ext^2_{\sh{O}_{X_0}}(\Cot_{X_0/S_0}, f_0^*\sh{I})
    \]
    that vanishes \emph{precisely} when $X_0$
    admits a flat deformation $X_0 \inj X \to S$.
    The isomorphism class of such solutions are a torsor under
    $\Ext^1(\Cot_{X_0/S_0}, f_0^*\sh{I})$,
    and the automorphism group of any such solution is
    $\Ext^0(\Cot_{X_0/S_0}, f_0^*\sh{I})$.
\end{theorem}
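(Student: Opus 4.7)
The plan is to first establish the theorem affine-locally, where the cotangent complex is the classifying object for square-zero extensions, and then globalize via a \v{C}ech-style gluing argument. Write $X_0 = \Spec A_0$, $S_0 = \Spec R_0$, $S = \Spec R$ in the affine situation, with $I = \ker(R \to R_0)$ satisfying $I^2 = 0$. The set of isomorphism classes of flat deformations of $A_0$ to $R$ is the set $\pi_0 \Exan(A_0 / R, f_0^* I)$ of square-zero extensions of $A_0$ by $J := A_0 \otimes^\LL_{R_0} I \simeq A_0 \otimes_{R_0} I$ (flatness of $f_0$ is used here to see that the tensor product is underived, and also to see that any square-zero extension by this module will automatically be $R$-flat).

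The fundamental input from Quillen--Illusie is the equivalence of $\infty$-groupoids
\[
    \Exan(A_0 / R_0,\, J) \;\simeq\; \tau_{\leqq 0}\, \RR\Hom_{A_0}(\Cot_{A_0/R_0}, J)[1],
\]
obtained by choosing a simplicial polynomial resolution $P_\bullet \to A_0$ and identifying a square-zero extension with a compatible lift of the maps $P_\bullet \to A_0$ modulo homotopy. Taking homotopy groups, this gives existence controlled by $\Ext^2_{A_0}(\Cot_{A_0/R_0}, J)$ (the obstruction class being the image of $P_\bullet \to A_0$ under the boundary coming from the chosen lift of $R_0$ to $R$), a torsor structure on the set of solutions under $\Ext^1_{A_0}(\Cot_{A_0/R_0}, J)$, and an automorphism group $\Ext^0_{A_0}(\Cot_{A_0/R_0}, J) = \Der_{R_0}(A_0, J)$. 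This handles the affine case.

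For the global statement, cover $X_0$ by affine opens $U_i = \Spec A_{0,i}$ and apply the local result to obtain a flat deformation $U_i^{(1)}$ on each $U_i$, whose existence is controlled by a class $o_i \in \Ext^2(\Cot_{U_i/S_0}, f_0^*\sh{I})$. These local classes patch together into a global class $o(f_0) \in \Ext^2_{\sh{O}_{X_0}}(\Cot_{X_0/S_0}, f_0^*\sh{I})$ via the local-to-global Ext spectral sequence. When $o(f_0) = 0$, the local deformations exist but need not be compatible; their discrepancies on pairwise intersections are classes in $\Ext^1$, and triple-overlap cocycle conditions live in $\Ext^2$. The compatibility is exactly governed by the local-to-global spectral sequence $\HH^p(X_0, \mathcal{E}xt^q(\Cot_{X_0/S_0}, f_0^*\sh{I})) \Rightarrow \Ext^{p+q}$, and the net result is that the global torsor and automorphism groups come out to $\Ext^1$ and $\Ext^0$ respectively.

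The most delicate point is the affine identification of $\Exan$ with $\tau_{\leqq 0}\RR\Hom(\Cot, J)[1]$, which requires some care with simplicial methods and the non-canonical choice of resolution; equivalently, one packages it as the statement that $\Cot_{A_0/R_0}$ corepresents the derived functor of derivations. Once that input is available, the globalization is essentially formal \v{C}ech bookkeeping, and the flatness hypothesis intervenes only to identify $A_0 \otimes^\LL_{R_0} I$ with $A_0 \otimes_{R_0} I$ and to ensure that a square-zero extension by a flat module over a flat base is itself flat.
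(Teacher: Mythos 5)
The paper offers no proof of this statement: it is recalled from Illusie's thesis purely as background for the analytic deformation theory of Section 3.2, so there is nothing in the text to compare your argument against. Your sketch is the standard Quillen--Illusie argument and its architecture is sound, but two points need tightening. First, in the affine step the displayed equivalence $\Exan(A_0/R_0, J)\simeq \tau_{\leqq 0}\,\RR\Hom_{A_0}(\Cot_{A_0/R_0},J)[1]$ classifies square-zero extensions of $A_0$ by $J$ \emph{as $R_0$-algebras}, whereas a flat deformation over $R$ is a square-zero extension of $A_0$ by $J$ \emph{as an $R$-algebra} whose restriction along $I\subset R$ is the canonical map $I\to J=A_0\otimes_{R_0}I$. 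The set of deformations is thus the fiber of $\Ext^1_{A_0}(\Cot_{A_0/R},J)\to\Hom_{R_0}(I,J)$ over the tautological element, and it is the long exact sequence of the transitivity triangle for $R\to R_0\to A_0$ that simultaneously produces the obstruction in $\Ext^2_{A_0}(\Cot_{A_0/R_0},J)$ and exhibits the nonempty fiber as a torsor under $\Ext^1_{A_0}(\Cot_{A_0/R_0},J)$; your remark about ``the boundary coming from the chosen lift'' gestures at this, but as written your displayed object computes the torsor group, not the deformation groupoid. Second, the globalization is not merely formal \v{C}ech bookkeeping: assembling the local obstructions, the $\Ext^1$-discrepancies on double overlaps, and the degree-two cocycle obstruction in the sheaf of derivations into a single class in $\Ext^2_{\sh{O}_{X_0}}(\Cot_{X_0/S_0},f_0^*\sh{I})$ that is well defined independently of all choices is precisely why Illusie works with gerbes and the global cotangent complex in the ringed topos rather than patching classes through the local-to-global spectral sequence. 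Neither point breaks your strategy, but both are where the actual content of the theorem is concentrated.
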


We now generalize the above discussion to the case relevant to
us, that is, to the setting of adic spaces.
Here we follow closely the exposition of 
\cite[Sec.~7.1]{haoyang2019ht}
(but see also \cite{AlmostRingTheory}).

A first naive guess would be to define a (derived) $p$-complete
version of the cotangent complex.
Given $R_0$ a $p$-complete $\bb{Z}_p$-algebra, and 
$A \to B$ a map of $R_0$-algebras, we define
\[
    \what{\Cot}_{B/A} = 
    \lim_n (\Cot_{A/B} \otimes^\LL_R \cof(R \xrightarrow{p^n} R))),
\]
where the limit and cofiber are taken 
in the $\infty$-categorical sense.
This is an animated $B$-module, meaning that it lies on
$\icat{D}_{\geqq 0}(B)$,
and by analysing the $K$-flat resolution one obtains 
$\HH^0(\what{L}_{B/A}) = \what\Omega^1_{B/A}$.

\begin{definition}
    [The analytic cotangent complex]
    \label{def::analytic_cotangent}
    Let $A,B$ be a pair of $p$-adic affinoid Huber pairs.
    The \emph{analytic cotangent complex} is defined to be
    the filtered colimit
    \[
        \Cot_{B/A}^+ = 
        \colim_{A_0 \to B_0} \what{\Cot}_{B_0/A_0}, \quad
        \Cotan_{B/A} = 
        \Cot_{B/A}^+
        \left[ \frac{1}{p} \right]
    \]
    taken inside the $\infty$-category $\icat{D}(B)^{\wedge}$.
    Here, the colimit is indexed by the filtered category of 
    rings of definition $A_0 \subset A^+$ and $B_0 \subset B^+$.

    This construction can be sheafified\footnote{
        Here we are seeing $\Cotan_{B/A}$ as a presheaf of objects
        in $\icat{D}_{\geqq 0}$,
        and not in the unbounded derived category.
    }
    to obtain an analytic  positive
    cotangent complex $\Cot^+_{Y/X} \in \icat{D}_{\geqq 0}(\sh{O}_{X}^+)$ 
    for analytic adic spaces $Y/X$ living over $\Spa \bb{Q}_p$,
    and finally inverting $p$ we get
    $\Cotan_{Y/X} = \Cot^+_{Y/X}[1/p] \in \icat{D}_{\geqq 0}(\sh{O}_{X})$.
    We also have
    $\HH^0(\Cotan_{Y/X}) = \Omega^\an_{Y/X}$.

    There is a natural map
    $\Cot_{Y/X} \to \Cotan_{Y/X}$
    from the topos-theoretic cotangent complex
    to the analytic one,
    which boils down to the counit $M \to \what{M}$
    of the (derived) $p$-completion adjunction
    (and then inverting $p$).
\end{definition}

\begin{remark}
    To compute this colimit, remember that the cotangent
    $L_{B/A}$ exists as an object of $\Ch_{\geqq 0}(B)$, 
    and its terms are flat.
    Therefore, this can also be computed as a $1$-categorical
    colimit in $\Ch_{\geqq 0}(B)$, with the transition maps
    being obtained by the functorial resolutions.

    In particular, we deduce a functoriality with respect to 
    maps of Huber rings, which allows us to extend
    the definition to adic spaces as claimed.
    The statement for $\HH^0$ also follows by
    analysing $K$-flat resolutions.
\end{remark}

\begin{remark}
    [{\cite[Rmrk.~7.1.1,7.1.2]{haoyang2019ht}}]
    This definition can be simplified for
    the cases we're interested in.
    If $A$ is an affinoid, bounded and tft 
    over some $p$-adic field, then
    \[
        \colim_{A^+ \to B_0} \what{\Cot}_{B_0/A^+} 
        \left[ \frac{1}{p} \right]
        \reqv \Cotan_{A/B}
    \]
    where now only $B_0$ varies;
    if $B$ is furthermore bounded, then even
    $\what{\Cot}_{B^+/A^+}[1/p] \reqv \Cotan_{A/B}$.

    Similarly, one can also restrict only to 
    tft rings of definition $A_0 \to B_0$ on the colimit,
    when $A,B$ are tft over a $p$-adic field $K$.
    This is due to the fact that every every ring of definition
    is contained in a larger tft ring of definition
    in this case.
    This recovers the definition in \cite{AlmostRingTheory}.
\end{remark}

\begin{proposition}
    Let $X,Y$ be analytic adic spaces over $S$,
    with $S$ itself living over $\Spa \bb{Q}_p$,
    and consider 
    an $S$-morphism $X \to Y$.
    Then there is an
    \emph{analytic fiber sequence}
    \[
    \begin{tikzcd}
        \LL f^* \Cotan_{Y/S} \ar[r, "\delta_f"] & 
        \Cotan_{X/S} \ar[r] & 
        \Cotan_{X/Y}
    \end{tikzcd}
    \]
    in ${\icat{D}}(X)$, the derived category of 
    $\sh{O}_{X}$-modules.
\end{proposition}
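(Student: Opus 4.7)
The plan is to reduce to the affine case by functoriality and sheafification, then bootstrap from the classical transitivity fiber sequence for algebraic cotangent complexes. Write $A \to B \to D$ for the Huber pair level maps corresponding (locally) to $S \to Y \to X$. For ordinary rings, there is the transitivity cofiber sequence
\[
\Cot_{B/A} \otimes_B^\LL D \to \Cot_{D/A} \to \Cot_{D/B}
\]
in $\icat{D}(D)$, and both $(\var) \otimes_B^\LL D$ and $\Cot_{-/-}$ commute with filtered colimits of rings. I would first upgrade this to the derived $p$-complete setting by noting that derived $p$-completion is a localization on $\icat{D}(D)^\wedge$ and hence sends cofiber sequences to cofiber sequences; concretely, applying $\lim_n \var \otimes^\LL \cof(p^n)$ to the above yields a cofiber sequence of $\what{\Cot}$'s.

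Next, I would take the filtered colimit over compatible rings of definition. Here one should start with the cofinal category of triples $A_0 \subset A^+$, $B_0 \subset B^+$, $D_0 \subset D^+$ of rings of definition with $A_0 \to B_0 \to D_0$, and use the simplification from Haoyang's remark to restrict attention to tft rings of definition when $A, B, D$ are tft over a $p$-adic field; this ensures the indexing category is filtered and cofinal in each of the three colimits defining $\Cot^+_{B/A}$, $\Cot^+_{D/A}$, $\Cot^+_{D/B}$. Filtered colimits preserve cofiber sequences in stable $\infty$-categories, so we obtain the transitivity sequence at the level of $\Cot^+$. Inverting $p$ is likewise an exact localization, so we get it for $\Cotan$.

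The last step is to sheafify and identify the first term with $\LL f^* \Cotan_{Y/S}$. Sheafification is exact on presheaves of stable $\infty$-categories, so the cofiber sequence survives. The identification of $\Cot_{B/A} \otimes^\LL_B D$ with (the sheafification of) $\LL f^* \Cot_{Y/S}^+$ is by definition of pullback of $\sh{O}^+$-modules, and commutes with $p$-completion and inverting $p$ by the discussion above. Rotating the cofiber sequence yields the claimed fiber sequence in $\icat{D}(X)$.

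The main obstacle I expect is the bookkeeping around rings of definition: we must check that the indexing categories for the three colimits can be assembled into a single filtered diagram of triples $(A_0, B_0, D_0)$ that is cofinal in each factor, so that the three separately defined colimits $\Cot^+_{B/A}$, $\Cot^+_{D/A}$, $\Cot^+_{D/B}$ can be computed by a common diagram and the transitivity maps are compatible. The reduction to tft rings of definition (over a $p$-adic field), together with the fact that any ring of definition is contained in a tft one, is what makes this bookkeeping manageable; once that is in place, everything else is formal from exactness of filtered colimits, derived $p$-completion, and inverting $p$.
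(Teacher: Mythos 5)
Your proposal is correct and follows essentially the same route as the paper, whose entire proof is the observation that every operation in the definition of $\Cotan$ (derived $p$-completion, filtered colimit over rings of definition, inverting $p$, sheafification) preserves fiber sequences, deferring the details to \cite[Prop.~7.2.13]{AlmostRingTheory}. Your writeup simply makes explicit the bookkeeping (assembling a common filtered diagram of triples of rings of definition) that the paper and its reference leave implicit.
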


\begin{proof}
    All operations in the definition preserve fiber sequences.
    For more details, the proof of
    \cite[Prop.~7.2.13]{AlmostRingTheory}
    applies \textit{mutatis mutandis}.
\end{proof}

From now on, we assume for simplicity that we are
working over $\Bdr^+/\xi^2$
(defined in next appendix).
This allows us to get a proper handle on the subrings
$A_0$ and $B_0$ in the definition of the analytic cotangent complex.

\begin{proposition}
    \label{prop::closed_immersions}
    Let $X \to Y$ be a finite map of adic spaces 
    which are tft over $\Bdr^+/\xi^2$.
    Then the natural map
    \[
        \Cot_{Y/X} \reqv \Cotan_{Y/X}
    \]
    Is an equivalence.
    In particular, it follows from the properties 
    of the usual cotangent complex that
    \[
        \tau_{\leqq 1} \Cotan_{X/S}
        \reqv 
        \sh{I}/\sh{I}^2[1]
    \]
    is an isomorphism, and if
    the immersion is regular (meaning $\sh{I}$ is generated by 
    a regular sequence) then the result follows without truncation.
\end{proposition}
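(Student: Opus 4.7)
The plan is to reduce to an affinoid local statement and exploit finiteness of the map to kill the $p$-completion ambiguity that normally distinguishes $\Cotan$ from $\Cot$. The statement is local on both source and target, so I reduce to the case of a finite morphism $A \to B$ of Huber pairs both tft over $\Bdr^+/\xi^2$. Using the remark from \cite[Rmrk.~7.1.1]{haoyang2019ht} recalled in the definition of the analytic cotangent complex, I may restrict the defining colimit to tft rings of definition. By a choice of compatible tft rings of definition $A_0 \subset A^+$ and $B_0 \subset B^+$, which exists because the map is of topologically finite type and $B$ is finite over $A$, I can moreover arrange that $B_0$ is a finitely generated (even finitely presented) $A_0$-module. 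This turns the analytic cotangent complex into $\what{\Cot}_{B_0/A_0}[1/p]$ in a single stroke.

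The main content of the proof is then to verify that $\what{\Cot}_{B_0/A_0} \reqv \Cot_{B_0/A_0}$, i.e.\ that the derived $p$-completion is trivial. The key point is that $B_0$ is finitely presented as an $A_0$-module, so every term of a free simplicial resolution over $A_0$ has Kähler differentials that are finitely generated $B_0$-modules. Consequently the cohomology modules $\HH^i(\Cot_{B_0/A_0})$ are finitely generated over the $p$-adically complete Noetherian ring $B_0$, hence already $p$-adically complete; together with the vanishing of the relevant derived limits this makes derived $p$-completion act as the identity. Inverting $p$ yields $\Cotan_{B/A} \reqv \Cot_{B_0/A_0}[1/p] \reqv \Cot_{B/A}$, identifying the natural comparison map as an equivalence.

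For the addendum, once $\Cotan_{X/Y} \reqv \Cot_{X/Y}$ is established for finite maps (in particular for closed immersions, for which the analogous statement about the direction of the morphism is the one actually used), I invoke the standard algebraic computation of the cotangent complex of a closed immersion: when $Y$ is smooth over the base, the truncation $\tau_{\leqq 1}\Cot_{X/Y}$ is canonically $\sh{I}/\sh{I}^2[1]$, computed directly from the transitivity triangle for $X \hookrightarrow Y \to S$ and the identification of $\Omega^1_{Y/S}$ with a locally free module. If $\sh{I}$ is generated by a regular sequence, the higher Koszul differentials vanish and no truncation is needed.

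The main obstacle I foresee is step (iii): carefully verifying that derived $p$-completion commutes trivially with the cotangent complex for a finitely presented module map, and that the colimit in the definition of $\Cotan$ indeed stabilizes on tft rings of definition in our specific setting over $\Bdr^+/\xi^2$. The subtlety is that $\Bdr^+/\xi^2$ itself is not Noetherian in a naive sense, but the choice of tft rings of definition over $\Ainf/\xi^2$ restores the finiteness properties needed to make the homological arguments about finitely generated modules apply cleanly.
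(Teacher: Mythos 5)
Your overall route (reduce to affinoids, pick compatible tft rings of definition with $B_0$ finite over $A_0$, and argue that derived $p$-completion does nothing) is a reasonable attempt at a self-contained argument, whereas the paper simply reduces to the affinoid case and cites \cite[Prop.~5.2.15]{haoyang2021crystalline}. However, your key step has a genuine gap, and the closing paragraph has the Noetherianity situation exactly backwards. The ring $\Bdr^+/\xi^2$ itself \emph{is} Noetherian --- it is Artinian local with residue field $C$ --- but the tft rings of definition $A_0, B_0$ are algebras over $\Ainf/\xi^2$, whose quotient by $\xi$ involves the non-Noetherian valuation ring $\sh{O}_C$; so $B_0$ is \emph{not} Noetherian, and passing to rings of definition destroys, rather than restores, the finiteness properties you need. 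As a result, neither of the two assertions carrying the proof survives: the homology modules $\mathcal{H}_i(\Cot_{B_0/A_0})$ of the cotangent complex of a finitely presented map need not be finitely generated in each degree over a non-Noetherian base (already $\mathcal{H}_1$ is a kernel, and higher terms are worse), and ``finitely generated over a $p$-adically complete ring'' does not imply ($p$-adically or derived $p$-adically) complete without Noetherian hypotheses. What is true without Noetherianity is that the derived $p$-complete modules are closed under cokernels, so \emph{finitely presented} modules over a derived $p$-complete ring are derived $p$-complete; but you have not shown the relevant homology modules are finitely presented, and this is precisely the delicate point.

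A second, smaller issue: the simplification of the defining colimit to a single $\what{\Cot}_{B_0/A_0}[1/p]$ is quoted from the remark in the appendix, but that remark is stated for algebras tft over a $p$-adic field, not over $\Bdr^+/\xi^2$; you would need to justify the analogous stabilization in the present setting. The addendum (the identification $\tau_{\leqq 1}\Cotan \simeq \sh{I}/\sh{I}^2[1]$ via the transitivity triangle, and the removal of the truncation for regular immersions) is standard and unobjectionable once the comparison of cotangent complexes is in hand. To repair the main step you should either follow the paper and invoke the cited proposition of Guo, or rework the completion argument so that it only uses derived $p$-completeness of finitely \emph{presented} modules together with an explicit dévissage (e.g.\ via a surjection from a Tate algebra and the transitivity triangle) that exhibits the relevant homology as such.
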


\begin{proof}
    Follows by reducing to the affinoid case and applying
    \cite[Prop.~5.2.15]{haoyang2021crystalline}.
\end{proof}

\begin{proposition}
    Let $f \colon Y \to X$ be a smooth map of
    adic spaces which are tft over $\Bdr^+/\xi^2$.
    Then the canonical map
    \[
        \Cotan_{Y/X} \reqv \Omega^\an_{Y/X}
    \]
    is an equivalence.
    In particular if $Y/X$ is tft, then it is \'etale
    if and only if $\Cotan_{Y/X} = 0$.
\end{proposition}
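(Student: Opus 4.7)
The plan is to work \'etale-locally on $Y$ and on $X$ and, via the analytic transitivity fiber sequence proven just above, reduce the claim for smooth morphisms to two special cases: relative polydisks and \'etale tft morphisms. Since both $\Cotan_{Y/X}$ and $\Omega^\an_{Y/X}$ are compatible with \'etale localization, we may assume $X = \Spa(A,A^+)$ and $Y = \Spa(B,B^+)$ are affinoid. By the local structure of smooth maps of tft adic spaces (Huber), after a further \'etale localization on $Y$ we can factor $f$ as
\[
    Y \xrightarrow{g} X\langle T_1,\dots,T_n\rangle \xrightarrow{\pi} X,
\]
with $g$ \'etale and $n$ the relative dimension. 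Applying the transitivity sequence $\LL g^* \Cotan_{\pi}\to \Cotan_{f} \to \Cotan_{g}$ reduces matters to two claims: $(\ast)$ the result holds for $\pi$, and $(\star)$ the vanishing $\Cotan_{g}=0$ for $g$ \'etale and tft.

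For $(\ast)$, I would choose a tft ring of definition $A_0 \subset A^+$ over a ring of definition of $\Bdr^+/\xi^2$, as allowed by the remark following Def.~\ref{def::analytic_cotangent}, and take $B_0 = A_0\langle T_1,\dots,T_n\rangle$ as a ring of definition for $B^+$. The algebraic cotangent complex $\Cot_{A_0[T_i]/A_0}$ is already concentrated in degree $0$ and free on $\dd T_1,\dots,\dd T_n$, and its derived $p$-completion is the free $B_0$-module of rank $n$. Passing to the filtered colimit indexing the definition of $\Cot^+_{B/A}$ and inverting $p$ identifies $\Cotan_{\pi}$ with $\Omega^\an_{B/A} = B\langle \dd T_i\rangle$, concentrated in degree~$0$.

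For $(\star)$, the key input is the topological invariance of the \'etale site used already in Lemma \ref{lemma::etale_lift}: an \'etale tft morphism $g$ lifts uniquely and \'etalely to each infinitesimal thickening, and in particular, for suitable tft rings of definition, the pair $(A_0,B_0)$ can be arranged so that $\Spf B_0 \to \Spf A_0$ is formally \'etale. The algebraic cotangent complex of a formally \'etale ring map vanishes after $p$-completion, so $\what{\Cot}_{B_0/A_0} = 0$; passing to the colimit and inverting $p$ gives $\Cotan_{g}=0$. This is the \'etale analogue of the closed-immersion computation \cite[Prop.~5.2.15]{haoyang2021crystalline} invoked in the previous proposition, and I would simply cite it if available.

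The main obstacle is case $(\star)$: while the polydisk computation is essentially formal, the \'etale case requires carefully passing between the analytic cotangent complex and its algebraic precursor, since the colimit over rings of definition is delicate. A clean route realises $Y/X$ as the generic fiber of a compatible family of \'etale lifts provided by Lemma \ref{lemma::etale_lift}, on which the algebraic cotangent complex vanishes at every finite level, and transfers this vanishing through the $p$-completion and colimit in the definition of $\Cotan$. Once $(\ast)$ and $(\star)$ are established, concentration in degree $0$ together with the identification $\sh{H}^0(\Cotan_{Y/X}) = \Omega^\an_{Y/X}$ (noted in Def.~\ref{def::analytic_cotangent}) yields the main claim, and the ``in particular'' statement follows since $\Omega^\an_{Y/X}=0$ for a tft morphism characterises \'etaleness by Huber's theory.
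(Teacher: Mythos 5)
The paper itself disposes of this proposition with a one-line citation to \cite[Cor.~5.2.14]{haoyang2021crystalline}, so any self-contained argument is already a departure. Your reduction---\'etale-localize, factor a smooth morphism as an \'etale map over a relative polydisk, and apply the transitivity fiber sequence---is exactly the standard strategy behind that reference, and your polydisk case $(\ast)$ is essentially correct; the one step to make explicit there is that $\what{\Cot}_{A_0\langle T_i\rangle/A_0}$ agrees with the derived $p$-completion of $\Cot_{A_0[T_i]/A_0}\otimes A_0\langle T_i\rangle$, i.e.\ that the $p$-completed cotangent complex does not see the completion $A_0[T_i]\to A_0\langle T_i\rangle$. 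That is standard, but it is precisely the point where the ``analytic'' construction earns its keep and should not be passed over silently.

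The genuine gap is in $(\star)$. You assert that for an \'etale tft morphism one can arrange rings of definition $A_0\to B_0$ with $\Spf B_0\to \Spf A_0$ formally \'etale, invoking topological invariance of the \'etale site as in Lemma~\ref{lemma::etale_lift}. That lemma concerns the square-zero thickening $\Spa C\inj \Spa \Bdr^+/\xi^2$ and produces no integral (formal) models at all, so it cannot supply the ``compatible family of \'etale lifts'' you appeal to; and the assertion itself fails for the most basic \'etale morphisms of rigid spaces, namely open immersions of rational subsets. For $U = X(f/g)\subset X=\Spa(A,A^+)$ one has $\sh{O}(U)=A\langle T\rangle/(gT-f)$, and a ring of definition $B_0$ of $\sh{O}(U)$ is not formally \'etale over $A_0$: the relation only kills $g\,\dd T$, and $g$ need not be a unit integrally, so $\Omega_{B_0/A_0}\neq 0$. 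The correct route is Huber's local structure of \'etale morphisms: locally they factor as an open immersion followed by a finite \'etale map. Finite \'etale maps are handled algebraically ($\Cot_{B/A}=0$ already, and the comparison of Proposition~\ref{prop::closed_immersions} transports this to $\Cotan$), while for a rational subset the presentation above exhibits $\sh{O}(U)$ as a regular quotient of $A\langle T\rangle$, and the resulting two-term complex $\bigl[(gT-f)/(gT-f)^2\to \Omega^\an_{A\langle T\rangle/A}\otimes \sh{O}(U)\bigr]$ is acyclic because $\dd(gT-f)=g\,\dd T$ with $g$ a unit on $U$. Finally, your justification of the ``in particular'' is also off: $\Omega^\an_{Y/X}=0$ characterises \emph{unramifiedness}, not \'etaleness, in Huber's theory; the converse direction needs the full vanishing of $\Cotan_{Y/X}$ (a locally closed immersion that is not open contributes $\sh{I}/\sh{I}^2[1]\neq 0$ in degree one), not merely of its $\sh{H}^0$.
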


\begin{proof}
    This is 
    \cite[Cor.~5.2.14]{haoyang2021crystalline}.
\end{proof}

The following definition is not strictly necessary,
but it illuminates the definition of the lift in section 3.2.
We can define a deformation problem for adic spaces
analogously to locally ringed spaces
\cite[7.3.13]{AlmostRingTheory}.
An 
\emph{analytic deformation} of a morphism of adic spaces
$f \colon X \to S$
by a coherent $\sh{O}_{X}$-module $\sh{F}$
consists of a closed embedding of adic spaces
$j\colon X \inj Y$
together with the datum of an $\sh{O}_{X}$-linear isomorphism
$j^*\sh{I} \reqv \sh{F}$,
with $\sh{I}$ the ideal defining the embedding.

In 
\autocite[(1.4.1)]{huber2013etale}
we see that in fact that any coherent ideal 
$\sh{I} \subset \sh{O}_{Y}$ which squares to zero determines
an analytic extension $\sh{O}_{X} \inj \sh{O}_{Y}$,
as $\sh{O}_{Y}/\sh{I}$ has a canonical topology,
and the stalks have canonical valuations,
allowing us to define an adic space.

We denote by $\Exan_S(X,\sh{F})$
the category of such deformations.
A morphism of extensions is a map of adic spaces
$Y \to Y'$ over $X$
which makes the diagram
\[
\begin{tikzcd}
    0 \ar[r] &  \sh{F} \ar[r]\ar[d,equal] 
             &  j^*\sh{O}_{Y'} \ar[r] \ar[d, "\sim"]
             &  \sh{O}_{X} \ar[r] \ar[d, equal] &  0 \\
    0 \ar[r] &  \sh{F} \ar[r] 
             &  j^*\sh{O}_{Y} \ar[r] 
             &  \sh{O}_{X} \ar[r] &  0 
\end{tikzcd}
\]
commute.
We note that $\Exan_S(X,\sh{F})$ is a groupoid, 
and even a Picard groupoid via the usual arguments with Baer sums.

\section{Sites associated to rigid-spaces}
We recall that a perfectoid space is a certain type of adic space which is
glued locally from perfectoid Tate pairs.  We denote by $\Perfd$ (resp.
$\Perf$) the full subcategory of adic spaces spanned by perfectoid spaces
(resp. perfetoid spaces of characteristic $p$).  Perfectoid spaces admit a good
theory of \'etale morphisms and also pro-\'etale morphisms as recalled below.

\begin{definition}
    Let $X = \Spa(R,R^+)$ and $Y = \Spa(S,S^+)$ be affinoid perfectoids.  A
    morphism $f \colon Y \to X$ is said to be \emph{affinoid pro-\'etale} if
    $Y$ can be written as the limit $Y = \lim Y_i \to X$ where $Y_i = \Spa(S_i,
    S_i^+) \to X$ are \'etale affinoids.  That is, for a pseudo-uniformizer
    $\varpi$ of $X$ we have that $S$ is isomorphic to the $\varpi$-adic completion
    \begin{equation*}
        S^+ = (\colim S^+_i)^\wedge_\varpi, \qquad 
        S = S^+\left[\frac{1}{\varpi}\right]
    \end{equation*}
    of the colimit with $\Spa(S_i,S_i^+) \to X$ \'etale.

    A morphism of perfectoid spaces is said to be \emph{pro-\'etale} if it is
    locally on source and target affinoid pro\'etale.
\end{definition}

\begin{remark}
    In general, one might try and define pro-\'etale morphisms for general adic
    spaces using the pro-category of $X_\et$, as in \autocite{scholze2013adic},
    which works well for perfectoids and locally Noetherian adic spaces, and
    leads to what is called the \emph{flattened pro-\'etale site} of a rigid
    space.  However, some care with covers might be needed to define a
    pro-\'etale topology \cite{scholzeerratum}, and so we've opted to follows
    the more modern approach of \autocite{scholze2017diamonds} using the
    related notion of a quasi-pro-\'etale morphisms (see below).
\end{remark}

Pro-\'etale morphism have the expected permanence properties.  They are closed
under composition, base-change, and any morphism between pro-\'etale morphisms
is pro-\'etale.  However, they do not satisfy pro-\'etale descent
\autocite[Example~9.1.15]{scholze2020berkeley}.

\begin{definition}
    \label{def::v_covers}
    Let $\Perf$ be the category of characteristic $p$ perfectoid
    spaces.

    A collection of morphisms $\{Y_i \to X\}$ of perfectoid spaces is a
    \emph{$\vv$-cover} if for all quasi-compact opens $U$ of $X$
    there is a finite subset $Y_{i_1}, \dots, Y_{i_n}$ of the $Y_i$
    and quasi-compact opens $V_i \subset Y_i$ such that
    \[
        \bigcup_{d=0}^n f(V_{i_d}) = U.
    \]
    The $\vv$-topology is the Grothendieck topology on $\Perf$
    which is generated by $\vv$-covers.

    A $\vv$-cover $\{ Y_i \to X \}$ is said to be a
    \emph{pro-\'etale cover} if all maps
    $Y_i \to X$ are pro-\'etale.
    The pro-\'etale topology on $\Perf$ is the Grothenidieck 
    topology generated by such covers.
\end{definition}

Rigid-analytic spaces can be reincarnated as certain pro-\'etale sheaves as we
will see below. But first, we describe a local version of pro-\'etale maps
which can be extended to sheaves on $\Perf_\proet$.

\begin{definition}
    [Quasi-pro-\'etale maps] A perfectoid space $X$ is said to be
    \emph{strictly totally disconnected}, if it is qcqs and every \'etale cover
    of $X$ splits.  Equivalently \cite[Prop.~7.16]{scholze2017diamonds}, every
    connected component of $X$ is of the form $\Spa(C,C^+)$ for $C$ an
    algebraically closed perfectoid field.

    A morphism of pro-\'etale stacks $f \colon Y \to X$ is said to be
    \emph{quasi-pro-\'etale} if it is locally separated (meaning separated
    locally in the domain), and for all strictly totally disconnected $X'$ and
    maps $X' \to X$ the pullback $Y_{X'} \to X'$ is pro-\'etale.
\end{definition}

By taking a careful limit over enough affinoid open covers of some space,
we see that every perfectoid space is pro-\'etale locally strictly 
totally disconnected
\cite[Lemma~7.18]{scholze2017diamonds}.
In particular quasi-pro-\'etale morphisms are 
pro-\'etale locally pro-\'etale.
We also see that morphisms which are (quasi-)pro-\'etale
locally quasi-pro-\'etale are quasi-pro-\'etale.

\begin{definition}
    [{\cite[Def.~11.1]{scholze2017diamonds}}]
    A \emph{diamond} $X$ is a pro-\'etale sheaf on the site $\Perf$ of
    characteristic $p$ perfectoid spaces, which is the quotient of a perfectoid
    $Y$ by a pro-\'etale equivalence relation, that is, a relation $R \subset Y
    \times Y$ such that the projection maps $R \rightrightarrows Y$ are
    pro-\'etale.

    Equivalently, a pro-\'etale sheaf $X$ is a diamond if it admits a
    quasi-pro-\'etale surjection from a perfectoid (cf. Prop.~11.5).
\end{definition}

Therefore diamonds are analogous to algebraic spaces, however we do not ask for
the representability of the diagonal because this requirement is too strong in
this setting.  There is a good notion of analytic topology associated to a
diamond $X$, namely we define the topological space
\[
    |X| = |Y|/|R|,
\]
where $Y \to X$ is a quotient by a pro-\'etale relation $R = Y \times_X Y$.

\begin{definition}
    Let $X$ be a locally spatial diamond, then we define the 
    following sites in increasing order of fineness.
    \begin{itemize}
        \item The analytic site $X_\an$ which is the site
            associated to the topological space $|X|$.
        \item The (finite) \'etale site $X_\et$ ($X_\fet$), 
            whose objects are (finite) \'etale morphisms 
            $Y \to X$ and $\vv$-covers.
        \item The quasi-pro-\'etale site $X_\qproet$, whose objects
            are quasi-pro-\'etale morphisms $Y \to X$ and same covers.
        \item The $\vv$-site $X_\vv$, whose objects are morphisms 
            from a spatial diamond $Y \to X$ and same covers.
    \end{itemize}
\end{definition}

\begin{remark}
    Some care must be taken to circumvent set-theoretic issues for the
    $\qproet$ and $\vv$ sites.  For this purpose, we fix a cutoff cardinal
    $\kappa$ and take a colimit (\textit{loc.~cit.}~Sec.~4).  This procedure is
    shown to preserve cohomology, and therefore we will not need to mention
    $\kappa$ explicitly in any of the following results.
\end{remark}

\begin{remark}
    We mention the $\vv$-site mostly for completeness, as other references work
    with $\vv$-bundles.  However, in view of theorem
    \ref{thm::qproet_vv_bundles}, we can work with the quasi-pro-\'etale site
    instead.
\end{remark}

We note that, by design, the quasi-pro-\'etale topos of a diamond is locally
perfectoid.  Indeed, any diamond $X$ is covered by a perfectoid, and hence we
can pullback this cover to any $Y \in X_\qproet$.  In particular, since
perfectoids are locally weakly contractible \autocite[Def.~3.2.1]{proet}, the
quasi-pro-\'etale topos of $X$ is locally weakly contractible and hence
\emph{replete} by \autocite[Prop.~3.2.3]{proet}. This will be important for us
in the sequel.

There are natural canonical maps of ringed sites
\[
\begin{tikzcd}
    X_\vv \ar[r, "\lambda"] & X_\qproet \ar[r, "\nu"] & X_\et
\end{tikzcd}
\]
given by the inclusion functors.
These functors will turn out to be an essential part of the correspondence.
We note that the pullback via $\lambda$ and $\nu$ are fully faithful, and the
cohomology of \'etale sheaves agree on all three sites.
{\cite[Props.~14.7,~14.8]{scholze2017diamonds}}

\begin{definition}
    [{\cite[Def.~15.5]{scholze2017diamonds}}]
    Let $X$ be an analytic adic space over $\bb{Z}_p$.  We define the
    \emph{diamond associated to $X$} to be the pro-\'etale sheaf
    \[
        X^\diamond \colon \Perf \to \Set \qquad 
        S \mapsto \left\{(S^\sharp, \iota),~
        f \colon S^\sharp \to X \right\} / \isoto,
    \]
    where $S^\sharp$ is a perfectoid space, $\iota \colon (S^\sharp)^\flat
    \reqv S$ is an isomorphism, and $f$ is a morphism of adic spaces.  This
    data is considered up to isomorphism of such triples.
\end{definition}

\begin{theorem}
    [{\cite[Lemma.~15.6]{scholze2017diamonds}}]
    Let $X$ be an analytic adic space over $\Spa \bb{Z}_p$.  We have
    $|X^\diamond| = |X|$, and $X^\diamond$ is locally spatial (and therefore
    spatial if $X$ is qcqs).  Furthermore, $X^\diamond$ is a diamond, and hence
    it is $\qproet$ and $\vv$ locally perfectoid.

    The associated diamond functor preserves \'etaleness, and induces an
    equivalence of (finite) \'etale sites $X_\et \isoto X^\diamond_\et$,
    $X_\fet \isoto X_\fet^\diamond$.
\end{theorem}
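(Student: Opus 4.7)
The plan is to reduce to the case where $X = \Spa(R, R^+)$ is a qcqs analytic affinoid over $\Spa \bb{Z}_p$, and then construct an explicit perfectoid pro-\'etale cover whose quotient realises $X^\diamond$ as a diamond. Choose a perfectoid field $C$ over which $R$ lives (concretely, one can take an algebraic closure of $\Frac R$, base change $R$ to it and then take the completion, or equivalently adjoin enough $p$-power roots of a pseudo-uniformizer). The resulting $\wtilde X = \Spa(\wtilde R, \wtilde R^+)$ is a perfectoid space and the map $\wtilde X \to X$ is pro-\'etale (it is obtained as a cofiltered limit of finite \'etale maps, possibly composed with a ramified base change to a perfectoid field). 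The fibre product $R = \wtilde X \times_X \wtilde X$ taken in pro-\'etale sheaves on $\Perf$ still projects via two pro-\'etale maps to $\wtilde X$, so $X^\diamond$ is the quotient of a perfectoid by a pro-\'etale equivalence relation, hence a diamond.

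Next I would verify $|X^\diamond| = |X|$. By the description of points of an adic space as continuous valuations, any point of $X$ admits a specialisation in some $\Spa(C, C^+)$ for $C$ a non-archimedean field, which gives a morphism $(\Spa(C,C^+))^\flat \to X^\diamond$ after tilting. Conversely an $S$-point of $X^\diamond$ (with $S$ perfectoid) is a continuous map $S^\sharp \to X$ of adic spaces, so $|X^\diamond| \to |X|$ is surjective with fibres contracted; a short argument using the fact that any untilt of $S$ has the same underlying topological space as $S$ gives the bijection, and the identification of the two topologies follows from local spatiality.

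For local spatiality and the quasi-pro-\'etale description, take a covering of $X$ by affinoid opens; the construction above realises each piece as the quotient of a spatial perfectoid by a spatial pro-\'etale relation, which is spatial by the standard criteria in \cite[Prop.~11.20]{scholze2017diamonds}. For the equivalence of \'etale sites $X_\et \reqv X^\diamond_\et$, the main input is Huber's theorem that \'etale morphisms of analytic adic spaces can be tested after pro-\'etale base change to a perfectoid cover together with the tilting equivalence $Y_\et \reqv Y^\flat_\et$ for perfectoid $Y$; this lets one produce an inverse to $(\var)^\diamond$ by first going to $\wtilde X$, tilting, and descending. The finite \'etale case follows formally since finite \'etale is stable under tilting and a finite \'etale map of diamonds is in particular \'etale.

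The main obstacle here is the \'etale-site equivalence: it is not formal from the construction of $X^\diamond$, because \'etaleness is a condition about the structure sheaf of the adic space which a priori is lost upon passing to perfectoid pro-\'etale sheaves. The essential ingredient is the combination of Huber's local structure theorem for \'etale morphisms together with tilting, both of which are genuinely non-trivial. In contrast, the diamond structure itself and the identification $|X^\diamond| = |X|$ follow rather directly once the perfectoid cover is produced.
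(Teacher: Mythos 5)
The paper does not actually prove this theorem: it is quoted from \cite[Lemma~15.6]{scholze2017diamonds}, and the only commentary is the remark that follows it, which isolates the two key steps of Scholze's proof — (a) showing that $X^\diamond$ is a pro-\'etale sheaf at all, which reduces to the functor parametrizing untilts being a pro-\'etale sheaf, and (b) exhibiting any analytic adic space over $\Spa\bb{Z}_p$ as the quotient of a perfectoid space by a pro-\'etale equivalence relation. Measured against that, your sketch has a genuine gap at its core: the construction of the perfectoid cover. For a general analytic affinoid $(R,R^+)$ the ring $R$ need not be a domain, and even when it is, completing an algebraic closure of its fraction field does not produce a cover of $\Spa(R,R^+)$; more importantly, base-changing $R$ to a perfectoid field does not make it perfectoid (the Tate algebra $C\langle T\rangle$ over a perfectoid field $C$ is not perfectoid). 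The actual construction adjoins $p$-power roots of a pseudo-uniformizer $\varpi$ with $\varpi^p\mid p$ \emph{and} of a set of topological generators of $R^+$, and the resulting tower is emphatically not pro-\'etale over $X$ — your own parenthesis ``composed with a ramified base change'' concedes as much. Hence the assertion that $\wtilde X\to X$ is pro-\'etale is false, and with it the conclusion that the projections $\wtilde X\times_{X^\diamond}\wtilde X\rightrightarrows\wtilde X$ are automatically pro-\'etale. The real content of Scholze's argument is precisely that this equivalence relation is nevertheless a perfectoid space with pro-\'etale projections even though the cover itself is ramified; already for $X=\Spa\bb{Q}_p$ one does not have a pro-\'etale perfectoid cover, and instead exhibits $\Spd\bb{Q}_p$ as the quotient of the tilt of $\bb{Q}_p^{\mathrm{cycl}}$ by the profinite group $\bb{Z}_p^\times$ acting through a pro-\'etale relation.

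A second omission: you never address why $X^\diamond$ is a pro-\'etale sheaf, i.e.\ why untilts glue along pro-\'etale (indeed $\vv$-) covers — the step the paper explicitly flags as the first key input. The remainder of your outline (identification of points, local spatiality via the spatiality criteria, and the \'etale-site comparison via tilting plus Huber's structure theory for \'etale maps) correctly locates where the remaining difficulty sits, but it all rests on the perfectoid cover whose construction, as written, does not go through.
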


We note that when proving that when proving that $X^\diamond$ there are two 
keys steps: the first is to show that this is indeed a pro-étale sheaf, which reduces to 
showing that the functor
\[
    (\Spa \bb{Z}_p)^\diamond \colon S \mapsto  \left\{(S^\sharp, \iota) \mid
        \iota (S^\sharp)^\flat \reqv S \right\} / \isoto
\]
parametrizing untilts of $S$ is a pro-étale sheaf.  The next step is to show
that any analytic adic space over $\Spa \bb{Z}_p$ is the quotient of a
perfectoid by a pro-\'etale equivalence relation. This information is crucial to
the proof of our main theorem.

We also note that the procedure $X \mapsto X^\diamond$ does lose some
information, as this functor is not fully faithful.  This procedure preserves
information of a more ``topological'' nature (such as the \'etale site).

Now if $X$ is a rigid analytic variety, then we have a structure sheaf
$\sh{O}_{X}$ on $X_\et$, which we can pullback to a
quasi-pro-\'etale/$\vv$-sheaf.  We can then define the following completed
version of the structure sheaf on these sites.

\begin{definition}
    [The completed structure sheaf]
    Let $X$ be an analytic adic space over a non-archemidean field $K$.
    The \emph{completed structure sheaf},
    or the \emph{quasi-pro-\'etale structure sheaf},
    $\what{\sh{O}}_{X}$ is defined to be the sheaf 
    \[
        \what{\sh{O}}_X^+ =
        \lim_n \nu^{-1}\sh{O}_{X}^+/p^n; \quad
        \what{\sh{O}}_X = \what{\sh{O}}_X^+ \left[ \frac{1}{p} \right]
    \]
    where the limit is taken as quasi-pro-\'etale sheaves.
    A similar definition also works within $X_\vv$.
\end{definition}

The cohomology of $\what{\sh{O}}_X$ captures interesting phenomena
of the quasi-pro-\'etale topology of rigid-analytic varieties.
First, there is a acyclicity phenomena for affinoid perfectoid.

\begin{theorem}
    [{\cite[Prop.~8.5\ (iii)]{scholze2017diamonds}}]
    \label{thm::perfectoid_acyclicity}
    Let $X$ be a rigid-analytic space over $K$ and $Y = \Spa(R, R^+) \in
    X_\qproet$ an affinoid perfectoid.  Then $\sh{O}_{X}(Y) \reqv
    \what{\sh{O}}_X(Y)$ and $R\Gamma(Y, \what{\sh{O}}_X) = \Gamma(Y, \sh{O}_{X})
    = R$.
\end{theorem}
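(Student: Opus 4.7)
The plan is to prove this in three stages: reduction to the mod-$p^n$ integral setting, an almost-acyclicity statement for the integral structure sheaf on perfectoids, and passage to the limit. Since by definition $\what{\sh{O}}_X^+ = \lim_n \nu^{-1}\sh{O}_X^+/p^n$ in the quasi-pro-\'etale site, and because the $\qproet$-topos is replete (so limits and $\RR\lim$ agree on sheaves of abelian groups here), I would first identify $\RR\Gamma(Y, \what{\sh{O}}_X^+) \eqv \RR\lim_n \RR\Gamma(Y, \sh{O}_X^+/p^n)$. The problem is thereby reduced to computing each $\RR\Gamma(Y, \sh{O}_X^+/p^n)$ and controlling the limit.

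The heart of the argument is showing that $\RR\Gamma(Y, \sh{O}_X^+/p^n) = R^+/p^n$ up to almost mathematics (with respect to $(p^{1/p^\infty})$). I would first do this for $n = 1$ by tilting: for an affinoid perfectoid $Y = \Spa(R, R^+)$, the tilting equivalence identifies $\sh{O}^+/p$ on $Y_{\qproet}$ with $\sh{O}^{\flat +}/t$ on $Y^\flat_{\qproet}$ for a suitable pseudo-uniformizer $t$, and in characteristic $p$ the Frobenius gives an isomorphism that together with a standard colimit/\v{C}ech computation yields almost-vanishing of higher cohomology and the correct $\HH^0$. The passage to general $n$ then follows by induction on $n$ using the short exact sequences $0 \to \sh{O}_X^+/p \to \sh{O}_X^+/p^{n+1} \to \sh{O}_X^+/p^n \to 0$ and the five lemma in the almost category.

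Stage three assembles everything. Taking $\RR\lim_n$, the transition maps $R^+/p^{n+1} \surj R^+/p^n$ are surjective, so no $\RR^1 \lim$ correction appears in degree zero, and the almost-vanishing of higher cohomology is preserved under $\RR\lim$ (again via repleteness). One concludes $\RR\Gamma(Y, \what{\sh{O}}_X^+) = (R^+)^a$ up to almost isomorphism. Finally, inverting $p$ kills the almost-ambiguity (since the annihilators lie in $(p^{1/p^\infty})$, they become zero after inverting $p$), producing $\RR\Gamma(Y, \what{\sh{O}}_X) = R^+[1/p] = R$. The first claim $\sh{O}_X(Y) \reqv \what{\sh{O}}_X(Y)$ follows from the $\HH^0$ computation combined with the presentation of $R$ as the $p$-adic completion of a filtered colimit of the \'etale rings $R_i$ with $Y = \lim \Spa(R_i, R_i^+)$, using that the colimit is already almost $p$-adically complete.

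The main obstacle is Stage two, the almost-acyclicity modulo $p$. This is essentially the core technical input of perfectoid theory and depends crucially on tilting together with the almost purity theorem; one must carefully navigate between the $\qproet$-site of $Y$ and of $Y^\flat$, and organize the \v{C}ech-style argument that exploits Frobenius. The remaining stages are largely formal once this is in hand.
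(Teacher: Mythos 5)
The paper does not actually prove this statement: it is quoted verbatim from \cite[Prop.~8.5\,(iii)]{scholze2017diamonds} and used as a black box, so there is no in-paper argument to compare yours against. Your three-stage outline is the standard strategy (it is essentially the proof of \cite[Lemma~4.10, Lemma~5.10]{scholze2013adic} transported to this setting), and stages one and three are formally sound: repleteness of the quasi-pro-\'etale topos identifies $\what{\sh{O}}_X^+$ with $\RR\lim_n \sh{O}_X^+/p^n$, the surjectivity of the transition maps kills the $\RR^1\lim$ in degree zero, almost-zero modules for $(p^{1/p^\infty})$ are $p$-torsion and hence vanish after inverting $p$, and $\RR\Gamma(Y,-)$ commutes with the filtered colimit defining $[1/p]$ because $Y$ is qcqs.

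The place where your sketch genuinely underestimates the difficulty is stage two. The site in question is the quasi-pro-\'etale site in the sense of \cite{scholze2017diamonds}, whose coverings are $\vv$-covers (see Definition \ref{def::v_covers} and the list of sites in the appendix), not merely inverse limits of \'etale covers. The mechanism you describe --- tilting to characteristic $p$, Frobenius, and ``a standard colimit/\v{C}ech computation'' --- is exactly what proves almost-acyclicity of $\sh{O}^+/p$ for covers of the form $\lim_i Y_i \to Y$ with $Y_i \to Y$ \'etale (via almost purity), i.e.\ for the flattened pro-\'etale site of the 2013 paper. It does not by itself treat an arbitrary $\vv$-cover of an affinoid perfectoid by affinoid perfectoids, which is what the sheaf condition and \v{C}ech-to-derived comparison on $X_\qproet$ require. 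The missing input is the $\vv$-descent theorem for $\sh{O}^{\flat+}$ on affinoid perfectoids (\cite[Thm.~8.7, Prop.~8.8]{scholze2017diamonds}), whose proof proceeds by reducing to strictly totally disconnected spaces and ultimately to products of geometric points, a genuinely different and harder argument than the Frobenius--\v{C}ech one. With that result substituted for your stage two, the rest of your assembly goes through as written.
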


In particular, if $X$ is a rigid-analytic variety, which from now on we always
see as a diamond, and $Y \to X$ is an affinoid perfectoid quasi-pro-\'etale
cover of $X$, then we can compute the cohomology $R\Gamma(X, \what{\sh{O}}_X)$
as the \v{C}ech nerve of this cover.

On the other hand, if $X$ is proper, then the cohomology of $\what{\sh{O}}_X$
actually captures the \'etale cohomology of $X$.  First,
note that We can consider $C$ as a sheaf of rings on the $\qproet$-site of $X$
by considering \emph{continuous} maps into it; that is, we consider $C$ as the
sheaf 
\[
    \underline{C} \colon X_\qproet^\op \to \Rings, \quad
    Y \mapsto \Hom(|Y|, C),
\]
where the morphisms are taken in the category of topological spaces.  When it
is clear from context, we will denote the sheaf $\underline{C}$ by simply $C$.
We note that by repleteness of the quasi-pro-étale topos, we have that
$\underline{\sh{O}_C} = \RR\lim_n \underline{\sh{O}_C} / p^n$
\autocite[Prop.~3.1.10]{proet}, and hence
$\RR\Gamma(X,C)$ computes the \'etale cohomology of $X$ with $C$-coefficients.

\begin{theorem}
    [The generic version of the primitive comparision theorem]
    \label{thm::primitive_comparison}
    Let $X$ be a proper rigid analytic variety over $C$.  Then the natural map
    $C \to \what{\sh{O}}_X$ induces an equivalence of $\bb{E}_\infty$-algebras
    \begin{equation*}
        R\Gamma(X, C) \reqv R\Gamma(X, \what{\sh{O}}_X),
    \end{equation*}
    where we are taking global sections in $\qproet$ site
    \footnote{
        Here we can also use the $\vv$-site using an analogous structure sheaf
        $\check{\sh{O}}_X$ (see \cite[Def.~2.1]{mannwerner2020local}).  The
        relevance of this remark is that the proof technically involves
        $\vv$-cohomology, but this is inessential
        \cite[Lemma~2.9]{mannwerner2020local}.
    }.
\end{theorem}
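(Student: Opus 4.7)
The plan is to reduce to Scholze's classical primitive comparison theorem for $\mathbb{F}_p$-coefficients, using the repleteness of the quasi-pro-étale topos to pass from the integral $\bmod\ p$ statement to the desired $p$-inverted one.

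First I would unwrap the definitions: $\underline{C} = \underline{\sh{O}_C}[1/p]$ and $\what{\sh{O}}_X = \what{\sh{O}}_X^+[1/p]$, where $\underline{\sh{O}_C} = \lim_n \underline{\sh{O}_C/p^n}$ and $\what{\sh{O}}_X^+ = \lim_n \nu^{-1}\sh{O}_X^+/p^n$. Because $X_\qproet$ is replete (\cite[Prop.~3.2.3]{proet}), these limits are derived limits, and $\RR\Gamma$ commutes with $\RR\lim$. Since inverting $p$ is exact, the statement reduces to proving that the natural map
\[
    \RR\Gamma(X,\underline{\sh{O}_C/p^n}) \to \RR\Gamma(X,\what{\sh{O}}_X^+/p^n)
\]
is an almost isomorphism for each $n \geq 1$; the almost ambiguity is absorbed upon inverting $p$, and taking $\RR\lim_n$ then gives the integral statement before inverting $p$.

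Next, by induction on $n$ using the short exact sequences
\[
    0 \to \underline{\sh{O}_C/p^{n-1}} \xrightarrow{\cdot p} \underline{\sh{O}_C/p^n} \to \underline{\sh{O}_C/p} \to 0
\]
(and the analogous one for $\what{\sh{O}}_X^+/p^\bullet$, which is exact because $\what{\sh{O}}_X^+$ is $p$-torsion-free on affinoid perfectoids), the problem reduces to the case $n=1$. There the left-hand side computes $\RR\Gamma_\et(X,\mathbb{F}_p) \otimes_{\mathbb{F}_p} \sh{O}_C/p$ (pro-étale cohomology of the constant sheaf agreeing with étale cohomology), while on the right, the almost identification $\RR\nu_*(\what{\sh{O}}_X^+/p) \simeq^{ae} \sh{O}_X^+/p$ on affinoid perfectoid basic neighbourhoods (itself a consequence of the acyclicity of Theorem~\ref{thm::perfectoid_acyclicity} modulo $p$) allows us to rewrite the target as $\RR\Gamma_\et(X,\sh{O}_X^+/p)$. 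We are then left with
\[
    \RR\Gamma_\et(X,\mathbb{F}_p) \otimes_{\mathbb{F}_p}^\LL \sh{O}_C/p \to^{ae} \RR\Gamma_\et(X,\sh{O}_X^+/p),
\]
which is exactly the derived form of Scholze's primitive comparison theorem \cite[Thm.~5.1]{scholze2013adic}. The $\bb{E}_\infty$-algebra compatibility is then automatic: both sides are derived global sections of commutative ring sheaves and the comparison map is a morphism of such sheaves of rings.

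The main technical input is Scholze's theorem itself, whose proof relies on Faltings' almost purity, the tilting equivalence, and an Artin--Schreier argument; we use it as a black box. The only non-formal step in the reduction above is verifying that $\RR\nu_*$ commutes appropriately with the truncation modulo $p$ and with the limits, which is where repleteness of $X_\qproet$ plays the essential role.
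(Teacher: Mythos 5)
Your proposal is correct and follows essentially the same route as the paper: both reduce via repleteness of $X_\qproet$ and commutation of $\RR\Gamma$ with $\RR\lim$ to the mod-$p^n$ almost isomorphism, handle $n>1$ by induction on the evident short exact sequences, and black-box the mod-$p$ primitive comparison theorem (the paper cites the version in Mann's thesis, you cite Scholze's original, which the paper itself notes is the source). The only cosmetic difference is that you unwind the mod-$p$ input one step further into the form $\RR\Gamma_\et(X,\bb{F}_p)\otimes^\LL \sh{O}_C/p \to \RR\Gamma_\et(X,\sh{O}_X^+/p)$, which is the same statement.
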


\begin{proof}
    [Proof]
    The primitive comparison theorem \cite[Cor.~3.9.24]{mann2022thesis}, 
    says that
    \[
        \RR\Gamma(X, \sh{O}_{C}/p) \xrightarrow[a]{\sim} 
        \RR\Gamma(X, \what{\sh{O}}_X^+/p)
    \]
    is an almost quasi-isomorphism.
    The result now follows from a ``almost derived Nakayama'' argument:
    from the exact sequences
    $0 \to p^n \sh{O}_{C}/p^{n+1} \to \sh{O}_{C}/p^{n+1} 
    \to \sh{O}_{C}/p^{n+1} \to 0$
    and 
    $0 \to p^n \what{\sh{O}}_X^+/p^{n+1} \to \what{\sh{O}}_X^+/p^{n+1} 
        \to \what{\sh{O}}_X^+/p^n \to 0$
    we get by induction
    \[
        \RR\Gamma(X, \sh{O}_{C}/p^n) \xrightarrow[a]{\sim} 
        \RR\Gamma(X, \what{\sh{O}}_X^+/p^n),
    \]
    so now the result follows by taking $\RR\lim$,
    since $\RR\Gamma$ commutes with it and
    by the fact that $X_\qproet$ is replete we have
    $\sh{O}_{C} = \RR\lim_n \sh{O}_{C}/p^n$ and
    $\what{\sh{O}}_X^+ = \RR\lim_n \what{\sh{O}}_X^+/p^n$.
\end{proof}

\begin{remark}
    This proof, as stated, follows from the development 
    of a $6$-functor formalism of $p$-torsion sheaves on 
    diamonds.
    The original statement is due to Scholze on 
    \cite[Thm.~1.3]{scholze2013adic},
    using the flattened pro-\'etale topology.
\end{remark}

\printbibliography

\end{document}